\documentclass[11pt,reqno]{amsart}
\usepackage{amssymb,mathrsfs,color,mathtools,empheq, verbatim, epstopdf}
\usepackage{pinlabel}
\mathtoolsset{showonlyrefs}
\usepackage{hyperref} 
\usepackage{enumerate, bbm}
\usepackage{tensor}
\usepackage{graphicx}
\usepackage{xcolor} % A package to add color.
\usepackage{slashed}
\usepackage{cite}
\usepackage[shortlabels]{enumitem}
\usepackage{geometry}
\numberwithin{equation}{section}

\newtheorem{thm}{Theorem}[section]
\newtheorem{cor}[thm]{Corollary}
\newtheorem{lem}[thm]{Lemma}

\theoremstyle{definition} 
\newtheorem{rem}[thm]{Remark}
\newtheorem{defn}[thm]{Definition}
\theoremstyle{remark}

\def\bR {\mathbb{R}}
\def\bS {\mathbb{S}}

\def\bZ {\mathbb{Z}}

\def\cL {\mathcal{L}}

\def\cZ {\mathcal{Z}}

\newcommand{\la}{\langle}
\newcommand{\ra}{\rangle}
\newcommand{\La}{\big\langle}
\newcommand{\Ra}{\big\rangle}

\newcommand{\tx}[1]{\mathrm{#1}}

\newcommand{\wt}[1]{\widetilde{#1}}

\newcommand{\dist}{\operatorname{dist}}

\newcommand{\vd}{\mathrm{d}}
\newcommand{\udr}{\,r\vd r}
\newcommand{\vD}{\mathrm{D}}

\newcommand{\uln}[1]{{\underline{ #1 }}}
\newcommand{\lin}{_{\textsc{l}}}

\definecolor{deepgreen}{cmyk}{1,0,1,0.5}

\newcommand{\A}{\mathcal{A}}

\newcommand{\E}{\mathcal{E}}

\newcommand{\LL}{\mathcal{L}}

\newcommand{\cS}{\mathcal{S}}

\newcommand{\N}{\mathbb{N}}
\newcommand{\R}{\mathbb{R}}

\newcommand{\Z}{\mathbb{Z}}

\newcommand{\al}{\alpha}
\newcommand{\be}{\beta}

\newcommand{\de}{\delta}

\newcommand{\om}{\omega}
\newcommand{\lam}{\lambda}
\newcommand{\te}{\theta}

\newcommand{\ta}{\tau}

\newcommand{\De}{\Delta}

\newcommand{\Lam}{\Lambda}

\newcommand{\p}{\partial}
\newcommand{\na}{\nabla}

\makeatletter

\newcommand{\Rmnum}[1]{\expandafter\@slowromancap\romannumeral #1@}
\makeatother

\newcommand{\ti}{\widetilde}

\newcommand{\U}{\underline}

\newcommand{\ang}[1]{\left\langle{#1}\right\rangle}
\newcommand{\abs}[1]{\left\lvert{#1}\right\rvert}

\newcommand{\EQ}[1]{\begin{equation}\begin{split} #1 \end{split}\end{equation}}

\newcommand{\Del}[1]{}

\newcommand{\mfor}{{\ \ \text{for} \ \ }}
\newcommand{\mas}{{\ \ \text{as} \ \ }}

\newcommand{\uD}{\operatorname{D}}

\definecolor{green}{rgb}{0,0.8,0}

\newcommand{\ud}{\mathrm{d}}

\newcommand{\eps}{\epsilon}

\newcommand{\bfd}{{\bf d}}

\newcommand{\bfp}{{\bf p}}
\newcommand{\bfq}{{\bf q}}

\newcommand{\bbS}{\mathbb S}

\newcommand{\calE}{\mathcal E}

\newcommand{\calL}{\mathcal L}

\newcommand{\calQ}{\mathcal Q}

\newcommand{\calS}{\mathcal S}

\newcommand{\calZ}{\mathcal Z}

\newcommand{\ULam}{\U{\Lam}}

\allowdisplaybreaks[4]

\begin{document}
	\parindent=0pt
	\title[ ]{Nonexistence of blow-up solutions with smooth radiation for energy-critical equivariant wave maps}
	\author[J. Jendrej]{Jacek Jendrej}
\address{Institut de Math\'{e}matiques de Jussieu, Sorbonne Universit\'{e}, Universit\'{e} Paris Cit\'{e},
4 place Jussieu, 75005 Paris, France.
%,\ \& Faculty of Applied Mathematics, AGH University of Krak\'ow, al. Adama Mickiewicza 30, 30-059 Krak\'ow, Poland.
}
\email{jendrej@imj-prg.fr}
\author[Y. Yin]{Yuchen Yin}
\address{School of Mathematical Sciences,
University of Science and Technology of China, Hefei 230026, Anhui, China.}
\email{yuchenyin@mail.ustc.edu.cn}
 \author[L. Zhao]{Lifeng Zhao}
\address{School of Mathematical Sciences,
University of Science and Technology of China, Hefei 230026, Anhui, China.}
\email{zhaolf@ustc.edu.cn}	
\begin{abstract}
   We study $k$-equivariant energy critical wave maps $\bR^{1+2} \to \bS^2$, for any equivariance degree $k\ge 2$.
   We prove that the radiation associated with any finite-energy blow-up solution cannot satisfy a certain regularity condition; in particular, it cannot be smooth.
   The assumption $k \geq 2$ is necessary, since for $k = 1$ such solutions are known to exist. The starting point of our analysis is the soliton resolution theorem. The key ingredient is a~novel application of the modulation method, in which we compare the effects of the radiation and inner bubbles to study the dynamic behavior of the widest bubble.

\end{abstract}
\maketitle
\section{Introduction}\label{Sec:1}

\subsection{Setting of the problem and main results}

Consider the Cauchy problem for a wave map $\Psi$ from $\R^{1+2}$ with Minkowski metric to the standard $2-$sphere $\bbS^2\subset \R^3$:
\begin{align}\label{WM-general}
    \p_{tt}\Psi-\De\Psi=(|\na \Psi|^2-|\p_t \Psi|^2)\Psi,\ \text{in}\ [0,\infty)\times\R^2,
\end{align}
with the initial data $(\Psi(0), \partial_t \Psi(0)) = (\Psi_0, \Psi_1)$.

For fixed $k\in \{1,2,\dots\}$, under $k-$equivariant symmetry, the solution $\vec \Psi$ takes the form
\begin{align}
    \Psi(t,re^{i\te})=(\sin u(t,r)\cos k\te,\sin u(t,r)\sin k\te, \cos u(t,r) )\in \bbS^2\subset\R^3.
\end{align}

The equation for $\vec u=(u,\p_t u)$ can be rewritten as follows:
\begin{align}
    \label{WM}
    \p_{tt}u(t,r)-\De u(t,r)+\frac{k^2}{r^2}\frac{\sin 2u(t,r)}{2}=0, \ (t,r)\in \R\times (0,\infty).
\end{align}

In this paper, we consider the case $k\geq 2$.

The energy is given by
\begin{align}
    E(u(t),\p_t u(t)):=2\pi\int_{0}^{\infty}\frac{1}{2}\left((\p_t u(t,r))^2+(\p_r u(t,r))^2+k^2\frac{\sin^2u(t,r)}{r^2}\right)r\ud r.
\end{align}

For $0\le r_1<r_2\le \infty$, we denote the localized nonlinear energy by
\begin{align}
E(\vec u(t);r_1,r_2)
:=
2\pi\int_{r_1}^{r_2}
\frac12
\left(
\p_tu(t,r)^2
+
(\partial_r u(t,r))^2
+
k^2\frac{\sin^2 u(t,r)}{r^2}
\right)r\,dr.
\end{align}
When $r_2=\infty$, we write
\[
E(\vec u;r_1):=E(\vec u;r_1,\infty).
\]

We will often write pairs of functions $\vec v=(v,\dot v)$, noting that the notation $\dot v$ will not, in general, refer to a time derivative of $v$ but rather just to the second component of $\vec v$.

 With this notation, the Cauchy problem for~\eqref{WM} can be rephrased as the Hamiltonian system 
\begin{equation} \label{eq:u-ham} 
\partial_t \vec u(t) =  J \circ \vD E( \vec u(t)),\qquad \vec u(T_0) = \vec u_0,
\end{equation} 
where 
\begin{equation} \label{eq:DE}
J = \begin{pmatrix} 0 &1 \\ -1 &0\end{pmatrix}, \quad \vD E( \vec u(t))  =  \begin{pmatrix}- \De u(t)  + k^2r^{-2}2^{-1}\sin(2u(t)) \\ \partial_t u(t) \end{pmatrix}.
\end{equation}
We define
\begin{align}
    \vD E_{\bfp}(u)=- \De u  + k^2r^{-2}2^{-1}\sin(2u).
\end{align}

The set of finite energy data is split into disjoint sectors, $\E_{\ell, m}$, which, for $\ell, m\in \bZ$, are defined by 
\begin{align}
\calE_{\ell, m}:= \big\{ (u_0, \dot u_0) \mid E(u_0, \dot u_0) < \infty, \quad \lim_{r \to 0} u_0(r) = \ell\pi, \quad \lim_{r \to \infty} u_0(r) = m \pi \big\}.
\end{align}

The sets $\calE_{\ell, m}$ are affine spaces, parallel to the linear space $\E := \E_{0, 0} = H \times L^2$, which we endow with the norm
\begin{align}
\| \vec u_0 \|_{\E}^2:=   \| \dot u_0 \|_{L^2}^2+ \| u_0\|_H^2  := \int_0^\infty \Big((\dot u_0(r))^2 + ( \p_r u_0(r))^2 + k^2 \frac{ (u_0(r))^2}{r^2} \Big) \, r \ud r.
\end{align}

The local energy norm is denoted by $\E(r_1,r_2)$. We adopt similar conventions as for $E$ regarding the omission of $r_2$, or both $r_1$ and $r_2$.

The stationary solutions to ~\eqref{WM} are described in terms of the $k$-equivariant harmonic map
\begin{align}
Q(r)=2\arctan(r^k).
\end{align}
More precisely, every finite energy stationary solution to~\eqref{WM}  takes the form $Q_{\mu, \sigma, m}(r) =  m \pi + \sigma Q(\frac{r}{\mu})$ for some  $\mu \in (0, \infty), \sigma \in \{0, -1, 1\}$ and $m \in \Z$. 
We write \(\vec Q:=(Q,0)\), and for \(\lambda>0\), $\vec Q_\lambda(r) := (Q_\lambda(r), 0)  := (Q(\lambda^{-1}r),0)$. These pairs are minimizers of the energy $E$  within the class $\E_{0, 1}$;  in fact, $E(\vec  Q_\lambda ) = 4  \pi k$. We also denote $\vec\pi := (\pi, 0)$.

\bigskip

The theory of wave maps has been extensively studied. The subcritical local Cauchy theory, based on bilinear and null-form estimates, was developed in the works of Klainerman and Machedon \cite{KM-1}, \cite{KM-2}, \cite{KM-3}, Klainerman and Selberg \cite{Kla-S-1}, \cite{Kla-S-2}, and Selberg \cite{Selberg}. In particular, these results yield local well-posedness for $\mathbb S^2$-valued wave maps \eqref{WM-general} in $H^s\times H^{s-1}$, $s>1$. The small-data critical theory, including the sphere target as a special case, was developed by Tao \cite{Tao-II} and Tataru \cite{Tataru01}, \cite{Tataru05}. For general compact targets, the large-data critical regularity theory was developed by Sterbenz and Tataru \cite{Ste-Ta}, \cite{Ste-Ta-2}. We also mention the large-energy theory for wave maps into hyperbolic targets developed by Tao \cite{Tao-III,Tao-IV,Tao-V,Tao-VI,Tao-VII} and by Krieger and Schlag \cite{KS-book}.

\bigskip

We now turn to the $k$-equivariant reduction \eqref{WM}. The study of equivariant wave maps was initiated by Shatah and Tahvildar-Zadeh \cite{Sha-Tah-1, Sha-Tah-2}.
For a solution blowing up at finite time $T_+$, we are concerned with its asymptotic description as $t \to T_+$.
For finite-energy equivariant solutions, blow-up can only occur at $r = 0$, and
it is a direct consequence of the finite speed of propagation
that $\vec u(t,r)$ converges strongly in $\E(R)$ for any $R>0$ to some $\vec u_0^*$ satisfying $E(\vec u_0^*) < \infty$.
We denote $\vec u^*(t)$ the solution of \eqref{WM}
such that $\vec u^*(T_+) = \vec u_0^*$.
We shall, by slight abuse of terminology, refer to both the initial data $\vec u_0^*:=\vec u^*(T_+)$ and the solution $\vec u^*(t)$ as the \emph{radiation}. We will specify which one is meant whenever ambiguity may arise.
Note that $\vec u$ and $\vec u^*$ coincide outside of the light cone with tip at $(T_+, 0)$.
In this sense, the radiation $\vec u^*$ captures the part of the solution which remains after blow up.

Once the radiation  $\vec u^*$ has been identified, the remaining part $\vec u(t)-\vec u^*(t)$ contains all the concentration responsible for the singularity. Soliton resolution gives a refined description of this concentrating component in terms of a superposition of a finite number of rescaled harmonic maps.
The first step towards soliton resolution was accomplished by Shatah and Tahvildar-Zadeh \cite{Sha-Tah-1, Sha-Tah-2}, who proved the non-concentration of energy at the self-similar scale; more precisely, for any $\alpha\in(0,1)$,
\begin{align}
\label{eq:rad-rho}
    E(\vec u(t);\alpha(T_+-t),(T_+-t))\to 0
    \quad \text{as } t\to T_+ .
\end{align}
Local convergence to a rescaled harmonic map inside the light cone emanating from the singularity was proved by Struwe \cite{Struwe-WM-03}. The proof of the full soliton resolution for ~\eqref{WM} has been established through several works; see \cite{Cote-WM-sequential} and \cite{Jia-Kenig-sequential} for sequential results, and \cite{DKMM} and \cite{JL-WM} for the proof in continuous time. See also \cite{Ary-NLH-SRC-Rad}, \cite{JL-HMHF-SRC},\cite{JL-NLW}, \cite{KK-CSS}, \cite{KK-CMS} for the proofs of soliton resolution in other models. For constructions of solutions to~\eqref{WM} exhibiting various types of non-trivial dynamical behavior, we refer to  \cite{GK-WM-KST}, \cite{hwang2026constructioninfinitetimebubble},  \cite{J-WM-two-bubble-construction}, \cite{jendrej2025concentricbubblesconcentratingfinite},  \cite{JLR-WM}, \cite{Jeong}, \cite{KST-WM}, \cite{RR-WM}, \cite{RS-sigma-model}.

%\Blue{We note that the radiation $\vec u^* $ can be determined without invoking soliton resolution.}
\bigskip

A natural problem is to ask how much freedom the radiation actually has. More precisely, one may ask whether an arbitrary element of the energy space can occur as the radiation of a nonlinear solution, or whether the nonlinear
dynamics impose additional restrictions on the possible radiation. Our result shows that sufficiently regular radiation is
incompatible with blow-up. In this sense, the theorem below can be viewed as a
rigidity statement for the radiation.
We prove that if a finite energy blow-up solution to ~\eqref{WM} exists, then the
corresponding radiation cannot be too regular. More precisely, we prove the following result. 

\begin{thm}\label{thm:main}
    For $k\geq 2$, there exists no finite energy solution  $ \vec u$  to \eqref{WM} blowing up at finite time $T_+<\infty$ with radiation $\vec u^*(t,r)$ satisfying 
    \begin{align}\label{eq:radiation-regularity-1}
        \Psi_0(re^{i\te})-(0,0,(-1)^{m})\in H^{3}(\R^2;\R^3),
    \end{align}
    \begin{align}\label{eq:radiation-regularity-2}
        \Psi_1(re^{i\te})\in H^{2}(\R^2;\R^3).
    \end{align}

    Here
    \begin{align}\label{eq:radiation-Psi}
         \Psi_0(re^{i\te})&:=(\sin u^*_0(r)\cos k\te,\sin u^*_0(r)\sin k\te,\cos u^*_0(r)),\\
         \Psi_1(re^{i\te})&:=(u_1^*(r)\cos u^*_0(r)\cos k\te,u_1^*(r)\cos u^*_0(r)\sin k\te,-u_1^*(r)\sin u^*_0(r))
    \end{align}
    with  $\vec u_0^*(r)=(u_0^*(r),u_1^*(r))=(u^*(T_+,r),\p_tu^*(T_+,r))$. The integer $m=\lim_{r\to \infty} \frac{u_0^*(r)}{\pi}$.
\end{thm}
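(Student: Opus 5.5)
We argue by contradiction. Let $\vec u$ blow up at $T_+$ with radiation $\vec u^*$ satisfying \eqref{eq:radiation-regularity-1}--\eqref{eq:radiation-regularity-2}. By the continuous-time soliton resolution theorem \cite{DKMM, JL-WM}, there are $N\ge1$, signs $\iota_j\in\{\pm1\}$, an integer $m$ and scales $0<\lambda_1(t)\ll\dots\ll\lambda_N(t)\ll T_+-t$ such that
\begin{align}
\vec u(t)-\vec u^*(t)-\sum_{j=1}^N\iota_j(\vec Q_{\lambda_j(t)}-\vec\pi)\to 0 \quad\text{in }\E \text{ as } t\to T_+ .
\end{align}
We will show that the widest bubble $\lambda_N$ cannot collapse; this gives the contradiction.

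The first step is to turn the hypothesis on $\Psi_0,\Psi_1$ into pointwise information on $\vec u^*$ near $r=0$. For $k$-equivariant maps, $H^3\times H^2$ regularity of the $\mathbb{R}^3$-valued map, combined with the angular factor $e^{ik\theta}$, forces vanishing of the low-order Taylor coefficients at the origin. We expect $u_0^*(r)-\ell\pi=O(r^{k})$ near $0$, with $u_0^*=O(r^2)$ at least once $k\ge2$, and $u_1^*(r)=O(r^{\min(k,1)})$. Propagation of regularity for the smooth local solution $u^*$ on $[T_+-\delta,T_+]$ then keeps these bounds uniform in $t$. The point is that the radiation is flat enough at the origin that, after projection on the kernel element $\Lambda Q_{\lambda}$, it produces only a weak forcing, of size roughly $\lambda^{2}$ or better. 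Note $\Lambda Q\in L^2$ only for $k\ge2$, and this is where the assumption $k\ge 2$ enters.

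The second step is to set up modulation for the top bubble. We decompose $\vec u=\vec u^*+\sum_j\iota_j(\vec Q_{\lambda_j}-\vec\pi)+\vec g$, with an orthogonality condition on $g$ against a localized $\Lambda Q_{\lambda_N}$. We then derive the standard modulation system for $\lambda_N$ using a refined modulation functional $b_N(t)=-\langle \partial_t u,\Lambda Q_{\lambda_N}\rangle$, suitably localized at scale $\lambda_N$ times a large constant. Its derivative has three kinds of contributions:
\begin{enumerate}[(i)]
\item the interaction with the radiation, of size roughly $\lambda_N^{-1}\langle \mathrm{D}E_{\mathbf p}\text{-coupling}(u^*,Q_{\lambda_N}),\Lambda Q\rangle$, which by the first step is $O(\lambda_N^{1+\epsilon})$ after normalization;
\item the interaction with the inner bubble $\lambda_{N-1}$, of size $\sim\iota_N\iota_{N-1}(\lambda_{N-1}/\lambda_N)^{k}/\lambda_N$ with a definite sign;
\item quadratic errors in $\vec g$, controlled by energy and localized virial estimates.
\end{enumerate}
The inner bubbles, if present, act through (ii) with a sign that pushes $\lambda_N$ outward, or else the configuration is not consistent with collapse (compare the two-bubble analysis). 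The radiation term is too small, thanks to the regularity, to drive $\lambda_N\to0$. We then integrate $\lambda_N'\approx b_N$ and $b_N'\approx$ (ii)+(i) backward from $T_+$. The goal is to show that $\lambda_N(t)\to0$ is incompatible with $\int_t^{T_+}|b_N|$ and the sign structure. If $N=1$, the forcing is only the radiation term, so $|b_N'|\lesssim \lambda_N^{\epsilon}$, which is integrable in time, and $b_N(T_+)=0$. Then $\lambda_N$ would have to decay no faster than $(T_+-t)^{1+}$-type ODE solutions allow, contradicting $\lambda_N\to 0$ via a bootstrap on $\lambda_N(t)\lesssim (T_+-t)^{\nu}$.

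The main obstacle is making the comparison in the second step rigorous. The radiation forcing and the inner-bubble forcing must be shown not to cancel and not to conspire with the error $\vec g$, which soliton resolution only controls in $o(1)$ energy. The plan is to use a localized virial/energy identity on the region $r\sim\lambda_N$ to upgrade $\|\vec g\|$ to an integrated bound $\int \|\vec g\|^2_{\text{loc}}\,dt/\lambda_N<\infty$, as in the Jendrej--Lawrie collision-interval technique. The sharp decay of $u^*$ near $0$ from the first step then makes the radiation term integrable, so that only the sign-definite bubble interaction remains, and this interaction cannot produce collapse of the widest bubble.
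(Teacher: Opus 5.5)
Your first step and the overall contradiction strategy match the paper's, but the second step has a genuine gap. Nothing in it gives quantitative control of $\|\vec g(t)\|_{\mathcal E}$, and without that the modulation system for $\lambda_N$ does not close.

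\textbf{Missing control of $\vec g$.} Set $\mathbf d=\sum_{j<N}(\lambda_j/\lambda_{j+1})^{k/2}+\lambda_N+\|\vec g\|_{\mathcal E}$. Modulation alone only gives:
\begin{enumerate}[(a)]
\item $|\lambda_N'|\lesssim\|\dot g\|_{L^2}\le\mathbf d$;
\item $\lambda_N'\approx b_N$ only up to an error $O(c_0\mathbf d)$;
\item errors of size $c_0\mathbf d^2/\lambda_N$ in $b_N'$.
\end{enumerate}
These errors are as large as the quantities you want to integrate. In your $N=1$ argument, the bound $|b_N'|\lesssim\lambda_N^{\epsilon}$ drops the term $\|\vec g\|_{\mathcal E}^2/\lambda_N$. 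Even granting it, a sublinear forcing is compatible with collapse: $\lambda=(T_+-t)^{2/(1-\epsilon)}$ satisfies $\lambda''\sim\lambda^{\epsilon}$. Likewise a bound $\lambda_N\lesssim(T_+-t)^{\nu}$ contradicts nothing.

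What is missing is energy conservation combined with coercivity of $\mathrm{D}^2E$ at $\sum_j\iota_jQ_{\lambda_j}+u^*$ under the orthogonality conditions. This gives
\[
\|\vec g\|_{\mathcal E}^2\lesssim\sum_{j\in\mathcal A}\Big(\frac{\lambda_j}{\lambda_{j+1}}\Big)^k-\sum_{j\notin\mathcal A}\Big(\frac{\lambda_j}{\lambda_{j+1}}\Big)^k+\lambda_N^2+|F(t)|,\qquad F(t)=\langle \mathrm{D}E(\vec u^*(t)),\vec g(t)\rangle ,
\]
where $\mathcal A$ is the set of sign changes $\iota_j\neq\iota_{j+1}$.
\begin{enumerate}[(a)]
\item The term $\lambda_N^2$ is the bubble--radiation energy interaction. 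This is where the vanishing of $u^*$ at $r=0$ is really used.
\item $F$ is a first-order term that is not perturbatively small. The paper handles it via $F(T_+)=0$ and $|F'|\lesssim\mathbf d^2+\lambda_N^2(1+|\log\lambda_N|)$, so $\int_t^{T_+}|F|$ gains a factor $T_+-t$.
\end{enumerate}
The paper deliberately writes no modulation equation for $\beta_N$. For $j=N$ the radiation term in $\lambda_N\beta_N'$ is of size $\lambda_N^2(1+|\log\lambda_N|)^{1/2}$, which is not $o(\mathbf d^2)$. So $\lambda_N$ is controlled only through $|\lambda_N'|\lesssim\mathbf d$. A localized virial bound like $\int\|\vec g\|^2_{\mathrm{loc}}\,dt/\lambda_N<\infty$ cannot replace this: it is qualitative and does not see the radiation's first-order term.

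\textbf{Inner bubbles.} Your treatment is not correct as stated. The interaction (ii) pushes $\lambda_N$ outward only when $\iota_{N-1}=\iota_N$. When $\iota_{N-1}\neq\iota_N$ the bubbles attract and (ii) pulls $\lambda_N$ inward. Your fallback, that the configuration is then not consistent with collapse, is exactly what must be proved.

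The paper never uses the sign of the force on $\lambda_N$. It uses $\psi=\sum_{j\in\mathcal A,\,j\le N-1}2^{-j}\xi_j\beta_j$, built from the \emph{inner} bubbles only. Up to $\tilde c_0\mathbf d^2$, $\psi'$ dominates the dangerous terms $\sum_{j\in\mathcal A}(\lambda_j/\lambda_{j+1})^k$ in the energy bound, while $|\psi|\lesssim\lambda_{N-1}\mathbf d\le\lambda_N\mathbf d^{1+2/k}$. Combined with the treatment of $F$, this yields
\[
\int_t^{T_+}\mathbf d^2\,ds\lesssim\lambda_N(t)\,\mathbf d(t)^{1+2/k}+(T_+-t)\int_t^{T_+}\lambda_N^2(1+|\log\lambda_N|)\,ds .
\]
The proof then splits time according to which term dominates.
\begin{enumerate}[(a)]
\item Where the first term dominates, $\int\mathbf d\,ds=o\big(\sup_{s\ge t}\lambda_N(s)\big)$, so the inner bubbles cannot move $\lambda_N$ appreciably.
\item Where the second dominates, an Osgood-type dyadic argument applies: $\frac1n\lesssim(S_n-S_{n+1})S_n^2$ with $S_n=T_+-t_n$, which sums to a contradiction.
\end{enumerate}

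\textbf{Decay of the radiation.} $H^3\times H^2$ regularity does not give $O(r^k)$ vanishing, and in general not even $O(r^2)$ when $k=2$. One gets only $|u^*|\lesssim r^2(1+|\log r|^{1/2})$ for $k=2$ and $|u^*|\lesssim r^2$ for $k\ge3$. Hence the radiation forcing is linear in $\lambda_N$ up to a logarithm, not $O(\lambda_N^{1+\epsilon})$. That is why an Osgood argument, rather than integrability, is needed, and why it is borderline ($\sum 1/n=\infty$).
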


\begin{rem}
   We make no claim that the regularity assumptions in Theorem~\ref{thm:main} are optimal. It would be interesting to determine the sharp regularity threshold.
\end{rem}

\begin{rem}
    The regularity assumption on the radiation in Theorem \ref{thm:main} is essential and is compatible with all currently known finite-time blow-up constructions for \eqref{WM}. In particular,  Rapha\"el and Rodnianski \cite{RR-WM} constructed, for any $k\geq 2$, an open set of initial data in $H^2$ that leads to blow-up. The radiation associated with these blow up solutions is no better than energy class. In addition, finite-time blow-up solutions with two concentrating bubbles \cite{jendrej2025concentricbubblesconcentratingfinite} and, more generally, with an arbitrary number of concentric concentrating bubbles \cite{krieger2026longfinitetimebubble} have been constructed in the case $k=2$. None of these examples satisfies the regularity assumption imposed in Theorem \ref{thm:main}.
\end{rem}

\begin{rem}
    The requirement of the equivariance class $k\geq 2$ is necessary. In fact, the analog of our result is false for $k=1$. Indeed, in
    \cite{JLR-WM}, the first author, Lawrie, and Rodriguez constructed
    blow-up solutions with radiation of the form  $ \vec u_0^*(r)=(q\chi(r)r^{\nu},0)$,  where $q<0$ and $\nu>\frac{9}{2}$ can be chosen arbitrarily. The solutions constructed in \cite{Jeong} also have arbitrarily smooth radiation.
    \end{rem}

    \begin{rem}
        The analog of $\vec u^*$ for solutions defined for all $t \in [0,\infty)$
        was studied in \cite{Cote-WM-sequential}, \cite{CKLS-WM-1}, \cite{CKLS-WM-2},\cite{Jia-Kenig-sequential}.
It is given by the unique solution of the linear wave equation which approximates $u$ outside of all shifted future light cones (up to an error whose energy converges to $0$).

    For global solutions, the analog of Theorem~\ref{thm:main} may fail. Indeed, in \cite{hwang2026constructioninfinitetimebubble} Hwang and Kim constructed infinite time bubble towers for ~\eqref{WM} in $k\geq 3$ without radiation. Strictly speaking, a single stationary soliton also provides a counterexample. 
\end{rem}

\begin{rem}
    Similar nonexistence results were also obtained for some parabolic equations. In \cite{KimM-NLH-HMHF-radial} Kim and Merle proved the nonexistence of finite energy blow up solutions for radial focusing energy critical heat equations in dimensions $D\geq 7$  and $k$-equivariant harmonic map heat flows with $k\geq 3$. See also \cite{kim2026rigidityresultsmultibubbledynamics} for a nonradial analog for focusing energy critical heat equations. 
\end{rem}

\subsection{Strategy of the proof}

We now give an informal outline of the proof.

\bigskip

We argue by contradiction. If such a solution exists, we reduce it to a solution $\vec u$ with small compactly supported radiation using finite speed of propagation and use soliton resolution to describe the concentrating part. Theorem~\ref{thm:src-blowup} implies that the solution can be decomposed into the form
\begin{align}\label{eq:decomposition-u-thm}
    \vec u(t) = m_\De  \vec \pi + \sum_{j =1}^N \iota_j(\vec Q_{\lam_j(t)} - \vec \pi) + \vec u^*(t) + \vec g(t) , 
\end{align}
such that 
\begin{align}
      \| \vec g(t)\|_{\E} + \sum_{j =1}^{N-1} \frac{\lam_{j}(t)}{\lam_{j+1}(t)}+\frac{\lam_N(t)}{T_+-t}  \to 0 \mas t \to T_+. 
\end{align}
Here $\vec u^*(t)$ is the radiation with initial data $\vec u_0^*$ at time $T_+$. We will study the dynamical behavior of the scale of the widest bubble, $\lam_N$.

\bigskip

There are two mechanisms governing the evolution of $\lam_N(t)$. The first one comes from the interaction with the inner bubbles, while the
second one comes from the interaction with the radiation.  Let us first describe the two mechanisms separately. If the widest bubble
is affected only by the inner bubbles,
 then the argument used in the proof of
Lemma 5.8 in ~\cite{JL-WM} would show that the cumulative effect of the inner
bubbles is too small to drive $\lam_N(t)$ to zero. More precisely, after
integration in time, this contribution is negligible compared with
$\lambda_N(t)$, which would force $\lambda_N(t)$ to remain comparable to a
positive constant, contradicting blow-up. On the other hand, if the widest bubble is affected only by the radiation, then the following energy expansion suggests the same conclusion:
\begin{align}
    &E(\vec Q)+E(\vec u^*)=
    E(\iota\vec Q_{\lam(t)}+\vec u^*(t))+\La \vD E(\iota\vec Q_{\lam(t)}+\vec u^*(t)),\vec g(t)\Ra\\
    &+\frac{1}{2}\La\vD^2E(\iota\vec Q_{\lam(t)}+\vec u^*(t) )\vec g(t),\vec g(t)\Ra+O(\|\vec g(t)\|_{\E}^3).
\end{align}

Following the idea from \cite{J-NLW-speed}, the term $\La \vD E(\iota\vec Q_{\lam(t)}+\vec u^*(t)),\vec g(t)\Ra $  can be decomposed into a small term and an almost constant term, and hence can be ignored. The term $\frac{1}{2}\La\vD^2E(\iota\vec Q_{\lam(t)}+\vec u^*(t) )\vec g(t),\vec g(t)\Ra$ is coercive under some suitable condition. Then formally we have
\begin{align}
    |\lam'|^2\lesssim\|\vec g\|_{\E}^2\lesssim |E(Q)+E(u^*)-E(\iota\vec Q_{\lam(t)}+\vec u^*(t))|\lesssim\lam^{2}
\end{align}
under our assumption on $\vec u^*$. This also implies that $\lam_N$ cannot reach zero in finite time, a contradiction.  

\bigskip

The main difficulty is that, in the actual problem, both effects are present
simultaneously. Our strategy is to split the time interval into two regimes. In
the first regime, the dynamics of $\lam_N(t)$ is dominated by the interaction
with the inner bubbles, and $\lam_N(t)$ cannot change
significantly. In the second regime, the contribution of the radiation dominates, which 
allows an Osgood-type argument to show that $\lam_N(t)$  cannot reach zero in finite time. Combining the two estimates rules out the possible
mixed dynamics and yields the desired contradiction. 

\bigskip

A key technical point is to quantify the interaction between the 
bubbles and the radiation. This is where the $k$-equivariant structure is used:
the assumed regularity of the radiation implies suitable decay estimates near
the origin, which allow us to control the radiation--bubble interaction in the
energy expansion.

\bigskip

Our paper is organized as follows. In Section \ref{Sec:2}, we recall some basic properties of wave maps and prove the required properties of radiation. In Section \ref{Sec:3}, we use the modulation method to study the dynamical behavior of the scaling parameters. In Section \ref{Sec:4}, we use energy conservation to derive the key energy estimates. We treat each term in the energy expansion carefully. In Section \ref{sec:proof}, we combine the results in these sections and derive the contradiction.

\subsection{Notation}

Given a function $u(r)$ and $\lam>0$, we define
\begin{align}
    u_{\lam}(r)=u\left(\frac{r}{\lam}\right),\quad u_{\U\lam}(r)=\frac{1}{\lam}u\left(\frac{r}{\lam}\right).
\end{align}

We denote the infinitesimal generators of these scalings by
\begin{align}
    \Lam=r\p_r,\quad \U\Lam=r\p_r+1.
\end{align}

Throughout this paper $\chi$ always denotes a smooth cutoff function  such that  $\chi|_{\{r\leq 1\}}=1$, $\chi|_{\{r\geq 2\}}=0$, $0\leq \chi\leq 1$. We define $\chi_R(r)=\chi(\frac{r}{R})$ for $R>0$.

We define
\begin{align}
    f(u)=\frac{1}{2}\sin2u.
\end{align}

\noindent{\bf Acknowledgment.}  J. Jendrej was supported by the ERC project INSOLIT (No. 101117126). L. Zhao was supported by National Natural Science Foundation of China (No. 12271497 and No. 12341102).

\section{Preliminaries}\label{Sec:2}

\subsection{Cauchy theory}

The local Cauchy theory for ~\eqref{WM-general}  in space $H^s\times H^{s-1}$ with $s>1$ has been studied in
\cite{KM-1}, \cite{KM-2}, \cite{KM-3}, \cite{Kla-S-1}, \cite{Kla-S-2}  and \cite{Selberg}. We shall use the following version, as recorded
in \cite{DJKM-WM}.
\begin{lem}[\rm{\cite[Theorem 2.1]{DJKM-WM}}]\label{lem:Cauchy-general}
    Let $s > 1$ and $\frac{1}{2} < b < \min\{s - \frac{1}{2}, 1\}$. Suppose that $(\Psi_0, \Psi_1) \in \dot{H}^s \times H^{s-1}$ and that $\Psi_0$ equals a constant $\Psi_\infty \in \bbS^2$ for large $x$. Then for $T = T(\| (\Psi_0 - \Psi_\infty, \Psi_1) \|_{H^s \times H^{s-1}}) > 0$ sufficiently small, there exists a unique solution $\vec \Psi$ to equation ~\eqref{WM-general} with initial data $(\Psi_0, \Psi_1)$ on $\R^2 \times (-T, T)$ in the sense of distributions, which satisfies the following properties
\begin{enumerate}
    \item[(1)] $(\Psi - \Psi_\infty , \p_t \Psi)\in C(I, H^s \times H^{s-1})$;
    \item[(2)] there exists $\bar{\Psi} \in L^2(\R^3)$ with $\bar{\Psi}|_{\R^2 \times I} \equiv \Psi - \Psi_\infty$ and $\nabla_{x,t} \bar{\Psi} \in X^{s-1,b}$,
\end{enumerate}
where $I = (-T, T)$.
\end{lem}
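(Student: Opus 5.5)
The statement just above is Lemma~\ref{lem:Cauchy-general}, quoted from \cite[Theorem 2.1]{DJKM-WM}. In the paper it is used as a black box, but here is how I would prove it.

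I would follow the standard Klainerman--Machedon/Klainerman--Selberg scheme in Bourgain-type spaces $X^{s,b}$ adapted to the wave operator. The first step is to rewrite \eqref{WM-general} in a form where the null structure is explicit. Using $|\Psi|=1$, one has $\Psi\cdot\p_\alpha\Psi=0$. The nonlinearity $(|\na\Psi|^2-|\p_t\Psi|^2)\Psi = -\p^\alpha\Psi\cdot\p_\alpha\Psi\,\Psi$ is the classical $Q_0$ null form multiplied by $\Psi$. I would then work with $\phi=\Psi-\Psi_\infty$ and regard the equation as
\[
\Box\phi = Q_0(\phi,\phi)(\phi+\Psi_\infty),
\]
which is a semilinear wave equation with a cubic (and quadratic, after multiplying by the constant $\Psi_\infty$) null-form nonlinearity. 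The assumption that $\Psi_0$ equals $\Psi_\infty$ for large $x$ guarantees $\phi(0)\in H^s$, not merely $\dot H^s$.

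The second step is the Picard iteration. I would set up the contraction for $\bar\phi$ in the space $\{\bar\phi:\ \bar\phi\in L^2,\ \na_{x,t}\bar\phi\in X^{s-1,b}\}$, localized by a smooth time cutoff $\eta(t/T)$. The linear estimates are standard:
\begin{enumerate}
\item[(a)] the energy estimate $\|\eta_T W(t)(f,g)\|\lesssim\|(f,g)\|_{H^s\times H^{s-1}}$;
\item[(b)] the Duhamel estimate $\|\eta_T\Box^{-1}F\|\lesssim T^{1-b'+b}\|F\|_{X^{s-1,b'-1}}$ for $b'>b$ close to $b$, where the positive power of $T$ yields smallness.
\end{enumerate}
The nonlinear estimates are the core of the argument. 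First I would prove the Klainerman--Machedon null-form estimate
\[
\|Q_0(u,v)\|_{X^{s-1,b-1}}\lesssim\|\na u\|_{X^{s-1,b}}\|\na v\|_{X^{s-1,b}},
\]
valid for $s>1$ and $\tfrac12<b<s-\tfrac12$ in two space dimensions. Second, I would prove an algebra/product estimate showing that multiplication by $\phi$ (with $\na\phi\in X^{s-1,b}$, hence $\phi\in L^\infty$ since $s>1$) preserves $X^{s-1,b-1}$. The restriction $b<\min\{s-\tfrac12,1\}$ is exactly what is needed in these bilinear estimates and in the transfer principle.

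The third step is to conclude. The contraction gives existence and uniqueness in the iteration space, with $T$ depending only on the $H^s\times H^{s-1}$ norm. Continuity in time, property (1), follows from the embedding $X^{s,b}\subset C_tH^s$ for $b>\tfrac12$. The extension $\bar\Psi$ in (2) is the cutoff iterate itself. Finally, I would check that the constraint $|\Psi|=1$ is propagated: $\rho=|\Psi|^2-1$ satisfies a linear wave equation with zero data, so $\rho\equiv0$. This shows that the solution of the extended system is indeed a wave map.

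The main obstacle is the bilinear null-form estimate in $d=2$ at low regularity. I would prove it by dyadic decomposition in frequency and in modulation $\big||\tau|-|\xi|\big|$, using the angular cancellation of the $Q_0$ symbol $\tau\sigma-\xi\cdot\eta$ between the interacting waves. For the high--high$\to$low interaction, which is the delicate case in two dimensions, I would rely on the Foschi--Klainerman-type bilinear $L^2$ bounds.
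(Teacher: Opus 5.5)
The paper gives no proof of this lemma. It imports the statement from \cite{DJKM-WM}, which rests on the Klainerman--Selberg $X^{s,b}$ theory, and your outline follows that standard route: extrinsic formulation with the $Q_0$ null form, a contraction in a time-localized $X^{s,b}$-type space, and propagation of $|\Psi|=1$.

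One step fails as written, and it has to be fixed for the large-data contraction to close. The Duhamel estimate for forcing in $X^{s-1,b'-1}$ with $b'>b$ gains $T^{b'-b}$, not $T^{1-b'+b}$; a scaling check in the modulation variable shows the latter is too strong as $T\to0$. The more serious problem is that you state both the null-form estimate and the product estimate with output in $X^{s-1,b-1}$. That space strictly contains $X^{s-1,b'-1}$, so your estimate (b) cannot be applied. With the estimates as stated, the Duhamel map gains no power of $T$, and the Lipschitz constant is only $O(A+A^2)$ in the data size $A$. You need $\epsilon$-improved versions, for instance
\[
\|Q_0(u,v)\|_{X^{s-1,b-1+\epsilon}}\lesssim\|\nabla_{x,t}u\|_{X^{s-1,b}}\|\nabla_{x,t}v\|_{X^{s-1,b}},
\]
and a matching bound for multiplication by $\phi$ into $X^{s-1,b-1+\epsilon}$. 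The latter is a genuine wave-Sobolev product law; it does not follow from $\phi\in L^\infty$. These versions are available in the stated open range of $(s,b)$, and they are the form in which the Klainerman--Selberg estimates are used.

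Finally, ``zero data'' for $\rho=|\Psi|^2-1$ uses the compatibility condition $\Psi_0\cdot\Psi_1=0$, which is implicit in the hypotheses. Moreover, $\rho$ solves $\partial_t^2\rho-\Delta\rho=2(|\nabla\Psi|^2-|\partial_t\Psi|^2)\rho$, whose potential is rough. So $\rho\equiv0$ should be obtained by running the same short-time estimates on $\rho$, which lies in the $X^{s,b}$ class by the algebra property, or by smooth approximation. It does not follow from classical linear uniqueness.
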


We recall the Cauchy theory for ~\eqref{WM} proved by Shatah and Tahvildar-Zadeh in \cite{Sha-Tah-1}, \cite{Sha-Tah-2}.
%\emph{\cite[Theorem 1.1]{Sha-Tah-2},\cite[Theorem 8.1]{Sha-Str-1}\,  \cite{Sha-Tah-1}}
\begin{lem}[Local well-posedness,  \rm{\cite[Theorem 1.1]{Sha-Tah-2},\cite[Theorem 8.1]{Sha-Str-1}, \cite{Sha-Tah-1}} ]\label{lem:lwp-1}  Let $\ell, m \in \Z$ and let $\vec u_0 \in \E_{\ell, m}$. Then, there exists a maximal time interval of existence $(T_-, T_+) = I_{\max}(\vec u_0) \ni 0$ on which~\eqref{WM} admits a unique solution $\vec u(t)$ in the space $ C^0(I_{\max}; \E_{\ell, m})$ with $\vec u(0) = \vec u_0$. 

In fact, there exists $\eps_0 >0$ with the following property. Let $\vec u_0 \in \E_{\ell, m}$, $\tau>0$ and  suppose the solution $\vec u(t)$ to~\eqref{WM} with data $\vec u(0) = \vec u_0$ is defined on the interval $[0, \tau)$, i.e., in $C^0([0, \tau); \E_{\ell, m})$. Suppose that there exists a time $0\le t< \tau$ and a number $R> \tau - t$ such that, 
\begin{align}
E( \vec u(t); 0, R) < \eps_0. 
\end{align}
Then,  $T_+( \vec u) > \tau$. 
\end{lem}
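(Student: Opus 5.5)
The plan is to treat the two assertions separately. The first, local existence and uniqueness in $C^0(I_{\max};\E_{\ell,m})$, comes from the standard reduction of \eqref{WM} to a semilinear radial wave equation in higher dimension. The second, the extension criterion, comes from a small-energy theory combined with finite speed of propagation.

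\textbf{Step 1: a pointwise bound from energy.} Set $G(u)=\int_{\ell\pi}^{u}|\sin\rho|\,\vd\rho$. By $2ab\le a^2+b^2$,
\[
|G(u(r_2))-G(u(r_1))|\le \int_{r_1}^{r_2}|\sin u|\,|\p_r u|\,\vd r\le \tfrac{1}{2\pi k}E(\vec u;r_1,r_2).
\]
Hence if $E(\vec u;0,R)$ is small, then $u$ stays uniformly close to $\ell\pi$ on $(0,R)$. Away from $r=0$ the equation is a nonsingular one-dimensional semilinear wave equation with locally Lipschitz nonlinearity, so the only issue is at the origin.

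\textbf{Step 2: local theory near the origin.} Near $r=0$, where $u\approx \ell\pi$, write $u=\ell\pi+r^{k}v$. Then $v$ solves a radial wave equation on $\R^{1+(2k+2)}$ of the form $\p_{tt}v-\De_{\R^{2k+2}}v=r^{k-2}N(r^kv)$, where $N(s)=O(s^3)$ is smooth and odd. Strichartz estimates for this equation, together with the weighted Hardy bound $\|u/r\|_{L^2(r\,\vd r)}\lesssim\|u\|_H$ (which puts $r^kv$ in the right space), give a contraction on a short time interval. Gluing this with the exterior one-dimensional theory by finite speed of propagation yields the solution in $C^0([0,T);\E_{\ell,m})$, uniqueness, and a maximal interval $(T_-,T_+)$. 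Equivalently, I would invoke Lemma~\ref{lem:Cauchy-general} for smooth data and pass to the limit in $\E$ using energy conservation and the continuity of the data-to-solution map.

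\textbf{Step 3: the criterion.} Suppose $E(\vec u(t);0,R)<\eps_0$ with $R>\tau-t$. Integrating the local energy identity over the truncated backward cone $\{(s,r): t\le s,\ r\le R-(s-t)\}$ shows that the energy flux through the mantle is nonnegative. Therefore
\[
E\big(\vec u(s);0,R-(s-t)\big)\le E(\vec u(t);0,R)<\eps_0 \quad\text{for all } s\in[t,t+R).
\]
By Step 1, $u$ then stays in a small neighbourhood of $\ell\pi$ throughout this cone. Choosing $\eps_0$ below the small-energy threshold, the small-data critical theory from Step 2 yields uniform bounds on the solution inside the cone. Outside the cone the solution is controlled by the nonsingular exterior problem. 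Gluing the two by finite speed of propagation extends $\vec u$ as an $\E_{\ell,m}$-valued continuous solution up to time $t+R>\tau$, so $T_+(\vec u)>\tau$.

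The main obstacle is Step 3: the small-energy result must be uniform in scale, since the problem is energy critical. Smallness of energy alone has to prevent concentration at $r=0$, rather than smallness of some higher norm. I would handle this with the Shatah--Tahvildar-Zadeh argument: if energy concentrated at the tip of the cone, a nontrivial harmonic map would have to appear there, and every such harmonic map carries energy at least $E(\vec Q)=4\pi k$. This contradicts energy below $\eps_0$.
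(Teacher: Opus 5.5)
The paper does not prove this lemma; it imports it from \cite{Sha-Tah-1,Sha-Tah-2,Sha-Str-1}. Your Steps 1--3 reproduce the standard argument behind those references: the bound via $G$ forcing $u\approx\ell\pi$ wherever the local energy is small, the reduction $u=\ell\pi+r^kv$ to a radial semilinear equation on $\R^{1+(2k+2)}$, Strichartz estimates, monotonicity of the local energy on backward cones, and finite speed of propagation. One slip: since $(-\De+k^2r^{-2})(r^kv)=-r^k\De_{\R^{2k+2}}v$, the equation is $\p_{tt}v-\De_{\R^{2k+2}}v=k^2r^{-k-2}N(r^kv)$ with $N(s)=s-\tfrac12\sin 2s$. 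The right-hand side is therefore $O(r^{2k-2}v^3)$, not $r^{k-2}N(r^kv)$.

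The real problem is your final paragraph, which misidentifies the obstacle and proposes a tool that does not do the job. There is no scale-uniformity issue. The Strichartz norm $\cS$ is scale invariant, so $\|S\lin(t)\vec v_0\|_{\cS(\R)}\lesssim\|\vec v_0\|_{\E}$ holds with a scale-independent constant, and below a fixed energy threshold the contraction closes on all of $\R$. This is the last assertion of Lemma~\ref{lem:Cauchy}, and it is exactly what Step 3 needs. Concretely, Step 1 gives $|u(t,R)-\ell\pi|^2\lesssim E(\vec u(t);0,R)$. So you can extend $\vec u(t)|_{(0,R)}$ to data in $\E_{\ell,\ell}$ of energy $\lesssim\eps_0$, solve globally, and identify the result with $\vec u$ on $\{r<R-(s-t)\}$ by finite speed of propagation. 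Step 2 should therefore state this global small-energy result, not only a short-time contraction.

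The harmonic-map bubbling you invoke instead is Struwe's theorem \cite{Struwe-WM-03}; Shatah--Tahvildar-Zadeh's contribution is the self-similar non-concentration \eqref{eq:rad-rho}. Bubbling is qualitative and is proved for smooth solutions through the smooth continuation criterion. To transfer it to data that are merely in $\E_{\ell,m}$, you would need uniform Strichartz-type bounds on smooth approximants up to the putative blow-up time, in order to pass to the limit and identify the limit with $\vec u$. Those bounds are exactly the quantitative small-energy theory the bubbling was meant to replace. As a justification of Step 3 it is therefore both unnecessary and insufficient on its own. The same objection applies to your ``equivalently'' route through Lemma~\ref{lem:Cauchy-general}: continuity of the data-to-solution map in $\E$ is an output of the Strichartz theory, not an input.

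Finally, a contraction in a Strichartz space only gives uniqueness among solutions with finite Strichartz norm (compare Lemma~\ref{lem:Cauchy}). Uniqueness in the bare class $C^0(I_{\max};\E_{\ell,m})$, as the statement literally asserts, needs a separate argument.
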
 

We also recall the small data theory for $~\eqref{WM}$, which can be proved by a standard argument based on the contraction mapping principle, see \cite{CRKM-YM-WM} for an example. We shall use the version presented in \cite{JL-WM}.

\begin{lem}[Cauchy theory in $\E$, \rm{\cite[Lemma 2.8]{JL-WM}}]  \label{lem:Cauchy}  There exist functions $\de_0, C_0: [0, \infty) \to (0, \infty)$ with the following properties. Let $A\ge 0$ and let $\vec u_0 = (u_0, u_1) \in \E$ with $\|\vec u_0 \|_{\E} \le A$. Let $I \ni 0$ be an open interval such that 
\begin{align}
\| S\lin(t)  \vec u_0 \|_{\calS(I)}   = \de \le  \de_0(A), 
\end{align}
Here $S\lin(t)$ is the linear operator related to the linear equation
\begin{align}
\label{eq:lin} 
\p_t^2 v - \De v + \frac{k^2}{r^2} v = 0,
\end{align}
and the Strichartz norm $\cS(I)$ is defined as
\begin{align}
    \|w\|_{\cS(I)}:= &\left\| r^{-\frac{3}{5}} w \right\|_{L^5_{t,r}(I)}
+
\left\| r^{-\frac{2}{3}} w \right\|_{L^3_t L^6_r(I)},
\end{align}
and we use the convention $\|\vec w\|_{\cS(I)} :=\|w_0\|_{\cS(I)} $ for $\vec w=(w_0,w_1) $.

Then there exists a unique solution
$\vec u(t)=(u(t),\partial_t u(t))$ to~\eqref{WM}
with initial data $\vec u(0)=\vec u_0$, satisfying
\begin{align}
\vec u\in C^0(I;\E),  u\in \cS(I).
\end{align} 
Moreover, $\vec u(t)$ satisfies the bounds $\| u \|_{\cS(I)} \le C(A) \de$, and $ \| \vec u \|_{L^\infty_t(I; \E)} \le C(A)$. To each solution $\vec u(t)$ to~\eqref{WM} we can associate a maximal interval of existence $I_{\max}(\vec u)$ such that for each compact subinterval $I' \subset I_{\max}$ we have $\| u \|_{\cS(I')} < \infty$. 

Finally, there exists $\eps_0$ small enough so that for all $\vec u_0 \in \E$ satisfying $E(\vec u_0) < \eps_0$, the solution $\vec u(t)$ given above is defined globally in time and satisfies the bound 
\begin{align}
\sup_{t \in \R}\| \vec u(t) \|_{\E} + \| u \|_{\calS(\R)} \lesssim \|\vec u_0\|_{\E}.
\end{align}

\end{lem}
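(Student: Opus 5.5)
The plan is the standard perturbative argument around the linear flow $S\lin(t)$, run in the Strichartz space $\calS(I)$ with the energy-type dual norm $L^1_tL^2_r$ for the source. I rewrite~\eqref{WM} as the linear equation~\eqref{eq:lin} with a cubic-type right-hand side:
\begin{align}
\p_t^2 u - \De u + \frac{k^2}{r^2} u = N(u) := \frac{k^2}{r^2}\big(u - f(u)\big), \qquad f(u)=\tfrac12\sin 2u .
\end{align}
Since $|u-f(u)|\lesssim |u|^3$ and $|(u-f(u))-(w-f(w))|\lesssim |u-w|(|u|^2+|w|^2)$, we have pointwise
\begin{align}
|N(u)|\lesssim r^{-2}|u|^3, \qquad |N(u)-N(w)|\lesssim r^{-2}|u-w|\,(|u|^2+|w|^2).
\end{align}
Two linear facts are needed. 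The first is the Strichartz estimate $\|S\lin(t)\vec u_0\|_{\calS(\R)}\lesssim\|\vec u_0\|_{\E}$. I would obtain it by conjugating $u=r^k v$, which turns~\eqref{eq:lin} into the free radial wave equation in dimension $2k+2$, and then applying the radial Strichartz estimates there; the weights $r^{-3/5}$ and $r^{-2/3}$ are exactly the translations of admissible $L^5_{t,x}$ and $L^3_tL^6_x$ type norms. The second is the inhomogeneous estimate: the Duhamel term with source $F$ is bounded in $L^\infty_t\E\cap\calS$ by $\|F\|_{L^1_tL^2(r\,\ud r)}$. This follows from the energy estimate together with Christ--Kiselev, or directly from $TT^*$.

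The key nonlinear estimate is Hölder's inequality, using $r^{-2}=(r^{-2/3})^3$:
\begin{align}
\|r^{-2}u^3\|_{L^1_tL^2_r(I)} \le \|r^{-2/3}u\|_{L^3_tL^6_r(I)}^3\le \|u\|_{\calS(I)}^3 ,
\end{align}
and similarly for differences. I then set up the map
\begin{align}
\Phi(u)(t) = S\lin(t)\vec u_0 + \int_0^t S\lin(t-s)\,(0,N(u(s)))\,\ud s
\end{align}
on the ball $\{\|u\|_{\calS(I)}\le 2\de\}$. From the estimates above, $\|\Phi(u)\|_{\calS(I)}\le \de + C\de^3$ and $\|\Phi(u)-\Phi(w)\|_{\calS(I)}\le C\de^2\|u-w\|_{\calS(I)}$, so $\Phi$ is a contraction once $\de\le\de_0$. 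The energy bound is $\|\vec u\|_{L^\infty\E}\le CA + C\de^3\le C(A)$. Uniqueness in the class $\calS(I)$ follows from the same difference estimate on short subintervals where the $\calS$-norm is small. In fact $\de_0$ can be taken independent of $A$ here; only $C(A)$ depends on $A$.

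The maximal interval of existence and the blow-up criterion come from the usual continuation argument. If $\|u\|_{\calS(I')}<\infty$ on $I'=[0,T)$, I partition $I'$ into finitely many intervals on which the $\calS$-norm is small. Duhamel then shows that $\|S\lin(t-t_0)\vec u(t_0)\|_{\calS}$ is small near $T$, so the solution extends past $T$.

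For the final claim, the first step is to show that small energy $E(\vec u_0)<\eps_0$ for $\vec u_0\in\E$ forces $\|u_0\|_{L^\infty}$ small. I would use $|G(u_0(r))|\le E(\vec u_0; 0,r)$ with $G(u)=\int_0^u|\sin\rho|\,\ud\rho$, which gives smallness because $u_0(0)=0$. With $u_0$ uniformly small we have $\sin^2u_0\simeq u_0^2$, hence $\|\vec u_0\|_{\E}^2\simeq E(\vec u_0)$. The global Strichartz bound then gives $\|S\lin\vec u_0\|_{\calS(\R)}\lesssim \eps_0^{1/2}\le\de_0$, and the fixed point on $I=\R$ gives the global bound. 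I expect the main obstacle to be the linear Strichartz estimate in the specific weighted norms, in particular checking that the conjugation to the $(2k+2)$-dimensional radial wave equation maps $\E$ onto $\dot H^1\times L^2$ and maps the $\calS$-norms onto admissible norms, uniformly for all $k\ge1$.
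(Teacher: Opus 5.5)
Your nonlinear argument is the standard contraction scheme the paper refers to. The paper gives no proof of its own; it cites \cite[Lemma 2.8]{JL-WM} and points to \cite{CRKM-YM-WM}. That part is fine:
\begin{itemize}
\item the pointwise bounds on $N(u)$,
\item the H\"older step $\|r^{-2}u^3\|_{L^1_tL^2}\le\|r^{-2/3}u\|_{L^3_tL^6}^3$,
\item the energy bound, uniqueness and continuation,
\item the small-energy reduction via $G$.
\end{itemize}
For an $L^1_tL^2$ source, Minkowski's inequality already gives the Duhamel bound, so Christ--Kiselev is not needed.

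\textbf{The gap.} It is in the one input you did not verify, the homogeneous estimate $\|S\lin(t)\vec u_0\|_{\calS(\R)}\lesssim\|\vec u_0\|_{\E}$, and it fails exactly for $k\ge 2$, the range of this paper. Under $u=r^kv$ the space $\E$ does become radial $\dot H^1\times L^2(\R^{2k+2})$. However, the $\calS$-norms become
\[
\|r^{-\frac35}u\|_{L^5(r\,\ud r)}\simeq\big\||x|^{\frac{3(k-1)}{5}}v\big\|_{L^5(\R^{2k+2})},\qquad
\|r^{-\frac23}u\|_{L^6(r\,\ud r)}\simeq\big\||x|^{\frac{2(k-1)}{3}}v\big\|_{L^6(\R^{2k+2})}.
\]
These are unweighted only for $k=1$, which is the four-dimensional setting of \cite{CRKM-YM-WM}. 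For $k\ge2$ they carry positive weights. Indeed, unweighted $L^5_{t,x}$ is not even $\dot H^1$-critical in $\R^{1+(2k+2)}$, since that would require $2k+3=5k$. So ``apply the radial Strichartz estimates there'' does not give the bound.

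H\"older against the Strauss bound $|x|^k|v|\lesssim\|v\|_{\dot H^1}$ does recover the $L^5$ piece, because it reduces to the admissible $L^{2+3/k}_{t,x}$ norm. For the $L^3_tL^6_x$ piece, which is the only one your contraction uses, the same reduction lands in $L^{1+2/k}_tL^{2+4/k}_x$. That is not a Strichartz space once $k\ge3$.

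\textbf{A repair valid for every $k\ge1$.} Replace Strichartz by a Morawetz/Kato-smoothing estimate. For $u=S\lin(t)\vec u_0$, set
\[
a(t):=\int_0^\infty\frac{|u(t,r)|^2}{r^2}\,\ud r,\qquad \int_\R a(t)\,\ud t\lesssim\|\vec u_0\|_{\E}^2 .
\]
This follows from the order-$k$ Hankel transform and Plancherel in $t$, since $\int_0^\infty s^{-2}J_k(s)^2\,\ud s<\infty$ for $k\ge1$. Equivalently, it is the estimate $\||x|^{-3/2}v\|_{L^2_{t,x}}\lesssim\|\vec v(0)\|_{\dot H^1\times L^2}$ in $\R^{1+(2k+2)}$, which holds because $2k+2\ge4$.

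Combine it with three further facts:
\begin{itemize}
\item linear energy conservation;
\item the pointwise bound $\|u(t)\|_{L^\infty}\lesssim\|u(t)\|_H$;
\item the inequality $|u(t,r)|^2\le 2r^{1/2}a(t)^{1/2}\|u(t)\|_H$, obtained by writing $|u(r)|^2\le2\int_0^r|u\,\p_\rho u|\,\ud\rho$ and applying Cauchy--Schwarz.
\end{itemize}
These give
\[
\|r^{-\frac35}u\|_{L^5_tL^5}^5\le\|u\|_{L^\infty_{t,r}}^3\int_\R a\,\ud t,\qquad
\|r^{-\frac23}u\|_{L^3_tL^6}^3\le 2\sup_t\|u(t)\|_H\int_\R a\,\ud t .
\]
Hence $\|S\lin(t)\vec u_0\|_{\calS(\R)}\lesssim\|\vec u_0\|_{\E}$, and the rest of your argument then goes through unchanged.
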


\subsection{Soliton resolution}

\begin{thm}[Soliton resolution for blow-up solutions, \rm{\cite[Theorem 1]{JL-WM}}] \label{thm:src-blowup}  Let $k \in \N$, let $\ell, m \in \Z$,  and let $\vec u(t)$ be a finite energy solution to~\eqref{WM} with initial data $\vec u(0) = \vec u_0 \in \E_{\ell, m}$,  defined on its maximal forward interval of existence $[0,T_+)$. If $T_+ < \infty$, there exists a time $T_0< T_+$,  integers $m_\De,m_{\infty}$, a mapping $\vec u_0^*\in \E_{0, m_{\infty}}$, an integer $N \ge 1$, continuous functions $\lam_1(t), \dots,  \lam_N(t) \in C^0([T_0, T_+))$, signs $\iota_1, \dots, \iota_N \in \{-1, 1\}$, and $\vec g(t) \in \E$ defined by 
\begin{align}\label{eq:decomposition-u-thm}
    \vec u(t) = m_\De  \vec \pi + \sum_{j =1}^N \iota_j(\vec Q_{\lam_j(t)} - \vec \pi) + \vec u^*_0 + \vec g(t) , 
\end{align}
such that 
\begin{align}
      \| \vec g(t)\|_{\E} + \sum_{j =1}^{N} \frac{\lam_{j}(t)}{\lam_{j+1}(t)}  \to 0 \mas t \to T_+, 
\end{align}
where above we use the convention that $\lam_{N+1}(t) = T_+-t$. 

Analogous statements hold for the backwards-in-time evolution. 
\end{thm}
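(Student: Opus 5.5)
This theorem is quoted from \cite{JL-WM}, and in the present paper we would simply invoke it. If we had to prove it, we would follow the strategy of \cite{JL-WM}: first reduce to the concentrating part, then obtain a decomposition along a sequence of times, and finally upgrade it to continuous time by a ``no-return'' argument built on modulation analysis. The main obstacle is the last step.

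\textbf{Extracting the radiation.} By finite speed of propagation and the local theory of Lemma~\ref{lem:lwp-1}, $\vec u(t)$ converges in $\E(R)$ for every $R>0$ to a finite-energy limit $\vec u_0^*$ as $t\to T_+$. Subtracting this limit leaves a component whose energy lives in $\{r\le T_+-t\}$. The Shatah--Tahvildar-Zadeh estimate \eqref{eq:rad-rho} shows that this component carries no energy in the self-similar region $\alpha(T_+-t)\le r\le T_+-t$. This is what gives the condition $\lam_N(t)/(T_+-t)\to 0$ in the conclusion.

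\textbf{Sequential resolution.} We would next produce times $t_n\to T_+$ along which the decomposition \eqref{eq:decomposition-u-thm} holds with $\|\vec g(t_n)\|_{\E}\to 0$. The input is a virial/Morawetz identity localized to the light cone, which shows that the time average of $\|\p_t u\|_{L^2(r\le T_+-t)}^2$ tends to zero. Along times where the kinetic energy is small, a compactness argument in the spirit of \cite{Cote-WM-sequential, Jia-Kenig-sequential} applies: the profile is a sequence of rescaled harmonic maps $\pm Q_\lam$ plus a remainder that vanishes in $\E$. In the equivariant setting this uses the rigidity that every finite-energy stationary solution is some $Q_{\mu,\sigma,m}$.

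\textbf{Continuous time (the main obstacle).} To pass to all $t$, we argue by contradiction and assume there are \emph{collision intervals} $[a_n,b_n]$.
\begin{itemize}
\item At the endpoints, $\vec u$ is close to an $N$-bubble configuration.
\item In the interior, $\vec u$ moves far from every multi-bubble configuration with the same number of exterior bubbles.
\end{itemize}
On these intervals we would use the modulation method: define modulation parameters $\lam_j(t)$ by orthogonality conditions, and derive ODE-type bounds on $\lam_j'$ from the interaction terms $\la \Lam Q_{\lam_j}, \cdot\ra$. An energy-based coercivity estimate, using the nondegeneracy of $\vD^2E(Q)$ up to $\Lam Q$, controls $\|\vec g\|_{\E}$ by the bubble ratios. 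These bounds show that once the bubbles decouple, the configuration cannot ``return''. They are then combined with the virial estimate integrated over $[a_n,b_n]$, which forces the kinetic energy to be small on most of the interval. The two facts together contradict the assumed departure from multi-bubble configurations.

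The delicate part is the bookkeeping of scales. One has to track the interaction between the largest and second-largest bubble uniformly, using the asymptotics of $\Lam Q$ at $0$ and $\infty$; the case $k=1$, where $\Lam Q\notin L^2$, needs extra care. One must also show that the time spent in the collision regime is too short for the virial inequality to be satisfied. Once no-return is established, the continuity of $\lam_j(t)$ and the convergence $\|\vec g(t)\|_{\E}\to 0$ follow from the sequential statement and the modulation bounds.
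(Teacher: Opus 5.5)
Your proposal agrees with the paper: the theorem is not proved here but quoted from \cite[Theorem 1]{JL-WM}, and invoking that result is exactly what the paper does. Your sketch of the underlying argument is a fair high-level summary of that reference, though none of it is needed in the present paper. That sketch covers extracting the radiation, sequential bubbling via the virial identity and compactness, and the upgrade to continuous time via collision intervals and modulation analysis.
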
 

An immediate result of this theorem is that for $\vec u$ with the decomposition ~\eqref{eq:decomposition-u-thm}, we have
\begin{align}\label{eq:energy-decoupling}
    E(\vec u)=NE(\vec Q)+E(\vec u^*),
\end{align}
which is the starting point of Section~\ref{Sec:4}

\subsection{Modification of the decomposition}

 For any finite energy blow-up solution $\vec v$ to ~\eqref{WM}, Theorem~\ref{thm:src-blowup} yields a decomposition of the following form:
    \begin{align}
    \vec v(t) = m_\De  \vec \pi + \sum_{j =1}^N \iota_j(\vec Q_{\mu_j(t)} - \vec \pi) + \vec v^*_0 + \vec h(t) ,
\end{align}
    such that 
\begin{align}
      \| \vec h(t)\|_{\E} + \sum_{j =1}^{N} \frac{\mu_{j}(t)}{\mu_{j+1}(t)}  \to 0 \mas t \to T_+.
\end{align}
We denote by $\vec v^*(t,r)$  the solution to ~\eqref{WM} with initial data $\vec v_0^*(r)$ at time $T_+$ .
 
\begin{lem}\label{lem:small-radiation-valid}
     Let $\de>0$ be arbitrary. Consider $\vec v$ and the decomposition given above. If $v_0^*(r)$ satisfies ~\eqref{eq:radiation-regularity-1} and ~\eqref{eq:radiation-regularity-2} with $\vec u_0^*$ replaced by $\vec v_0^*$, then there exists a finite energy blow-up solution $\vec u$ to ~\eqref{WM} defined on $(T_1,T_+)$ for some $T_1$ sufficiently close to $T_+$ admitting a decomposition
    \begin{align}
        \vec u(t) =  \sum_{j =1}^N \iota_j\vec Q_{\mu_j(t)}  + \vec u^*_0 + \vec {\ti h}(t) ,
    \end{align}
    such that

    \begin{enumerate}[label=(\roman*)]
        \item \begin{align}\label{eq:ortho-lem}
        \| \vec {\ti h}(t)\|_{\E} + \sum_{j =1}^{N} \frac{\mu_{j}(t)}{\mu_{j+1}(t)}  \to 0 \mas t \to T_+,
    \end{align}
        \item $\vec u^*_0\in \E$ is compactly supported and 
    \begin{align}\label{eq:radiation-small-lem}
        \sup_{t\in(T_1,T_+]}\|\vec u^*(t)\|_{\E}<C\de,
    \end{align}
    where $\vec u^*(t,r)=(u^*(t,r),\p_t u^*(t,r))$ is the solution to ~\eqref{WM} with initial data $\vec u_0^*(r)$ at time $T_+$ and $C$ depends only  on the equivariance degree $k$.
        \item For $\vec u^*(t,r)$ defined above we have for $t\in (T_1,T_+]$ and $k=2$,
        \begin{align}\label{eq:radiation-decay-k-2}
        |u^*(t,r)|\lesssim\min\{r^2\left(1+|\log r|^{\frac{1}{2}}\right),1\},
    \end{align}
    for $k\geq 3$,
        \begin{align}\label{eq:radiation-decay-k-3}
        |u^*(t,r)|\lesssim\min\{r^2,1\}.
    \end{align}
    Here the implicit constants are independent of $t$.
        \item There exists a constant $\rho>0$ such that, for all $t\in(T_1,T_+]$,
        \begin{align}\label{eq:radiation-decay-ut-outside}
            \partial_tu^*(t,r)=0, \ \mfor \ r\geq 8\rho,
        \end{align}
        and
        \begin{align}\label{eq:radiation-decay-ut-local}
             |\p_tu^*(t,r)|\lesssim\min\{r,1\},\ \mfor \ r\leq\rho,
        \end{align}
        where the implicit constants are independent of $t$.

    \end{enumerate}

\end{lem}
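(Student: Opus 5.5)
We first normalize, then localize the radiation, then read off the pointwise bounds from its regularity.

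\textbf{Normalization.} Replacing $\vec v$ by $\pm(\vec v-m_\De\vec\pi)$ does not change the equation, since $\sin 2u$ is $\pi$-periodic and odd. So we may assume $m_\De=0$. The constant terms $\sum_j\iota_j(-\vec\pi)$ are then absorbed by shifting $\vec v_0^*$ by the appropriate multiple of $\vec\pi$. With this convention the limit $\lim_{r\to0}v_0^*(r)$ vanishes, because the bubbles $\vec Q_{\mu_j}$ carry all the topology near the origin. The shift does not affect \eqref{eq:radiation-regularity-1}--\eqref{eq:radiation-regularity-2}, since those conditions are invariant under $u\mapsto m\pi\pm u$ up to a rotation of $\bbS^2$.

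\textbf{Localization.} Fix a small $\rho>0$ and let $\vec u_0^*:=\chi_{\rho}\vec v_0^*$. Because $v_0^*(0)=0$ and $\vec v_0^*\in\E$, we have $\|\chi_\rho\vec v_0^*\|_{\E}\to0$ as $\rho\to0$. The cutoff keeps the $H^3\times H^2$ regularity of the associated map $(\Psi_0,\Psi_1)$, and the support is compact.

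Let $\vec u^*$ be the solution of \eqref{WM} with data $\vec u_0^*$ at time $T_+$. By the small-data part of Lemma~\ref{lem:Cauchy}, $\vec u^*$ is global with $\sup_t\|\vec u^*(t)\|_\E\lesssim\|\vec u_0^*\|_\E< C\de$, which gives (ii).

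Now take the solution $\vec v$ on $(T_1,T_+)$, multiply by a cutoff supported in $\{r\le 2\rho\}$, and re-solve. By finite speed of propagation, $\vec v$ agrees with the solution $\vec u$ of \eqref{WM} having data $\chi_{2\rho}\vec v(T_1)$ inside the cone $\{r<\rho-(t-T_1)\}$. Choose $T_+-T_1<\rho/2$. Inside the cone the bubbles and $\vec h$ are unchanged. Outside $\{r\le\rho/2\}$, $\vec u$ coincides with $\vec u^*$ up to errors that vanish as $t\to T_+$; this follows from finite speed of propagation applied to $\vec u$ and $\vec u^*$, which have the same data at $T_+$ outside the origin. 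Hence $\vec u$ blows up at $T_+$ and admits the decomposition in (i), with some $\vec{\ti h}$ satisfying $\|\vec{\ti h}(t)\|_\E\to0$. Since the data are compactly supported, $\vec u^*(t)\equiv0$ for $r\ge 2\rho+|t-T_+|\le 8\rho$, and in particular $\p_tu^*=0$ there, which is \eqref{eq:radiation-decay-ut-outside}.

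\textbf{Pointwise bounds.} This is the main step. By Lemma~\ref{lem:Cauchy-general} with $s=3$, together with persistence of regularity on the interval of existence of the small solution (using the standard blow-up criterion; the small energy excludes concentration), we get
\[
(\Psi^*-\Psi_\infty,\p_t\Psi^*)\in C([T_1,T_+],H^3\times H^2)
\]
with uniform bounds. We then need to translate $k$-equivariant regularity into vanishing at $r=0$.

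For the position, $\Psi^*$ has components $\sin u^*\,e^{ik\te}$ and $\cos u^*$. The horizontal part $w(x)=\sin u^*(r)e^{ik\te}\in H^3(\R^2)$ is a function of $k$-th angular mode. For a function of angular mode $k$ with $k\ge3$, $H^3$ regularity forces a Taylor expansion at the origin containing only homogeneous polynomials of degree $\ge k$ with matching angular dependence. Since $H^3\subset C^{1,\alpha}$ and the degree-$2$ Taylor coefficients must vanish for $k\ge3$, we obtain $|w|\lesssim r^2$. For $k=2$ the degree-$2$ term is allowed. The borderline embedding $H^3\not\subset C^2$ gives $|w-P_2|\lesssim r^2|\log r|^{1/2}$ via the Brezis--Gallouet/Trudinger-type estimate for $D^2w\in H^1$, so $|w|\lesssim r^2(1+|\log r|^{1/2})$. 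Because $u^*$ is small (the energy is small and $u^*(0)=0$), $|u^*|\sim|\sin u^*|$, and this yields \eqref{eq:radiation-decay-k-2}--\eqref{eq:radiation-decay-k-3}. The bound by $1$ away from the origin is just smallness.

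For the velocity, $\p_t\Psi^*$ has horizontal part $\p_tu^*\cos u^*\,e^{ik\te}\in H^2\subset C^{0,\alpha}$ with angular mode $k\ge2$. Hence it vanishes at $0$, and by the Morrey/Sobolev embedding $H^2\subset C^{0,\alpha}$ for every $\alpha<1$, $|\p_tu^*|\lesssim r^\alpha$. To reach the full power $r$, use that the degree-$1$ part of a mode-$k$ function with $k\ge2$ vanishes, and then apply the same borderline argument. We expect this to give only $r|\log r|^{1/2}$, so the precise bound $\min\{r,1\}$ in \eqref{eq:radiation-decay-ut-local} may need an extra argument. We would try the equation itself: $\p_tu^*(t)=\p_tu^*(T_+)+\int\p_t^2u^*$, combined with the $H^3$ bound on $\partial_t^2\Psi$ coming from the equation. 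The main obstacle is making these mode-$k$ Sobolev vanishing estimates uniform in $t$ and sharp at the borderline.
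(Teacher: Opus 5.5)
\textbf{Gap in the localization step.} The solution $\vec u$ launched from $\chi_{2\rho}\vec v(T_1)$ does not have $\chi_\rho\vec v_0^*$ as its radiation, so (i) and (ii) are not established.

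By finite speed of propagation, $\vec u$ agrees with $\vec v$ (up to the constant shift) on $\{r<2\rho-(t-T_1)\}$. Hence its radiation equals $\vec v_0^*$ on $(0,2\rho-(T_+-T_1))$, and this differs from $\chi_\rho\vec v_0^*$ on $(\rho,2\rho-(T_+-T_1))$. For $r>2\rho-(T_+-T_1)$ the radiation comes from the nonlinear evolution of the truncated data, which has no reason to vanish beyond $2\rho$. Your claim that $\vec u$ and $\vec u^*$ ``have the same data at $T_+$ outside the origin'' assumes exactly what must be proved. With your choice, $\vec{\ti h}(t)=\vec u(t)-\sum_j\iota_j\vec Q_{\mu_j(t)}-\chi_\rho\vec v_0^*$ does not tend to $0$ in $\E$.

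The paper instead lets $\vec u_0^*$ be the \emph{actual} radiation of the truncated solution and proves that it is small. It cuts off at the light-cone scale $R=5(T_+-T_1)$. It chooses $T_1$ so that, by soliton resolution, $\vec v_1(T_1)=\vec v(T_1)-m_\De\vec\pi$ has energy $<\de/2$ on $(\frac1{100}(T_+-T_1),100(T_+-T_1))$. It then applies the small-data theory of Lemma~\ref{lem:Cauchy} to the exterior piece $(1-\chi_{\frac1{50}(T_+-T_1)})\chi_R\vec v_1(T_1)$. By finite speed of propagation this controls the radiation on the intermediate annulus, while near $0$ the radiation equals $\vec v_0^*$ and for large $r$ it vanishes. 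Your cutoff at a fixed small scale can be repaired the same way, using the smallness of $\vec v^*$ on $\{r\lesssim\rho\}$.

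Alternatively, you can keep $\vec u_0^*=\chi_\rho\vec v_0^*$ by gluing instead of cutting. Since $\vec v_1=\vec v^*$ on $\{r>T_+-t\}$ and $\vec u^*=\vec v^*$ on $\{r<\rho-(T_+-t)\}$, the data $\vec v_1(T_1)$ and $\vec u^*(T_1)$ coincide exactly on $(T_+-T_1,\rho-(T_+-T_1))$. Take $\vec v_1(T_1)$ for $r\le\rho/2$ and $\vec u^*(T_1)$ for $r>\rho/2$, with $T_+-T_1\le\rho/6$. Finite speed of propagation then shows the new solution equals $\vec v_1$ near the origin and $\vec u^*$ elsewhere, so its radiation is exactly $\chi_\rho\vec v_0^*$.

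\textbf{Gap in the pointwise bounds.} You leave \eqref{eq:radiation-decay-ut-local} open, and your proposed fallback does not work. For $H^3\times H^2$ solutions, $\p_t^2\Psi^*$ lies only in $H^1$, not $H^3$. Moreover, integrating $\p_t^2u^*$ from $T_+$ would require the same bound for $\p_tu^*(T_+)$, which is circular.

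The missing ingredient is the one-dimensional Hardy-type reduction of Appendix~\ref{sec:appendix-f}. For $G=h(r)e^{ik\te}\in H^2(\R^2)$, the identity \eqref{eq:Hessian} bounds $\p_r(h/r)$ and $h'/r-k^2h/r^2$ in $L^2(r\,\ud r)$. Since $(k^2-1)h/r^2=\p_r(h/r)-(h'/r-k^2h/r^2)$, for $k\ge2$ you also get $h/r^2\in L^2(r\,\ud r)$. Then $\p_r[(h/r)^2]=2(h/r^2)\,\p_r(h/r)\,r$ is integrable on $(0,1)$ by Cauchy--Schwarz, so $\sup_{0<r\le1}|h/r|\lesssim\|G\|_{H^2}$ with no logarithmic loss. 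Apply this to $h=\p_tu^*\cos u^*$, using the $H^2$ bound on $\p_t\Psi^*(t)$ that is uniform for $t$ near $T_+$; this gives $|\p_tu^*|\lesssim r$. It is the paper's $k\ge3$ argument for $u^*$ moved down one derivative, where $k\ge2$ already suffices.

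Likewise, your Taylor-coefficient reasoning for $u^*$ is only heuristic, since $H^3(\R^2)\not\subset C^2$ and $P_2$ is not defined. The rigorous version applies the same identities to $D_\pm(fe^{ik\te})$ with $f=\sin u^*$. This gives $\int_0^1|\p_r(f/r^2)|^2r\,\ud r\lesssim1$, hence $|f|\lesssim r^2(1+|\log r|^{1/2})$ for $k=2$ by Cauchy--Schwarz. For $k\ge3$ it additionally gives $\int_0^1|f|^2r^{-5}\,\ud r\lesssim1$, which removes the logarithm.
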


\begin{rem}
    We note that $\vec u^*$ in this lemma is not the radiation as defined before. However, we still call it radiation for convenience.
\end{rem}

\begin{proof}
    Let $\vec v_1(t,r)=\vec v(t,r)-m_{\De}\vec \pi$. We choose $T_1$ sufficiently close to $T_+$ such that 
    \begin{align}
        \|\vec v_1(T_1)\|_{H\times L^2(\frac{1}{100}(T_+-T_1),100(T_+-T_1))}<\frac{1}{2}\de,
    \end{align}
    \begin{align}
        \sup_{t\in[T_1,T_+]}\|\vec v^*(t,r)\|_{\E(0,100(T_+-T_1))}<\de,
    \end{align}
    and take $R=5(T_+-T_1)$. The existence of such $T_1$ is a consequence of Theorem~\ref{thm:src-blowup}. We consider 
    $\chi_R\vec v_1(T_1)$ and propagate it forward through ~\eqref{WM}. We denote the new solution by $\vec u(t,r)$. By finite speed of propagation we know that $\vec u(t,r)=\vec v_1(t,r)$ in the region $\{r\leq R-t+T_1\}$. Since $ R-T_++T_1>0 $, we know that $\vec u(t,r)$ blows up at $t=T_+$. Now we consider the radiation $\vec u^*_0(r)$ of $\vec u$. 
    Finite speed of propagation yields that $\vec u^*_0(r)=\vec v^*_0(r)$ in the region $\{r\leq R-T_++T_1\}$, and $\vec u^*_0(r)=\vec 0$ in the region $\{r\geq 2R+T_+-T_1\}$. As for the region $\{ R-T_++T_1< r<2R+T_+-T_1\}$, applying Lemma~\ref{lem:Cauchy} to $ (1-\chi_{\frac{1}{50}(T_+-T_1)})\chi_R\vec v_1(T_1,r) $  we get
    \begin{align}
        \|\vec u^*_0\|_{\E(R-T_++T_1,\,2R+T_+-T_1)}
\lesssim
\|\vec v_1(T_1)\|_{\E(\frac{1}{100}(T_+-T_1),\,100(T_+-T_1))}
\leq \de.
    \end{align}
    Therefore the radiation  satisfies
    \begin{align}
        \|\vec u^*_0\|_{\E}\lesssim \de,
    \end{align}
    and the proof of ~\eqref{eq:ortho-lem} and ~\eqref{eq:radiation-small-lem} is completed by considering $\vec u+\sum_{j=1}^N\iota_j \vec \pi$ and using Lemma~\ref{lem:Cauchy} for $\vec u^*_0(r)$. 

    Now we prove ~\eqref{eq:radiation-decay-k-2} and ~\eqref{eq:radiation-decay-k-3}. We first consider $u^*$. By radial Sobolev embedding we have $|u^*(t,r)|\lesssim 1$. We only have to consider the behavior of $u^*(t,r)$ near $r=0$. Since $\vec u^*(t,r)=\vec v^*(t,r)$ in $\{(r,t):T_1<t<T_+,\ r\leq t+R+T_1-2T_+\}$, using Hardy's inequality,  we know that  ~\eqref{eq:radiation-regularity-1} and ~\eqref{eq:radiation-regularity-2} hold for $\vec u^*_0(r)$ replaced by $\chi_{\frac{R-T_++T_1}{3}}(r)\vec u^*_0(r)$. 
    
    Since $\vec u^*(t,r)=\vec v^*(t,r)$ in $\{(r,t):T_1<t<T_+,\ r\leq t+R+T_1-2T_+\}$, by considering the solution with initial data $\chi_{\frac{R-T_++T_1}{3}}(r)\vec u^*_0(r)$ at time $T_+$, and using Lemma~\ref{lem:Cauchy-general} for $\vec \Psi$ related to $\chi_{\frac{R-T_++T_1}{3}}(r)\vec u^*_0(r)$, we may assume that $\vec u^*(t,r)$ is supported in $\{r\leq 1\}$, and for $t$ sufficiently close to $T_+$,  $\vec u^*(t,r)$ satisfies  ~\eqref{eq:radiation-regularity-1} and ~\eqref{eq:radiation-regularity-2} uniformly in $t$  with $u^*_0(r)$ replaced by $u^*(t,r)$. 

    Let $f(t,r)=\sin u^*(t,r)$, then we have $f(t,r)e^{ik\te}\in H^3(\R^2)$. We claim that
    \begin{align}\label{eq:f-k-1}
        \int_0^1\frac{|f(t,r)|^2}{r^3}\ud r+\int_0^1\left|\p_r\left(\frac{f(t,r)}{r^2}\right)\right|^2r\ud r\lesssim1, \ \mfor\ k\geq 2,
    \end{align}
    \begin{align}\label{eq:f-k-2}
        \int_0^1\frac{|f(t,r)|^2}{r^5}\ud r\lesssim1, \ \mfor\ k\geq 3.
    \end{align}

    The proof of these estimates will be shown in Appendix~\ref{sec:appendix-f}.     

    For $k=2$ and $0< r_0< 1$ we estimate
    \begin{align}
        \frac{|f(t,r_0)|}{r_0^2}\leq& \frac{|f(t,1)|}{1^2}+\int_{r_0}^1\left|\p_r\left(\frac{f(t,r)}{r^2}\right)\right|\ud r\\
        \lesssim&\  1+\left(\int_{r_0}^1\left|\p_r\left(\frac{f(t,r)}{r^2}\right)\right|^2r\ud r\right)^{\frac{1}{2}}\left(\int_{r_0}^1\frac{1}{r}\right)^{\frac{1}{2}}\\
        \lesssim&1+|\log r_0|^{\frac{1}{2}}.
    \end{align}
    
    For $k\geq 3$, we have for $0<r_1<r_2<1$,
    \begin{align}
        \left|\left(\frac{f(t,r_1)}{r_1^2}\right)^2-\left(\frac{f(t,r_2)}{r_2^2}\right)^2\right|\lesssim &\int_{r_1}^{r_2}\frac{|f(t,r)|}{r^2}\left|\p_r\left(\frac{f(t,r)}{r^2}\right)\right|\ud r \\
        \lesssim &\left(\int_{r_1}^{r_2}\frac{|f(t,r)|^2}{r^5}\ud r\right)^{\frac{1}{2}}\left(\int_{r_1}^{r_2}\left|\p_r\left(\frac{f(t,r)}{r^2}\right)\right|^2r\ud r\right)^{\frac{1}{2}},
    \end{align}
    which combined with ~\eqref{eq:f-k-1} and ~\eqref{eq:f-k-2} yields $\lim_{r\to 0}\frac{f(t,r)}{r^2}=0$. Therefore we have for $0<r_0<1$,
    \begin{align}
        \left|\frac{f(t,r_0)}{r_0^2}\right|^2\lesssim &\int_{0}^{r_0}\frac{|f(t,r)|}{r^2}\left|\p_r\left(\frac{f(t,r)}{r^2}\right)\right|\ud r \\
        \lesssim &\left(\int_{0}^{r_0}\frac{|f(t,r)|^2}{r^5}\ud r\right)^{\frac{1}{2}}\left(\int_{0}^{r_0}\left|\p_r\left(\frac{f(t,r)}{r^2}\right)\right|^2r\ud r\right)^{\frac{1}{2}}\\
        \lesssim&\ 1.
    \end{align}

    Recall that $\sup_{t\in (T_1,T_+]}\|u^*(t,r)\|_{L^{\infty}} < C\de$. Taking $\de>0$ sufficiently small, we obtain $\sin u^*(t,r)\sim u^*(t,r)$, which implies the estimates for $u^*(t,r)$ in ~\eqref{eq:radiation-decay-k-2} and ~\eqref{eq:radiation-decay-k-3}.  The estimates for $\p_t u^*(t,r)$ are similar, we choose $\rho=\frac{3}{2}(T_+-T_1)$ and omit the details.

\end{proof}

In the rest of this paper we always assume that $\vec u^*\in \E$ is compactly supported, and $\|\vec u^*\|_{\E}$ is small enough so that Lemma~\ref{lem:D2E-coercive}  holds for $v$ replaced by $u^*$.

\subsection{Pointwise interaction estimates}

We introduce some functions which will be frequently used in  Sections~\ref{Sec:3} and~\ref{Sec:4}.

We define
\begin{align}
    F_0(r)=\begin{cases}
        \min\{r^2(1+|\log r|^{\frac{1}{2}}),1\}, &\mfor  k=2,\\ 
        \min\{r^2,1\} &\mfor k\geq 3,
    \end{cases}
\end{align}
and
\begin{align}
    \ti F_0(r)=\min\{r,1\},\ F_j(t,r)=\min\left\{\left(\frac{\lam_j(t)}{r}\right)^k, \left(\frac{r}{\lam_j(t)}\right)^k\right\},\ 1\leq j\leq N.
\end{align}

Then, using  ~\eqref{eq:radiation-decay-k-2} and ~\eqref{eq:radiation-decay-k-3}  we have
\begin{align}
    |u^*(t,r)|\lesssim F_0(r).
\end{align}
~\eqref{eq:radiation-decay-ut-local} implies
\begin{align}
    |\p_tu^*(t,r)|\lesssim \ti F_0(r), \ \mfor\ 0\leq r\leq \rho.
\end{align}

We first record a basic inequality.

\begin{lem}
    For any $M\in \N$ and $x_0,\dots,x_M\in \R$, 
\begin{align}\label{eq:decouple}
    \left| \sin\left(2\sum_{i=0}^M x_i\right) - \sum_{i=0}^M \sin(2x_i) \right| \lesssim \sum_{0 \le i < j \le M} \dist(x_i,\pi\Z)\dist(x_j,\pi \Z).
\end{align}
\end{lem}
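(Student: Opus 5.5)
We will prove \eqref{eq:decouple} by induction on $M$, after first reducing to a two-term estimate. The case $M=0$ is trivial, since the left-hand side vanishes. The core is therefore the case $M=1$, namely
\begin{align}
    |\sin(2x_0+2x_1)-\sin 2x_0-\sin 2x_1|\lesssim \dist(x_0,\pi\Z)\dist(x_1,\pi\Z).
\end{align}

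To prove the case $M=1$, we use $\pi$-periodicity. Replacing $x_i$ by $x_i-n_i\pi$ with $n_i\in\Z$ changes none of the three sines, and it does not change the distances either. Hence we may assume $|x_i|=\dist(x_i,\pi\Z)\le \pi/2$. Expanding,
\begin{align}
    \sin(2x_0+2x_1)-\sin 2x_0-\sin 2x_1=\sin 2x_0(\cos 2x_1-1)+\sin 2x_1(\cos 2x_0-1).
\end{align}
We then bound each factor using $|\sin 2x|\le 2|x|$ and $|\cos 2x-1|=2\sin^2 x\le 2|x|\min\{|x|,1\}\le 2|x|$. This gives the bound $\lesssim |x_0||x_1|$, which is the claim.

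For the inductive step, assume the estimate holds for $M-1$. Set $y=\sum_{i=0}^{M-1}x_i$ and apply the case $M=1$ to the pair $(y,x_M)$. This yields
\begin{align}
    \Big|\sin\Big(2\sum_{i=0}^M x_i\Big)-\sum_{i=0}^M\sin 2x_i\Big|\le |\sin(2y+2x_M)-\sin 2y-\sin 2x_M| + \Big|\sin 2y-\sum_{i=0}^{M-1}\sin 2x_i\Big|.
\end{align}
The second term is controlled by the induction hypothesis. For the first term, we use $\dist(\cdot,\pi\Z)$ subadditivity, $\dist(y,\pi\Z)\le\sum_{i<M}\dist(x_i,\pi\Z)$, to get the bound $\lesssim \dist(x_M,\pi\Z)\sum_{i<M}\dist(x_i,\pi\Z)$, which consists of cross terms of the required form. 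The implicit constant grows with $M$, which is acceptable since $M$ is fixed (in applications, $M=N$).

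There is no real obstacle in this argument. The only point needing care is the reduction to representatives with $|x_i|\le\pi/2$, which justifies replacing $\dist$ by $|x_i|$ in the elementary trigonometric bounds.
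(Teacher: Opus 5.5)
Your proof is correct and is essentially the paper's argument: the paper reduces to representatives in $(-\pi/2,\pi/2]$ and telescopes over the partial sums $\sum_{i<m} y_i$, expanding each increment with the addition formula exactly as in your $M=1$ case. That telescoping is your induction unrolled; where you use subadditivity of $\dist(\cdot,\pi\Z)$, the paper instead bounds $|\sin(2\sum_{i<m}y_i)|$ and $\sin^2(\sum_{i<m}y_i)$ termwise. Incidentally, your induction gives the uniform constant $8$ (since $C_M=\max\{8,C_{M-1}\}$), so the constant does not in fact grow with $M$.
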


\begin{proof}
    For each $j\in{\{0,1,\dots,M\}}$ we take $m_j\in \Z$ such that $y_j=x_j-m_j\pi\in (-\frac{\pi}{2},\frac{\pi}{2}]$. Then we only have to prove that
\begin{align}\label{eq:decouple-1}
    \left| \sin\left(2\sum_{i=0}^M y_i\right) - \sum_{i=0}^M \sin(2y_i) \right| \lesssim \sum_{0 \le i < j \le M} |y_i||y_j|.
\end{align}

We estimate
\begin{align}
     &\left|\sin\left(2\sum_{i=0}^M y_i\right) - \sum_{i=0}^M \sin(2y_i)\right|\\ =& \left|\sum_{m=1}^M\left[ \sin\left(2\sum_{i=0}^m y_i\right)- \sin\left(2\sum_{i=0}^{m-1} y_i\right)- \sin(2y_m)\right]\right|\\
     =& \left|\sum_{m=1}^M\left[ \sin\left(2\sum_{i=0}^{m-1} y_i\right)\left(\cos(2y_m)-1\right)+\sin(2y_m)\left(\cos\left(2\sum_{i=0}^{m-1} y_i\right)-1\right)\right]\right|\\
     \lesssim&  \sum_{m=1}^M\sum_{i=0}^{m-1}\left|\sin(2y_i)\right||\sin(y_m)|^2+ \sum_{m=1}^M|\sin(2y_m)|\left(\sum_{i=0}^{m-1}\left|\sin(y_i)\right|\right)^2\\
     \lesssim& \sum_{0 \le i < j \le M} |y_i||y_j|.
\end{align}
\end{proof}

\begin{cor}
    We have
    \begin{align}\label{eq:sin-decouple}
    &\left| \sin\left(2\sum_{i=1}^N \iota_iQ_{\lam_i(t)}(r)\right) - \sum_{i=1}^N \iota_i\sin(2Q_{\lam_i(t)}(r)) \right| 
    \lesssim \sum_{1 \le i < j \le N} F_i(t,r)F_j(t,r).
\end{align}

\EQ{\label{eq:sin-decouple-1}
    &\left| \sin\left(2u^*(t,r)+2\sum_{i=1}^N \iota_iQ_{\lam_i(t)}(r)\right) -\sin 2 u^*(t,r)- \sum_{i=1}^N \iota_i\sin(2Q_{\lam_i(t)}(r)) \right| \\
    \lesssim & F_0(r)\sum_{j=1}^NF_j(t,r)+ \sum_{1 \le i < j \le N} F_i(t,r)F_j(t,r).
}

Here the implicit constants are independent of $t$ and $r$.
\end{cor}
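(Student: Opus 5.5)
The two estimates follow directly from the elementary inequality \eqref{eq:decouple}, applied with the right choice of the numbers $x_i$. What remains is to bound each $\dist(x_i,\pi\Z)$ by the corresponding weight $F_i$. Throughout, $t$ and $r$ are fixed and the constants must not depend on them; this is automatic, since \eqref{eq:decouple} has a universal constant.

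For \eqref{eq:sin-decouple}, I apply \eqref{eq:decouple} to the $N$ numbers $x_{i}=\iota_iQ_{\lam_i(t)}(r)$, $1\le i\le N$, relabelled as $x_0,\dots,x_{N-1}$. Since $\iota_i\in\{\pm1\}$ and $\sin$ is odd, $\sin(2x_i)=\iota_i\sin(2Q_{\lam_i(t)}(r))$, which matches the left-hand side. Also $\dist(\iota_i Q_{\lam_i},\pi\Z)=\dist(Q_{\lam_i},\pi\Z)$.

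The key pointwise bound is, for every $s>0$,
\begin{align}
\dist(Q(s),\pi\Z)\le \min\{Q(s),\pi-Q(s)\}\le 2\min\{s^k,s^{-k}\}.
\end{align}
This holds because $0<Q(s)=2\arctan(s^k)\le 2s^k$ and $\pi-Q(s)=2\arctan(s^{-k})\le 2s^{-k}$, using $\arctan y\le y$ for $y\ge0$. With $s=r/\lam_i(t)$ this gives $\dist(x_i,\pi\Z)\le 2F_i(t,r)$, and \eqref{eq:sin-decouple} follows.

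For \eqref{eq:sin-decouple-1}, I apply \eqref{eq:decouple} with $M=N$, $x_0=u^*(t,r)$ and $x_i=\iota_iQ_{\lam_i(t)}(r)$ for $1\le i\le N$. The pairs $(i,j)$ with $1\le i<j$ are bounded exactly as above. For the pairs $(0,j)$, note that $\dist(u^*,\pi\Z)\le|u^*(t,r)|\lesssim F_0(r)$ by \eqref{eq:radiation-decay-k-2} and \eqref{eq:radiation-decay-k-3} from Lemma~\ref{lem:small-radiation-valid}, with constants uniform in $t$. This produces the term $F_0(r)\sum_j F_j(t,r)$.

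There is no real obstacle. The only points that need care are the uniformity in $t$ of the bound $|u^*|\lesssim F_0$, which Lemma~\ref{lem:small-radiation-valid} supplies, and the sign bookkeeping with the $\iota_i$.
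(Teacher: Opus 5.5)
Your proposal is correct and matches the paper's proof: apply \eqref{eq:decouple} with $x_j=\iota_jQ_{\lam_j(t)}(r)$, taking $x_0=u^*(t,r)$ for the second estimate (for the first, the paper sets $x_0=0$ where you relabel), then use $\dist(\iota_jQ_{\lam_j},\pi\Z)\lesssim F_j$ and $\dist(u^*,\pi\Z)\le|u^*|\lesssim F_0$. Your explicit $\arctan$ bound $\dist(Q(s),\pi\Z)\le 2\min\{s^k,s^{-k}\}$ just supplies the estimate the paper states without proof.
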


\begin{proof}
    We take $x_j=\iota_jQ_{\lam_j(t)}(r)$ when $1\leq j\leq N$, $x_0=0$ for ~\eqref{eq:sin-decouple} and $x_0=u^*(t,r)$ for ~\eqref{eq:sin-decouple-1}. Then we have
\begin{align}
    \dist(\iota_jQ_{\lam_j(t)},\pi\Z)\lesssim\min\left\{\left(\frac{\lam_j(t)}{r}\right)^k, \left(\frac{r}{\lam_j(t)}\right)^k\right\}=F_j(t,r)
\end{align}
and
\begin{align}
    \dist(u^*(t,r),\pi\Z)\lesssim|u^*(t,r)|\lesssim F_0(r).
\end{align}
\end{proof}

The estimates for integrals of products involving these  $F_j$'s are computed in Appendix~\ref{sec:appendix-Fj}.

\section{Estimates on modulation parameters}\label{Sec:3}

In this section we study the solution to ~\eqref{WM} using the modulation method.

We first recall the localized virial operator defined in \cite{JL-WM}.

\begin{lem}[\rm{\cite[Lemma 4.13]{JL-WM}}]  
\label{lem:q} 
For any $c > 0$ and $R > 1$ there exists a function $q = q_{c, R} \in C^4((0, \infty))$
having the following properties:
\begin{enumerate}[(P1)]
\item $q(r) = \frac 12 r^2$ for all $r \in [R^{-1}, R]$,
\item there exists $\wt R > 0$ (depending on $c$ and $R$)
such that $q(r) = \tx{const}$ for $r \geq \wt R$ and $q(r) = \tx{const}$ for $r \leq \wt R^{-1}$,
\item $|q'(r)| \lesssim r$ and $|q''(r)| \lesssim 1$ for all $r > 0$,
with constants independent of $c$ and $R$,
\item $q''(r) \geq -c$ and $\frac 1r q'(r) \geq -c$ for all $r > 0$,
\item $\big|\big(\frac{\vd^2}{\vd r^2} + \frac 1r\frac{\vd}{\vd r}\big)^2q(r)\big| \leq cr^{-2}$ for all $r > 0$,
\item $\big|\big(\frac{q'(r)}{r}\big)'\big| \leq cr^{-1}$ for all $r > 0$.
\end{enumerate}
\end{lem}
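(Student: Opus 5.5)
We will build $q$ explicitly by prescribing the ratio $q'(r)/r$ as a slowly varying function of $s=\log r$. Fix once and for all a smooth nonincreasing $\psi:\R\to[0,1]$ with $\psi=1$ on $(-\infty,0]$ and $\psi=0$ on $[1,\infty)$. For a large parameter $L>0$, to be chosen in terms of $c$, set
\begin{align}
\varphi(s)=\psi\Big(\frac{s-\log R}{L}\Big)\,\psi\Big(\frac{-s-\log R}{L}\Big).
\end{align}
Then $\varphi=1$ on $[-\log R,\log R]$, $\varphi=0$ for $|s|\ge \log R+L$, $0\le\varphi\le1$, and $|\varphi^{(j)}|\lesssim L^{-j}$ for $j=1,2,3$. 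We define $q$ through $q'(r)=r\varphi(\log r)$, normalized so that $q(1)=\tfrac12$. Since $\varphi=1$ on $[-\log R,\log R]$, this gives $q(r)=\tfrac12 r^2$ on $[R^{-1},R]$, which is (P1). Since $q'$ vanishes for $r\ge \wt R:=Re^{L}$ and for $r\le \wt R^{-1}$, $q$ is constant on each of these two regions, which is (P2). The two constants differ, but that is allowed. Moreover $q$ is $C^\infty$ on $(0,\infty)$, in particular $C^4$.

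The remaining properties are computations in the variable $s=\log r$. We have $q'(r)/r=\varphi(\log r)\in[0,1]$, so $|q'|\le r$ and $\tfrac1r q'\ge0\ge -c$. Next, $q''(r)=\varphi(\log r)+\varphi'(\log r)$. This gives $|q''|\le 1+O(L^{-1})\lesssim1$ with a constant independent of $c,R$, which completes (P3), and $q''\ge -\|\varphi'\|_\infty\ge -c$ once $L\gtrsim c^{-1}$, which is (P4). Also $\big(\tfrac{q'}{r}\big)'=r^{-1}\varphi'(\log r)$, so (P6) holds under the same condition on $L$.

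For (P5) we use the identity $\big(\tfrac{\vd^2}{\vd r^2}+\tfrac1r\tfrac{\vd}{\vd r}\big)g(\log r)=r^{-2}g''(\log r)$. Writing $\Delta_r:=\tfrac{\vd^2}{\vd r^2}+\tfrac1r\tfrac{\vd}{\vd r}$, this yields
\begin{align}
\Delta_r q = q''+\tfrac1r q' = 2\varphi(\log r)+\varphi'(\log r).
\end{align}
This is a function of $\log r$ alone, so applying the identity once more gives
\begin{align}
\Delta_r^2 q = r^{-2}\big(2\varphi''+\varphi'''\big)(\log r).
\end{align}
Its size is at most $r^{-2}\,O(L^{-2})\le c\,r^{-2}$ for $L$ large.

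The only point requiring care is that every ``small'' constant involves derivatives of $\varphi$ of order at least one. The only order-zero quantities, $\varphi$ and $2\varphi$, appear with favorable sign, or in (P3), where no smallness is needed. This is exactly why the transition of $q'/r$ from $1$ to $0$ must be spread over a long logarithmic interval of length $L\sim c^{-1}$, and it forces $\wt R=Re^{L}$ to depend on $c$, in accordance with (P2). Beyond this observation, the argument involves no real obstacle.
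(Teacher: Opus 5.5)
The paper does not prove this lemma; it quotes it verbatim from \cite[Lemma 4.13]{JL-WM}. So there is no internal argument to compare against, and your construction is a correct, self-contained proof.

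The computations all check out. With $q'(r)=r\varphi(\log r)$ you have:
\begin{itemize}
\item $q'/r=\varphi(\log r)\in[0,1]$;
\item $q''=(\varphi+\varphi')(\log r)$;
\item $(q'/r)'=r^{-1}\varphi'(\log r)$;
\item via $\big(\frac{\vd^2}{\vd r^2}+\frac1r\frac{\vd}{\vd r}\big)[h(\log r)]=r^{-2}h''(\log r)$, the squared radial Laplacian of $q$ equals $r^{-2}(2\varphi''+\varphi''')(\log r)$.
\end{itemize}
Since $R>1$, the two transition regions of $\varphi$ are disjoint, so $\|\varphi^{(j)}\|_{L^\infty}=L^{-j}\|\psi^{(j)}\|_{L^\infty}$.

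One point is worth stating explicitly: the choice $L\ge\max\{1,C_\psi c^{-1}\}$, with $C_\psi$ large depending only on $\psi$. The condition $L\ge1$ makes $|q''|\le 1+\|\psi'\|_{L^\infty}$ uniform in $c$ and $R$ in (P3), and lets you absorb $L^{-3}\le L^{-2}\le L^{-1}$ in (P5). The condition $L\ge C_\psi c^{-1}$ then gives (P4)--(P6).

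Your closing remark can also be made quantitative. (P6) says $|\frac{\vd}{\vd s}(q'/r)|\le c$ in $s=\log r$, and $q'/r$ must fall from $1$ at $r=R$ to $0$ at $r=\wt R$. Hence $\log(\wt R/R)\ge c^{-1}$ for every admissible $q$. A slow transition over a logarithmic window of length $\gtrsim c^{-1}$ is therefore forced on any construction, including the cited one, and yours attains it with the optimal order $\wt R=Re^{O(1/c)}$.
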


\begin{defn}[Localized virial operator]
For each $\lam>0$ we set
\begin{align}
A(\lambda)g(r) &:= q'\big(\frac{r}{\lambda}\big)\cdot \p_r g(r), \label{eq:opA}\\
\uln A(\lambda)g(r) &:=\big(\frac{1}{2\lambda}q''\big(\frac{r}{\lambda}\big) + \frac{1}{2r}q'\big(\frac{r}{\lambda}\big)\big)g(r) + q'\big(\frac{r}{\lambda}\big)\cdot\p_r g(r). \label{eq:opA0}
\end{align}
These operators depend on $c$ and $R$ as in Lemma~\ref{lem:q}. 
\end{defn}

We also record some properties of $A(\lam)$ and $\uln A(\lam)$ which will be used in the proof of Lemma~\ref{lem:mod}. 

\begin{lem}[Localized virial estimates]  \label{lem:A} 
For any $c_0>0$ there exist $c_1, R_1>0$, so that for all $c, R$ as in Lemma~\ref{lem:q} with $c< c_1$, $R> R_1$ the operators $A(\lambda)$ and $\U A(\lambda)$ defined in~\eqref{eq:opA} and~\eqref{eq:opA0} have the following properties:

  \begin{itemize}[leftmargin=0.5cm]
    \item the families $\{A(\lambda): \lam > 0\}$, $\{\U A(\lambda): \lam> 0\}$, $\{\lambda\partial_\lambda A(\lambda): \lam > 0\}$
      and $\{\lambda\partial_\lambda \U A(\lambda): \lambda > 0\}$ are bounded in $\mathscr{L}(H; L^2)$, with the bound depending only on the choice of the function $q(r)$,
    \item  
   Let $g_1,g_2\in H $. Then, for all $\lambda > 0$, 
      \begin{multline}  \label{eq:A-by-parts}
      \Big| \ang{ A(\lambda)g_1\mid  \frac{k^2}{r^2}\big(f(g_1 + g_2) - f(g_1) - f'(g_1)g_2\big)}  \\ +\ang{ A(\lambda)g_2\mid \frac{k^2}{r^2}\big(f(g_1+g_2) - f(g_1) - g_2\big)}\Big| 
        \leq \frac{c_0}{\lambda} \|g_2\|_H^2.
      \end{multline}
    Here the expression inside the absolute value on the left-hand side is defined by the integration-by-parts formula:
    \begin{align}
&\left\langle A(\lambda)g_1 \,\middle|\,
\frac{k^2}{r^2}\big(f(g_1+g_2)-f(g_1)-f'(g_1)g_2\big)
\right\rangle \notag\\
&\quad+
\left\langle A(\lambda)g_2 \,\middle|\,
\frac{k^2}{r^2}\big(f(g_1+g_2)-f(g_1)-g_2\big)
\right\rangle \notag\\
\quad=&
-k^2\int_0^\infty
\left(\frac1r q'\left(\frac r\lambda\right)\right)'
\left[
F(g_1+g_2)-F(g_1)-f(g_1)g_2-\frac12 g_2^2
\right]\,dr .
\end{align}
    where $F(u)=\frac{1}{2}\sin^2u$,

    \item For all $g \in H$ we have  
\EQ{
        \label{eq:pohozaev}
        \ang{\U A(\lambda)g \mid \LL_0 g} \ge  - \frac{c_0}{\lambda}\|g\|_{H}^2 + \frac{1}{\lambda}\int_{R^{-1} \lam}^{R\lambda}  \Big((\partial_r g)^2 + \frac{k^2}{r^2}g^2\Big) \udr, 
        }
        here the left-hand side is defined by the integration-by-parts formula:
        \begin{align}
\la A(\lam)g \mid L_0 g \ra &= \frac{1}{\lam} \int_0^\infty q''\left(\frac{r}{\lam}\right) (\p_r g)^2 r \, \ud r \\
&\quad + k^2 \int_0^\infty q'\left(\frac{r}{\lam}\right) \frac{g^2}{r^2} \, \ud r \\
&\quad - \frac{1}{4} \int_0^\infty \left[ \p_r^2 + \frac{1}{r}\partial_r \right] \left[ \frac{1}{\lam}q''\left(\frac{r}{\lam}\right) + \frac{1}{r}q'\left(\frac{r}{\lam}\right) \right] g^2 r \, \ud r,
        \end{align}

        \item For $\lam, \mu >0$ with either $\lam/\mu \ll 1$ or $\mu/\lam \ll 1$, 
\begin{align} 
      \label{eq:L0-A0}
      \|\ULam \Lambda Q_\uln{\lambda} - \U A(\lambda)\Lambda Q_{\lambda}\|_{L^2} &\leq c_0, \\
      \label{eq:L-A}
      \|\big(\frac{1}{\lam} \Lambda  - A(\lambda)\big) Q_\lambda\|_{L^\infty} &\leq \frac{c_0}{\lambda},  \\
    \| A(\lam) Q_\mu \|_{L^\infty} + \| \U A(\lam) Q_\mu \|_{L^\infty}  &\lesssim  \frac{1}{\lam}  \min \{ (\lam/ \mu)^k,  (\mu/ \lam)^k \}  \label{eq:A-mismatch} \\
 \| A(\lam) Q_\mu \|_{L^2} + \| \U A(\lam) Q_\mu \|_{L^2} &\lesssim   \min \{ (\lam/ \mu)^k,  (\mu/ \lam)^k \}  \label{eq:A-mismatch-2}
     \end{align} 
     
     \item Lastly, the following localized coercivity estimate holds. Fix any smooth function $\calZ \in L^2 \cap X$ such that $\ang{\calZ \mid \Lam Q} >0$. For any $g \in H, \lam>0$ with $\ang{g \mid \calZ_{\U \lam}} = 0$, 
     \EQ{ \label{eq:coercive}
    \frac{1}{\lambda}\int_{R^{-1} \lam}^{R\lambda}  (\partial_r g)^2 + \frac{k^2}{r^2}g^2 \udr &+   \frac{1}{\lam} \int_{0}^\infty  \big(\frac{1}{2}q''\big(\frac{r}{\lambda}\big) + \frac{\lam}{2r}q'\big(\frac{r}{\lambda}\big)\big) \frac{k^2}{r^2} ( f'(Q_{\lam}) - 1)g^2  \, r \, \ud r  \\
    \ge - \frac{c_0}{\lam} \| g \|_{H}^2 .
     }
  \end{itemize}
\end{lem}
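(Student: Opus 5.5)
The statement to prove is Lemma~\ref{lem:A}, which is recorded from \cite{JL-WM}. The plan is to reduce every item to the properties (P1)--(P6) of $q=q_{c,R}$ in Lemma~\ref{lem:q}, together with explicit information on $Q$ and $\Lam Q$. By scaling, all items can be checked at $\lam=1$. For $\lam\neq1$, write $g(r)=\tilde g(r/\lam)$. Then $A(\lam)g=\lam^{-1}(A(1)\tilde g)_\lam$, and $\|\cdot\|_H$ is scale invariant, which accounts for the factors $\lam^{-1}$ in the statements.

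\emph{Boundedness.} By (P3) we have $|q'(r/\lam)|\lesssim r/\lam$ and $|q''|\lesssim1$. Hence
\[
|A(\lam)g|\lesssim \lam^{-1}|r\p_r g|, \qquad |\U A(\lam)g|\lesssim \lam^{-1}\big(|g|+|r\p_r g|\big).
\]
This looks like it only gives an $L^2$ bound of size $\|g\|_H$ after multiplying by $\lam$. To fix this I use (P2): $q'$ and $q''$ vanish outside $[\wt R^{-1}\lam,\wt R\lam]$. On that set $r/\lam\lesssim\wt R$, so $\|A(\lam)g\|_{L^2}\lesssim \wt R\|\p_r g\|_{L^2}$, with a constant depending on $q$ as claimed. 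The same computation with $\lam\p_\lam$ (which produces $q''$, $q'''$-type terms, still compactly supported) gives the bounds for $\lam\p_\lam A$ and $\lam\p_\lam\U A$.

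\emph{Estimate \eqref{eq:A-by-parts}.} Integrate by parts as indicated. The bracket
\[
F(g_1+g_2)-F(g_1)-f(g_1)g_2-\tfrac12 g_2^2
\]
is bounded by $\lesssim g_2^2$, since $|F''|\le1$ and $F''(0)=1$. By (P6), $|(q'(r/\lam)/r)'|\le c\,\lam^{-1}r^{-1}$. So the integral is bounded by $c\,\lam^{-1}\int g_2^2 r^{-2}\,r\vd r\le c\lam^{-1}\|g_2\|_H^2$, and we take $c<c_0$.

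\emph{Estimate \eqref{eq:pohozaev}.} Expand $\la\U A(\lam)g\mid\LL_0 g\ra$ by the stated formula. Three facts give the bound:
\begin{itemize}
\item On $[R^{-1}\lam,R\lam]$, (P1) gives $q''=1$ and $q'/r=1$, which produces the positive localized term.
\item Elsewhere, (P4) gives $q''\ge -c$ and $q'/r\ge -c$, so these contributions are $\ge -c\lam^{-1}\|g\|_H^2$.
\item The bi-Laplacian term is bounded by $c\,\lam^{-1}\int g^2r^{-2}\,r\vd r$ using (P5).
\end{itemize}

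\emph{Estimates \eqref{eq:L0-A0}--\eqref{eq:A-mismatch-2}.} Since $q'(r/\lam)=r/\lam$ on $[R^{-1}\lam,R\lam]$, the differences in \eqref{eq:L0-A0} and \eqref{eq:L-A} are supported where $r/\lam\notin[R^{-1},R]$. There $|\Lam Q|\lesssim\min(r^k,r^{-k})$ (at $\lam=1$). The resulting $L^2$ and $L^\infty$ norms are $O(R^{-k})$, up to the harmless factor coming from (P3). Taking $R$ large makes them $\le c_0$.

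For the mismatch estimates \eqref{eq:A-mismatch} and \eqref{eq:A-mismatch-2}, note that $A(\lam)Q_\mu=q'(r/\lam)\,\mu^{-1}(\Lam Q)_\mu/r\cdot r$. The factor $q'(r/\lam)$ localizes to $r\sim_{\wt R}\lam$, and there $|\Lam Q_\mu|\lesssim\min\{(\lam/\mu)^k,(\mu/\lam)^k\}$. The constant depends on $\wt R$; I accept this, as in \cite{JL-WM}, since the regime is $\lam/\mu\to0$ or $\infty$.

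\emph{Coercivity \eqref{eq:coercive}.} This is the main obstacle. I argue by contradiction and compactness, with $\lam=1$. Suppose there are $g_n$ with $\|g_n\|_H=1$ and $\la g_n\mid\calZ\ra=0$ such that the left side is $<-c_0$, for $c_n\to0$ and $R_n\to\infty$.

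\begin{itemize}
\item \emph{Localization of the potential term.} The weight $\tfrac12 q''+\tfrac1{2r}q'$ is bounded by (P3). The factor $f'(Q)-1$ decays like $\min(r^{2k},r^{-2k})$. So up to $o(1)$ the potential term equals $\int_{R^{-1}}^{R}\frac{k^2}{r^2}(f'(Q)-1)g^2\,r\vd r$, where the weight is exactly $1$ by (P1).
\item \emph{Reduction to a localized quadratic form.} The left side therefore becomes $\la \LL g, g\ra$ restricted to $[R^{-1},R]$, up to errors that vanish as $n\to\infty$.
\item \emph{Extraction of a limit.} Take a weak limit $g_n\rightharpoonup g$ in $H$. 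The potential part converges by compactness, since the potential decays at $0$ and $\infty$. Then $\la g\mid\calZ\ra=0$, and by lower semicontinuity $\la\LL g,g\ra\le -c_0<0$.
\item \emph{Contradiction.} This contradicts the coercivity of $\LL$ on $\{\calZ\}^\perp$. That coercivity holds because $\LL=-\De+k^2r^{-2}f'(Q)$ is nonnegative with kernel spanned by $\Lam Q$, and $\la\calZ\mid\Lam Q\ra>0$.
\end{itemize}

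The delicate point is controlling the truncated kinetic part. One must make sure that energy of $g_n$ escaping to $r\to0$ or $r\to\infty$ only contributes nonnegatively. This holds because outside $[R^{-1},R]$ the potential term is $o(1)\|g_n\|_H^2$, and the kinetic integral over $[R^{-1},R]$ is nonnegative, so any mass that escapes the window only makes the left side larger.
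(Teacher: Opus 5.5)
There is a real hole in your treatment of the mismatch estimates \eqref{eq:A-mismatch}--\eqref{eq:A-mismatch-2}. You only estimate $A(\lambda)Q_\mu$ and say nothing about $\underline{A}(\lambda)Q_\mu$, and that half cannot be proved as written. The operator $\underline{A}(\lambda)$ contains the zeroth-order multiplier $\frac{1}{2\lambda}q''(r/\lambda)+\frac{1}{2r}q'(r/\lambda)$, which by (P1) equals $1/\lambda$ on $[R^{-1}\lambda,R\lambda]$. Meanwhile $Q_\mu$ does not decay at infinity. If $\mu\ll R^{-1}\lambda$, then $Q_\mu\approx\pi$ on that interval, so $\underline{A}(\lambda)Q_\mu(\lambda)=\lambda^{-1}\big(Q(\lambda/\mu)+\Lambda Q(\lambda/\mu)\big)\to\pi/\lambda$ and $\|\underline{A}(\lambda)Q_\mu\|_{L^2}\gtrsim R$. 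The claimed bound $O\big((\mu/\lambda)^k\big)$ therefore fails in both norms. For $\lambda\ll\mu$ the bound does hold, since $|Q_\mu|\lesssim(\wt R\lambda/\mu)^k$ on the support of the multiplier. The defect is presumably inherited from the NLW setting, where the ground state decays. The correct statement for $\underline{A}$ has $\Lambda Q_\mu$ in place of $Q_\mu$. That is what the modulation argument actually uses, through the term $\underline{A}(\lambda_j)\Lambda Q_{\underline{\lambda_i}}$; in the paper, $Q_{\lambda_i}$ with $i\neq j$ only ever appears through $A(\lambda_j)Q_{\lambda_i}$. Your localization argument proves the $\Lambda Q_\mu$ version verbatim, since $\Lambda Q$ and $\Lambda\Lambda Q$ decay at both ends. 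You should have proved that version or flagged the claim, rather than passing over it.

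Your displayed formula for $A(\lambda)Q_\mu$ is also off. The correct identity is $\partial_r Q_\mu=r^{-1}(\Lambda Q)_\mu$. It is this $r^{-1}$, combined with $|q'(x)|\lesssim x$ from (P3), that produces the prefactor $\lambda^{-1}$; a prefactor $\mu^{-1}$ would be wrong for $\mu\ll\lambda$. Likewise, the tails in \eqref{eq:L0-A0} are $O(R^{1-k})$ in $L^2(r\,\mathrm{d}r)$, not $O(R^{-k})$; this is harmless only because $k\ge2$.

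Apart from these points, your route genuinely differs from the paper's and works. The paper imports the lemma from the two-bubble NLW paper and extends the second and third items from $X$ to $H$ by density. You instead verify everything from (P1)--(P6), and since those two pairings are defined by the integration-by-parts formulas, your pointwise bounds need no density step. Your compactness proof of \eqref{eq:coercive} is also correct. It in fact only uses $\langle(-\Delta+k^2r^{-2}f'(Q))g\mid g\rangle=\|(-\partial_r+\tfrac{k}{r}\cos Q)g\|_{L^2}^2\ge0$ on all of $H$. So the orthogonality to $\mathcal{Z}$ plays no role. Neither does the weak continuity of $g\mapsto\langle g\mid\mathcal{Z}\rangle$ on $H$, which fails for $\mathcal{Z}=\Lambda Q$ when $k=2$.
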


\begin{proof}
    The first property and the last two properties were proved in  Lemma 5.5 in \cite{J-NLW-two-bubble}. As for the second and the third properties, Lemma 5.5 in \cite{J-NLW-two-bubble} presents proofs for functions   $g_1,g_2,g$ in the space   
    \begin{align}
        X:=\{g\in H:\frac{g}{r},\p_r g\in H\}.
    \end{align}
    One can easily extend the results to the case when $X$ is replaced by $H$ using density arguments. Notice that the two pairings in the second property are not used separately for general $g_1,g_2\in H$; only their sum is defined through the displayed integration-by-parts formula.
\end{proof}

For any finite energy blow-up solution to ~\eqref{WM}, we consider the new solution $\vec u$ given by Lemma~\ref{lem:small-radiation-valid}.

\begin{lem}\label{lem:mod}
    For any $c_0>0$, there exists $\de>0$ such that for $t\in(T_+-\de,T_+)$ we have the following decomposition
    \begin{align}
        \vec u(t)=\sum_{j=1}^N\iota_j\vec Q_{\lam_j(t)}+\vec u^*(t)+\vec g(t),
    \end{align}
    where $\vec u^*(t)$ is the solution to ~\eqref{WM} with initial data $\vec u_0^*(r)$ at time $T_+$. 
    
    The decomposition has the following properties:
    \begin{align}\label{eq:mod-parameter-same}
        \left|\frac{\lam_j(t)}{\mu_j(t)}-1\right|\leq \frac{1}{100},
    \end{align}
    \begin{align}\label{eq:ortho-1}
        \lim_{s\to T_+}\left(\sum_{j=1}^{N-1}\frac{\lam_j(s)}{\lam_{j+1}(s)}+\frac{\lam_N(s)}{T_+-s}+\|\vec g(s)\|_{\E}\right)=0,
    \end{align}
    \begin{align}\label{eq:lam'}
        |\lam_j'(t)|\lesssim\|\dot g\|_{L^2},\ \forall\ 1\leq j\leq N,
    \end{align}
    \begin{align}\label{eq:ortho}
         \La\cZ_{\U{\lam_j(t)}},g(t)\Ra=0,\ \forall\ 1\leq j\leq N,
    \end{align}
    here $\cZ(r)$ denotes the truncated version of the zero energy eigenfunction $\Lam Q$ for $-\De+\frac{f'(Q)}{r^2}$:
    \begin{align}
        \cZ(r) :=
    \begin{cases}
    \Lam Q(r), & \text{if } k \ge 3, \\
    \chi(r)\Lam Q(r),
    & \text{if } k=2.
    \end{cases}
    \end{align}
   
    Moreover, if we introduce the corrections $\xi_j(t)$ for $\lam_j(t)$ and $\be_j(t)$ for $\lam_j'(t)$ defined as follows:
    
\begin{align} \label{eq:xi-def}
        \xi_j(t) :=
    \begin{cases}
    \lam_j(t), & \text{if } k \ge 3, \\
    \lam_j(t)
    -\dfrac{\iota_j}{\|\Lam Q\|_{L^2}^2}
    \left\langle
    \chi_{L \lam_j(t)}\Lam Q_{\U{\lam_j(t)}}
    \,\middle|\, g(t)
    \right\rangle,
    & \text{if } k=2,
    \end{cases}
\end{align}
    \begin{align} \label{eq:beta-def}
    \be_j(t)=-\frac{\iota_j}{\|\Lam Q\|_{L^2}^2}\La \Lam Q_{\U{\lam_j(t)}},\dot g\Ra-\frac{1}{\|\Lam Q\|_{L^2}^2}\La \uln A(\lam_j(t))g(t),\dot g(t)\Ra,
\end{align}
    then we have
    \begin{align}\label{eq:correction-lam}
        \left|\frac{\xi_j(t)}{\lam_j(t)}-1\right|\leq c_0,\ \forall \ 1\leq j\leq N-1,
    \end{align}
    \begin{align}\label{eq:correction-lam''}
         |\xi_j'(t)-\be_j(t)|\lesssim c_0\bfd(t), \ \forall\ 1\leq j\leq N-1,
    \end{align}
    \begin{align}\label{eq:beta-upper}
        |\be_j(t)|\lesssim \bfd(t), \ \forall\ 1\leq j\leq N-1,
    \end{align}
    \begin{align}\label{eq:beta'}
        \lambda_j(t)\beta_j'(t)\geq \left(-\iota_j\iota_{j+1}\om^2-c_0\right)\left(\frac{\lam_{j}(t)}{\lam_{j+1}(t)}\right)^k+\left(\iota_j\iota_{j-1}\om^2-c_0\right)\left(\frac{\lam_{j-1}(t)}{\lam_{j}(t)}\right)^k-c_0\bfd^2(t),  \ \forall\ 1\leq j\leq N-1,
    \end{align}
    with the convention that $\lam_0=0$.

Here the function $\bfd(t)$ is defined as 
\begin{align}
    \bfd(t)=\sum_{j=1}^{N-1}\left(\frac{\lam_j(t)}{\lam_{j+1}(t)}\right)^{\frac{k}{2}}+\lam_N(t)+\|\vec g\|_{\E},
\end{align}
and the constant $\om^2$ is defined by
\begin{align}
    \om^2=\om^2(k)=4k^2\pi^{-1}\sin\frac{\pi}{k}.
\end{align}

\end{lem}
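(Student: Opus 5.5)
The plan is to follow the modulation scheme of \cite[Section 5]{JL-WM} and \cite{J-NLW-two-bubble}, now with the radiation $\vec u^*(t)$ subtracted, so that the new error terms all come from interactions between $u^*$ and the bubbles. These are controlled by $F_0$, $\ti F_0$ and the pointwise bound \eqref{eq:sin-decouple-1}.

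\textbf{Step 1: the decomposition.} Starting from the decomposition of Lemma~\ref{lem:small-radiation-valid}, we apply the implicit function theorem near the multi-bubble configuration $\sum_j\iota_j\vec Q_{\mu_j}+\vec u^*(t)$. This yields parameters $\lam_j(t)$ that enforce the orthogonality \eqref{eq:ortho} and satisfy \eqref{eq:mod-parameter-same}. The Jacobian is nondegenerate because $\la\cZ,\Lam Q\ra>0$, and the cross terms $\la\cZ_{\U{\lam_i}},\Lam Q_{\lam_j}\ra$ for $i\neq j$ are small by scale separation. The smallness \eqref{eq:ortho-1} then follows from \eqref{eq:ortho-lem}, together with $\vec u^*(t)\to\vec u^*_0$ in $\E$ (so $u^*(t)$ replaces $u^*_0$ up to $o(1)$). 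The bound $\lam_N/(T_+-s)\to0$ comes from Theorem~\ref{thm:src-blowup}.

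\textbf{Step 2: first-order modulation equations.} We differentiate \eqref{eq:ortho} in time. Since $\vec u^*$ solves \eqref{WM} itself, the equation for $\vec g$ is $\p_t g=\dot g+\sum_j\iota_j\lam_j'\Lam Q_{\U{\lam_j}}$ (up to sign convention). Pairing with $\cZ_{\U{\lam_j}}$ gives a nearly diagonal linear system for $(\lam_j')$, from which \eqref{eq:lam'} follows.

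For $k=2$, where $\Lam Q\notin L^2$, we use the corrected parameter $\xi_j$ as in \cite{JL-WM}. The estimate \eqref{eq:correction-lam} follows from $|\la\chi_{L\lam}\Lam Q_{\U\lam},g\ra|\lesssim \lam\sqrt{\log L}\,\|g\|_H$, choosing $L$ and then $\de$ small relative to $c_0$. Computing $\xi_j'$ and comparing with $\be_j$ gives \eqref{eq:correction-lam''}. The difference consists of terms $\la\chi_{L\lam_j}\Lam Q_{\U{\lam_j}},\dot g\ra$ versus the full pairing, and $\la\U A(\lam_j)g,\dot g\ra$-type terms, bounded by $c_0\|\vec g\|_\E$. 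The bound \eqref{eq:beta-upper} is immediate from Cauchy--Schwarz and Lemma~\ref{lem:A}.

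\textbf{Step 3: the virial estimate \eqref{eq:beta'}.} This is the main obstacle. We differentiate $\be_j$ and substitute $\p_t\dot g=-\vD E_{\bfp}(u)+\vD E_{\bfp}(u^*)+\sum_i\iota_i\lam_i'\dots$. We expand $\vD E_{\bfp}(u)-\vD E_{\bfp}(u^*)-\sum_i\iota_i\vD E_{\bfp}(Q_{\lam_i})=\LL g+\frac{k^2}{r^2}\big(\text{interaction}\big)+\text{nonlinear}(g)$. The bubble--bubble interaction paired with $\Lam Q_{\U{\lam_j}}$ produces the leading terms $\mp\om^2(\lam_j/\lam_{j+1})^k$ and $\pm\om^2(\lam_{j-1}/\lam_j)^k$, exactly as in \cite[Lemma 5.8]{JL-WM}. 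The quadratic terms in $g$ are handled by \eqref{eq:pohozaev}, \eqref{eq:coercive}, and \eqref{eq:A-by-parts}, giving $\ge -c_0\bfd^2/\lam_j$ after using \eqref{eq:ortho}.

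The new terms are the radiation--bubble interactions. By \eqref{eq:sin-decouple-1} they are bounded by $\lam_j^{-1}\int \frac{k^2}{r^2}F_0F_j|\Lam Q_{\lam_j}|\,r\,dr\lesssim \lam_j^{-1}F_0(\lam_j)$, i.e.\ roughly $\lam_j(1+|\log\lam_j|^{1/2})$ for $k=2$ and $\lam_j$ for $k\ge3$ (the Appendix integrals). Similarly, the terms involving $\U A(\lam_j)$ applied to $u^*$-dependent potentials $\frac{k^2}{r^2}(f'(u^*+\cdots)-f'(\cdots))g$ are bounded via $|u^*|\lesssim F_0$ and $|\p_tu^*|\lesssim\ti F_0$ by $\lam_j^{-1}F_0(\lam_j)\|g\|_H$.

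Multiplying by $\lam_j$, we must show these errors are $\le c_0\bfd^2$. For $j\le N-1$ one has $\lam_j\le(\lam_j/\lam_N)\lam_N$. The quantity $\lam_j F_0(\lam_j)/\lam_j$ is quadratic-like, of order $\lam_j^2$ up to the logarithm, and $\lam_j^{2}(1+|\log\lam_j|^{1/2})\le c_0\lam_N^2\le c_0\bfd^2$ because $\lam_j/\lam_N\to0$ beats the logarithm; cross terms with $\|g\|_H$ are handled by AM--GM. The delicate point I expect is precisely this bookkeeping: every radiation term must carry at least one extra small factor $\lam_j/\lam_N$ or $c_0$. This is why \eqref{eq:beta'} is only claimed for $j\le N-1$ and why $\lam_N$ is included in $\bfd$. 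Finally, the terms $\lam_i'\la\dots\ra$ arising from differentiating the scales are bounded by $\|\dot g\|\cdot c_0\bfd$ via \eqref{eq:L0-A0}--\eqref{eq:A-mismatch-2}.
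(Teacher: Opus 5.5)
Your strategy is the paper's. You obtain the decomposition from the implicit function theorem. You note that $\p_t g=\dot g+\sum_j\iota_j\lam_j'\Lam Q_{\U{\lam_j}}$ contains no $\p_t u^*$, so \eqref{eq:lam'}--\eqref{eq:beta-upper} carry over from \cite{JL-WM}. For \eqref{eq:beta'} you isolate the new radiation terms, bound them with the pointwise interaction estimates and the Appendix integrals, and absorb them into $c_0\bfd^2$ using $j\le N-1$.

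\textbf{The gap.} The absorption step, which you flag yourself as the delicate point, rests on a false claim when $k=2$. You assert $\lam_j^2(1+|\log\lam_j|)^{1/2}\le c_0\lam_N^2$ because $\lam_j/\lam_N\to0$ beats the logarithm. It does not: the ratio may tend to zero arbitrarily slowly while $\lam_j\to0$, and no a priori rate is available at this stage. For instance, $\lam_j=\lam_N/\log|\log\lam_N|$ gives $(\lam_j/\lam_N)^2|\log\lam_j|^{1/2}\to\infty$. So $\lam_N^2$ alone cannot absorb this error. Since the cross terms $\lam_l^2(1+|\log\lam_j|)^{1/2}$ ($l<j$) and $\lam_j^2(1+|\log\lam_l|)^{1/2}$ ($j<l<N$) reduce to the same quantity, this is the crux of the radiation bookkeeping.

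\textbf{The fix.} You must also use the bubble-ratio component of $\bfd$. The paper does this with Young's inequality:
\[
\lam_j^2(1+|\log\lam_j|)^{\frac12}=\lam_N^{\frac32}\Big(\frac{\lam_j}{\lam_N}\Big)^{\frac32}\lam_j^{\frac12}(1+|\log\lam_j|)^{\frac12}\lesssim\lam_N^3+\Big(\frac{\lam_j}{\lam_N}\Big)^3\lam_j(1+|\log\lam_j|).
\]
Both terms are $o(1)\bfd^2$: first, $\lam_N^3=o(1)\lam_N^2$; second, for $k=2$ one has $(\lam_j/\lam_N)^3\le(\lam_{N-1}/\lam_N)^2\le\bfd^2$, while $\lam_j(1+|\log\lam_j|)\to0$. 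So the small factor doing the work is $\lam_j^{1/2}|\log\lam_j|^{1/2}$, not $\lam_j/\lam_N$. The remaining case $l=N$ gives $(\lam_j/\lam_N)^2\lam_N^2|\log\lam_N|^{1/2}=o(1)\bfd^2$ directly. For $k\ge3$ there is no logarithm and your argument is fine.

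\textbf{Minor points.}
\begin{itemize}
\item For $k=2$ one has $\Lam Q\in L^2$; otherwise the normalization $\|\Lam Q\|_{L^2}^2$ in $\be_j$ would be meaningless. What fails is $r\Lam Q\in L^2$, i.e. boundedness of $g\mapsto\la\Lam Q_{\U\lam},g\ra$ on $H$. That failure is exactly the source of your $\sqrt{\log L}$.
\item $\p_t\dot g=-\vD E_{\bfp}(u)+\vD E_{\bfp}(u^*)$ contains no $\lam_i'$ terms; those enter only through $\p_t g$.
\item The bound $|\p_t u^*|\lesssim\ti F_0$ plays no role in this lemma. It is only needed in Lemma~\ref{lem:F'-estimate}.
\end{itemize}
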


\begin{rem}
    The function $\bfd(t)$ can be understood intuitively in the following way:  the sum reflects interactions between the bubbles, and the second term the interaction of the widest bubble with radiation. The definition of $\bfd(t)$ collects the quantities whose squares enter the energy and modulation estimates at the same level, as shown in the proof.
\end{rem}

\begin{proof}
    The decomposition and ~\eqref{eq:mod-parameter-same}, ~\eqref{eq:ortho-1},  ~\eqref{eq:ortho}  are guaranteed by Theorem~\ref{thm:src-blowup} and the implicit function theorem.

    In the rest of the proof, we often omit the variables $t$ and $r$ when no
confusion is possible. For instance, $Q_{\lam_j}$, $u^*$, $g$, and
$\lam_j$ stand for $Q_{\lam_j(t)}(r)$, $u^*(t,r)$, $g(t,r)$, and
$\lam_j(t)$, respectively. Static objects such as  $Q$, $\Lambda Q$, and
$A(\lambda)$ with a fixed parameter $\lambda>0$ are not affected by this
convention. We will restore the variables whenever ambiguity may arise.
    
    We first consider ~\eqref{eq:lam'}, ~\eqref{eq:correction-lam}, ~\eqref{eq:correction-lam''} and ~\eqref{eq:beta-upper}.
    
    A basic calculation yields
    \begin{align}\label{eq:p_tg}
        \p_t g(t)&= \p_t u(t)-\p_t u^*(t)+\sum_{j=1}^N \iota_j\lam_j'(t)\Lam Q_{\U{\lam_j(t)}}=\dot g+\sum_{j=1}^N \iota_j\lam_j'(t)\Lam Q_{\U{\lam_j(t)}},
    \end{align}
    which implies that, when differentiating the orthogonality conditions ~\eqref{eq:ortho}, no additional term involving $\partial_tu^*$ appears. Therefore, the proofs of ~\eqref{eq:lam'}, ~\eqref{eq:correction-lam}, ~\eqref{eq:correction-lam''}, and~\eqref{eq:beta-upper}  are  almost the same as those in Lemma 4.11 and Lemma 4.16 in \cite{JL-WM} . We omit the details.

    Now we consider  ~\eqref{eq:beta'}. We calculate
    \begin{equation}\label{eq:p_tdotg}
        \begin{aligned}[t]
            \p_t \dot g(t)=&-\frac{k^2}{r^2}f\left(\sum_{l=1}^{N}\iota_l Q_{\lambda_l(t)}+u^*(t)+g(t) \right)+\frac{k^2}{r^2}f\left(\sum_{l=1}^{N}\iota_l Q_{\lambda_l(t)}+u^*(t)\right)\\
            \MoveEqLeft[-3] +\frac{k^2}{r^2}f'\left(\sum_{l=1}^{N}\iota_l Q_{\lambda_l(t)}+u^*(t)\right)g(t)\\
        &-\frac{k^2}{r^2}\left(f'\left(\sum_{l=1}^{N}\iota_l Q_{\lambda_l(t)}+u^*(t)\right)- f'\left(\sum_{l=1}^{N}\iota_l Q_{\lambda_l(t)}\right)\right)g(t)\\
        &-\frac{k^2}{r^2}f'\left(\sum_{l=1}^{N}\iota_l Q_{\lambda_l(t)}\right)g(t)+\De g(t)\\
        &-\frac{k^2}{r^2}f\left(\sum_{j=1}^{N}\iota_j Q_{\lam_j(t)}+u^*(t)\right)+\frac{k^2}{r^2}f\left(\sum_{j=1}^{N}\iota_j Q_{\lam_j(t)}\right)+\frac{k^2}{r^2}f\left(u^*(t)\right)\\
        &-\frac{k^2}{r^2}f\left(\sum_{j=1}^{N}\iota_j Q_{\lam_j(t)}\right)+\frac{k^2}{r^2}\sum_{j=1}^{N}\iota_jf(Q_{\lam_j(t)}).
        \end{aligned}
    \end{equation}

To simplify the notation, we introduce the following quantities.
    \begin{align}
        &\calL_{\calQ}g=-\De g+\frac{k^2}{r^2}f'\left(\sum_{l=1}^{N}\iota_l Q_{\lambda_l(t)}\right)g,  \label{eq:def-LQ-g}\\
        &\phi_{rg}(t)=-\frac{k^2}{r^2}\left(f'\left(\sum_{l=1}^{N}\iota_l Q_{\lambda_l(t)}+u^*(t)\right)- f'\left(\sum_{l=1}^{N}\iota_l Q_{\lambda_l(t)}\right)\right)g(t), \label{eq:def-phi-rg}\\
        &\phi_{rb}(t)=-\frac{k^2}{r^2}f\left(\sum_{l=1}^{N}\iota_l Q_{\lambda_l(t)}+u^*(t)\right)+\frac{k^2}{r^2}f\left(\sum_{l=1}^{N}\iota_l Q_{\lambda_l(t)}\right)+\frac{k^2}{r^2}f\left(u^*(t)\right), \label{eq:def-phi-rb}\\
        &\phi_{bb}(t)=-\frac{k^2}{r^2}f\left(\sum_{l=1}^{N}\iota_l Q_{\lambda_l(t)}\right)+\sum_{l=1}^{N}\iota_l \frac{k^2}{r^2}f(Q_{\lam_l(t)}), \label{eq:def-phi-bb}\\
        &\phi_q(t)=-\frac{k^2}{r^2}f\left(\sum_{l=1}^{N}\iota_l Q_{\lambda_l(t)}+u^*(t)+g(t) \right)+\frac{k^2}{r^2}f\left(\sum_{l=1}^{N}\iota_l Q_{\lambda_l(t)}+u^*(t)\right)\\
        \MoveEqLeft[-3.4]+\frac{k^2}{r^2}f'\left(\sum_{l=1}^{N}\iota_l Q_{\lambda_l(t)}+u^*(t)\right)g(t), \label{eq:def-phi-q}\\
        &\ti \phi_{q}(t)=-\frac{k^2}{r^2}f\left(\sum_{l=1}^{N}\iota_l Q_{\lambda_l(t)}+u^*(t)+g(t) \right)+\frac{k^2}{r^2}f\left(\sum_{l=1}^{N}\iota_l Q_{\lambda_l(t)}+u^*(t)\right)+\frac{k^2}{r^2}g. \label{eq:def-ti-phi-q}
    \end{align}

    The subscript $r$ above stands for ``radiation'', $b$ stands for ``bubble'' and $q$ stands for ``quadratic''. 

    With this notation, we write
    \begin{align}
        \p_t \dot g&= -\calL_{\calQ}g+\phi_q+\phi_{rb}+\phi_{bb}+\phi_{rg}\\
        &=-\cL_0 g+\ti \phi_q+\phi_{rb}+\phi_{bb},
    \end{align}
    where 
    \begin{align}
        \cL_0 g=-\De g+\frac{k^2}{r^2}g.
    \end{align}

Differentiating \eqref{eq:beta-def} and using the equation for $\p_t\dot g$, we obtain
      \begin{align}\label{eq:beta'-1} 
\| \Lam Q \|_{L^2}^2 \beta_j' =&\iota_j  \frac{\lam_j'}{\lam_j} \La \ULam \Lam Q_{\U{\lam_j}} \mid \dot g \Ra  - \iota_j \La \Lam Q_{\U{\lam_j}} \mid \p_t  \dot g \Ra\\
&-\frac{\lam_j'}{\lam_j} \ang{ \lam_j \p_{\lam_j} \uln A( \lam_j) g \mid \dot g} -    \ang{ \uln A( \lam_j) \p_t g \mid \dot g} -     \ang{ \uln A( \lam_j) g \mid \p_t \dot g}\\
=&\iota_j\frac{\lam_j'}{\lam_j} \La \ULam \Lam Q_{\U{\lam_j}} \mid \dot g \Ra -\frac{\lam_j'}{\lam_j} \ang{ \lam_j \p_{\lam_j} \uln A( \lam_j) g \mid \dot g}\\ &-\iota_j\La\Lam Q_{\U{\lam_j}} \mid -\calL_{\calQ}g+\phi_q+\phi_{rb}+\phi_{bb}+\phi_{rg}\Ra\\
        &-\ang{ \uln A( \lam_j) g \mid  -\cL_0 g+\ti\phi_q+\phi_{rb}+\phi_{bb}}\\
        &-\iota_j \lam'_j(t)\la \uln A( \lam_j)\Lam Q_{\U {\lam_j(t)}}  \mid \dot g\ra-\sum_{l\neq j}\iota_l \lam'_l(t)\la \uln A( \lam_j)\Lam Q_{\U {\lam_l(t)}}  \mid \dot g\ra 
\end{align}

We rearrange the terms into

\begin{align}
    \| \Lam Q \|_{L^2}^2 \beta_j' =&  -  \frac{\iota_j }{\lam_j}  \ang{ \Lam Q_{\lam_j} \mid \phi_{bb} } +  \ang{ \uln A( \lam_j) g \mid \LL_0 g } \\
& + \ang{ (A(\lam_j) - \uln A( \lam_j)) g \mid \ti  \phi_q} \\
&+\iota_j \ang{ \Lam Q_{\U{\lam_j}} \mid ( \LL_{\calQ} - \LL_{\lam_j}) g} + \iota_j \frac{\lam_j' }{\lam_j}\ang{ \big( \frac{1}{\lam_j} \ULam - \U{A}( \lam_j) \big) \Lam Q_{\lam_j} \mid \dot g} \\
&   - \ang{ {A}(\lam_j) \left(\sum_{i =1}^N \iota_i Q_{\lam_i} +u^*\right)\mid \phi_q}   - \ang{ A( \lam_j) g \mid  \ti\phi_q} \\
&   + \iota_j \ang{ ({A}( \lam_j) -  \frac{1}{\lam_j}\Lam) Q_{\lam_j} \mid \phi_q}  -    \frac{\lam_j'}{\lam_j} \ang{ \lam_j \p_{\lam_j} \uln A( \lam_j) g \mid \dot g} \\
& +\ang{ {A}(\lam_j) u^*\mid \phi_q} 
\\
& + \sum_{ i \neq j} \iota_i \ang{ {A}(\lam_j) Q_{\lam_i} \mid \phi_q} \\ 
& - \sum_{i \neq j} \iota_i \lam_{i}'  \ang{ \uln A( \lam_j) \Lam Q_{\U{\lam_i}} \mid \dot g}   - \ang{ \uln A( \lam_j) g \mid \phi_{bb} } \\
&-\iota_j\La\Lam Q_{\U{\lam_j}} \mid \phi_{rb}+\phi_{rg}\Ra\\
        &-\ang{ \uln A( \lam_j) g \mid  \phi_{rb}}.
\end{align}

The difference between the equation above and that in \cite{JL-WM} is that we have to consider the terms containing radiation, i.e. the terms containing $\phi_q,\ti\phi_q,\phi_{rb},\phi_{rg}$.

\emph{Estimates of the terms involving  $\phi_q$, $\ti\phi_q$:} The estimates are the same as those for $f_{\bfq}$ and $\ti f_{\bfq}$ in \cite{JL-WM}, and we record them for the reader's convenience.

We first notice that by Taylor's expansion we have the pointwise bound $|\phi_q(t,r)|\lesssim \frac{|g(t,r)|^2}{r^2}$, which implies
\begin{align}
    \label{eq:phi-q-est} 
\| \phi_q  \|_{L^1} \lesssim \| g \|_{H}^2. 
\end{align}

We treat the terms $\ang{ \uln A( \lam_j) g \mid \LL_0 g } $ and $\ang{ (A(\lam_j) - \uln A( \lam_j)) g \mid \ti  \phi_q} $ together. First, ~\eqref{eq:pohozaev} yields 
\begin{align}
    \ang{ \uln A( \lam_j) g \mid \LL_0 g } \ge -\frac{c_0}{\lam_j} \| g \|_{H}^2 + \frac{1}{\lam_j} \int_{R^{-1} \lam_j}^{R \lam_j} \Big( ( \p_r g)^2 + \frac{k^2}{r^2} g^2  \Big) \, r \ud r.
\end{align}

To treat the term $\ang{ (A(\lam_j) - \uln A( \lam_j)) g \mid \ti  \phi_q} $, we start by combining the definition ~\eqref{eq:def-phi-q} with ~\eqref{eq:def-ti-phi-q} to observe the identity, 
\begin{align}\label{eq:ti-phi-exp} 
    \ti  \phi_{q}%
& = - \frac{k^2}{r^2}( f'(Q_{\lam_j}) - 1)g  - \frac{k^2}{r^2} \left( f'\left( \sum_{l=1}^N \iota_lQ_{\lam_l}+u^*\right) - f'(Q_{\lam_j})\right)g + \phi_q
\end{align}

Next, by definition,  
\begin{align}
    (A( \lam_j) - \U A( \lam_j)) g = - \frac{1}{\lam_j}  \big(\frac{1}{2}q''\big(\frac{r}{\lambda_j}\big) + \frac{\lam_j}{2r}q'\big(\frac{r}{\lambda_j}\big)\big)g
\end{align}

The contributions of the last two terms in ~\eqref{eq:ti-phi-exp} yield acceptable errors. Indeed, 
\begin{align}
    \big| \La (A( \lam_j) - \U A( \lam_j)) g & \mid  \frac{k^2}{r^2} \left( f'\left( \sum_{l=1}^N \iota_lQ_{\lam_l}+u^*\right) - f'(Q_{\lam_j})\right)g \Ra \Big| \\
& \lesssim \frac{1}{\lam_j} \int_{\ti  R^{-1} \lam_j}^{\ti R \lam_j} g^2  \abs{ f'\left( \sum_{l=1}^N \iota_lQ_{\lam_l}+u^*\right) - f'(Q_{\lam_j}))} \, \frac{\ud r}{r}  \le c_0 \frac{ \| g\|_H^2}{\lam_j}
\end{align}

with $\ti R$ as in Lemma~\ref{lem:q}, and by ~\eqref{eq:phi-q-est} and the definition of $q$ from Lemma~\ref{lem:q}, 
\begin{align}
    \abs{\ang{(A( \lam_j) - \U A( \lam_j)) g  \mid  \phi_q }} \lesssim \frac{1}{\lam_j} \|g \|_{L^\infty} \| g \|_H^2  \le c_0 \frac{ \| g\|_H^2}{\lam_j}
\end{align}
Putting this together, we obtain
\begin{align}
    \Big|  \La (A(\lam_j) - \uln A( \lam_j)) g \mid \ti  \phi_{q}\Ra &-  \frac{1}{\lam_j} \int_{0}^\infty  \big(\frac{1}{2}q''\big(\frac{r}{\lambda_j}\big) + \frac{\lam_j}{2r}q'\big(\frac{r}{\lambda_j}\big)\big) \frac{k^2}{r^2} ( f'(Q_{\lam_j}) - 1)g^2  \, r \, \ud r \Big| \\
&\lesssim c_0 \frac{ \| g\|_H^2}{\lam_j}. 
\end{align}

We show that the remaining terms containing $\phi_q$ and $\ti \phi_q$ contribute acceptable errors.

We now estimate the term $- \ang{ {A}(\lam_j) \left(\sum_{i =1}^N \iota_i Q_{\lam_i} +u^*\right)\mid \phi_q}   - \ang{ A( \lam_j) g \mid  \ti\phi_q}$. Applying ~\eqref{eq:A-by-parts}  with $g_1=\sum_{i =1}^N \iota_i Q_{\lam_i} +u^*$ and $g_2=g$, we obtain
\begin{align}
    &\left|\ang{ {A}(\lam_j) \left(\sum_{i =1}^N \iota_i Q_{\lam_i} +u^*\right)\mid \phi_q}   + \ang{ A( \lam_j) g \mid  \ti\phi_q} \right|
    \leq  2c\int_{\ti R^{-1} \lam_j}^{\ti R \lam_j} \frac{1}{\lam_j r} |g(t,r)|^2 \ud r\leq \frac{c_0}{\lam_j}\|g\|_H^2
\end{align}
if we choose $c$ small enough.

By~\eqref{eq:L-A} and ~\eqref{eq:phi-q-est}, we see that
\begin{align}
    \abs{ \ang{ ({A}( \lam_j) -  \frac{1}{\lam_j}\Lam) Q_{\lam_j} \mid \phi_q}     } \lesssim \frac{c_0}{ \lam_j}  \| g \|_{H}^2 . 
\end{align}

Next, using the boundedness of $A(\lam)$ and $|\phi_q(t,r)|\lesssim \frac{|g(t,r)|^2}{r^2}$, we obtain
\begin{align}
    \abs{\ang{ {A}(\lam_j) u^*\mid \phi_q} }\leq & \|A(\lam_j)u^*\|_{L^2}\|\phi_q\|_{L^2(\ti R^{-1}\lam_j,\ti R\lam_j)}\\
    \lesssim &  \|u^*\|_{H} \left(\int_{\ti R^{-1}\lam_j}^{\ti R\lam_j} |\phi_q|^2\udr\right)^{\frac{1}{2}}\\
    \lesssim & \|u^*\|_{H} \left(\int_{\ti R^{-1}\lam_j}^{\ti R\lam_j} \frac{|g(t,r)|^4}{r^4}\udr\right)^{\frac{1}{2}}\\
    \lesssim & \|u^*\|_{H} \left(\int_{\ti R^{-1}\lam_j}^{\ti R\lam_j} \frac{|g(t,r)|^2}{r^4}\udr\right)^{\frac{1}{2}}\| g\|_{H}\\
    \lesssim & \|u^*\|_{H} \left(\int_{\ti R^{-1}\lam_j}^{\ti R\lam_j} \frac{|g(t,r)|^2}{r^2}\udr\right)^{\frac{1}{2}}\frac{\ti R}{\lam_j}\| g\|_{H}\\
    \lesssim & \frac{c_0}{ \lam_j}  \| g \|_{H}^2 . 
\end{align}

Finally, combining~\eqref{eq:A-mismatch} with~\eqref{eq:phi-q-est}, we obtain
\begin{align}
    \Big| \sum_{ i \neq j} \iota_i \ang{ {A}(\lam_j) Q_{\lam_i} \mid  \phi_q}  \Big| \lesssim \frac{c_0}{ \lam_j}  \| g \|_{H}^2 .
\end{align}

\emph{Estimates of the terms involving $\phi_{rb}$:} We first estimate the term $\La\Lam Q_{\U{\lam_j}} \mid \phi_{rb}\Ra$. 

We begin with the pointwise estimate
\begin{align}
    |\phi_{rb}|=&\left|\frac{k^2}{r^2}f\left(\sum_{l=1}^{N}\iota_l Q_{\lam_l(t)}+u^*(t)\right)-\frac{k^2}{r^2}f\left(\sum_{l=1}^{N}\iota_l Q_{\lam_l(t)}\right)-\frac{k^2}{r^2}f\left(u^*(t)\right) \right|\\
    \lesssim &\frac{1}{r^2}\left|\sin(2u^*(t))\left(\cos \left( 2\sum_{l=1}^{N}\iota_l Q_{\lam_l(t)}\right)-1\right)\right|+\frac{1}{r^2}\left|\sin\left(2\sum_{l=1}^{N}\iota_l Q_{\lam_l(t)}\right)(\cos (2u^*(t))-1) \right|\\
    \lesssim & \frac{1}{r^2}\sum_{l=1}^NF_0(r)F_{l}^2(t,r)+\frac{1}{r^2}\sum_{l=1}^NF_0^2(r)F_{l}(t,r).
\end{align}

Since $|\lam_j(t)\Lam Q_{\U {\lam_j(t)}}(r)|\lesssim F_j(t,r)$,  using ~\eqref{eq:appendix-2}, ~\eqref{eq:appendix-3}, ~\eqref{eq:appendix-4} and ~\eqref{eq:appendix-8} we have for $k=2$,
\begin{align}
    \left|\lam_j\La\Lam Q_{\U{\lam_j}} \mid \phi_{rb}\Ra\right|\lesssim & \sum_{l=1}^N\int_0^{\infty}F_j(t,r)F_0(r)F_{l}^2(t,r)\frac{\ud r}{r}+\sum_{l=1}^N\int_0^{\infty}F_j(t,r)F_0^2(r)F_{l}(t,r)\frac{\ud r}{r}\\
    \lesssim & \int_0^{\infty} F_0(r)F_j^3(t,r)\frac{\ud r}{r}+\sum_{l\neq j}\int_0^{\infty}F_j(t,r)F_0(r)F_{l}(t,r)\frac{\ud r}{r}\\
     & + \int_0^{\infty} F_0^2(r)F_j^2(t,r)\frac{\ud r}{r}+\sum_{l\neq j}\int_0^{\infty}F_j(t,r)F^2_0(r)F_{l}(t,r)\frac{\ud r}{r}\\
     \lesssim &\ \lam_j^2(1+|\log \lam_j|)^{\frac{1}{2}}+\sum_{l<j}\lam_l^2(1+|\log \lam_j|)^{\frac{1}{2}}\\
     &+ \sum_{l>j}\lam_j^2(1+|\log \lam_l|)^{\frac{1}{2}}+\lam_j^4(1+|\log \lam_j|)^2\\
     &+ \sum_{l<j} \lam_l^2\lam_j^2(1+|\log \lam_j|)^2+ \sum_{l>j} \lam_l^2\lam_j^2(1+|\log \lam_l|)^2.
\end{align}

We use $j<N$ to obtain
\begin{align}
    \lam_j^2(1+|\log \lam_j|)^{\frac{1}{2}}=\lam_N^{\frac{3}{2}}\frac{\lam_j^{\frac{3}{2}}}{\lam_N^{\frac{3}{2}}}\lam_j^{\frac{1}{2}}(1+|\log\lam_j|)^{\frac{1}{2}}\lesssim \lam_N^3+\left(\frac{\lam_j}{\lam_N}\right)^3\lam_j(1+|\log\lam_j|)=o_{t\to T_+}(1)\bfd^2(t).
\end{align}
The other terms can be treated similarly, so we get
\begin{align}
    \left|\lam_j\La\Lam Q_{\U{\lam_j}} \mid \phi_{rb}\Ra\right|\lesssim o_{t\to T_+}(1)\bfd^2(t).
\end{align}

The case $k\geq3$ is similar and we omit it.

The term $\La\uln A( \lam_j) g\mid \phi_{rb}\Ra$ can be treated using the localization property of $\uln A$. More precisely, for $k=2$ we have 
\begin{align}
    \left|\lam_j\La\uln A( \lam_j) g\mid \phi_{rb}\Ra \right|^2\lesssim & \lam_j^2\|\uln A( \lam_j) g\|_{L^2}^2\int_{\ti R^{-1}\lam_j}^{\ti R\lam_j} |\phi_{rb}|^2 r\ud r\\
    \lesssim & \lam_j^2\|\vec g\|_{\E}^2\int_{\ti R^{-1}\lam_j}^{\ti R\lam_j}  \frac{|F_0(\lam_j)|^2}{r^4} r\ud r\\
    \lesssim & \|\vec g\|_{\E}^2|F_0(\lam_j)|^2\\
    \lesssim & \|\vec g\|_{\E}^2 \lam_j^4(1+|\log\lam_j|)\\
    =& o_{t\to T_+}(1)\bfd^4(t).
\end{align}

\emph{Estimate of the term involving $\phi_{rg}$:}  It is very similar to that for $\phi_{rb}$ and we omit it.

\end{proof}

In the rest of this paper, we will fix the solution $\vec u$ given in Lemma~\ref{lem:small-radiation-valid} and only consider the decomposition given in Lemma~\ref{lem:mod}.

\section{Energy estimate}\label{Sec:4}

Throughout this section, we use the same convention as in the proof of
Lemma~\ref{lem:mod}: we may suppress the variables $t$ and $r$ when doing so
improves readability. All time-dependent quantities are understood to be
evaluated at the same time $t$. We keep the variables explicit whenever they
are useful for clarity, in particular in statements of estimates and in
formulas involving time differentiation.

Define
\begin{align}
    \A=\{1\leq j\leq N-1:\iota_j\neq \iota_{j+1}\}.
\end{align}

In this section, we derive the energy expansion starting from~\eqref{eq:energy-decoupling}:
\begin{align}
    &NE(Q)+E(u^*)=E(u)\\
    =&E(\sum_{j=1}^N\iota_j\vec Q_{\lam_j(t)}+\vec u^*(t))+\La \vD E(\sum_{j=1}^N\iota_j\vec Q_{\lam_j(t)}+\vec u^*(t)),\vec g(t)\Ra\\
    &+\frac{1}{2}\La\vD^2E(\sum_{j=1}^N\iota_j\vec Q_{\lam_j(t)}+\vec u^*(t) )\vec g(t),\vec g(t)\Ra+O(\|\vec g(t)\|_{\E}^3).
\end{align}

\begin{lem}\label{lem:energy-estimate-main}
    We have
    \begin{align}\label{eq:energy-estimate-main}
    \|\vec g\|_{\E}^2\lesssim&  \sum_{j\in \A,1\leq j\leq N-1}\left(\frac{\lam_{j}}{\lam_{j+1}}\right)^k-\sum_{j\notin \A,1\leq j\leq N-1}\left(\frac{\lam_{j}}{\lam_{j+1}}\right)^k\\
    &+ \lam_N^2+\left|\La \vD E(\vec u^*),\vec g\Ra\right|,
\end{align}

with
\begin{align}\label{eq:F'-control-main}
        \left|\frac{\ud }{\ud t}\La\vD E(\vec u^*(t)),\vec g(t)\Ra\right|\lesssim  \bfd^2(t)+\lam_N^2(t)|\log \lam_N(t)|.
    \end{align}
\end{lem}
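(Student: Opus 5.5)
We start from the energy expansion displayed above, which is \eqref{eq:energy-decoupling} Taylor-expanded around $\vec U:=\sum_{j}\iota_j\vec Q_{\lam_j}+\vec u^*$, and treat the zeroth, first and second order terms separately.

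\emph{Zeroth order term.} We expand $E(\vec U)$. Using \eqref{eq:sin-decouple}, the standard multi-bubble computation (as in \cite{JL-WM}) gives
\[
E\Bigl(\sum_j\iota_j\vec Q_{\lam_j}\Bigr)=NE(\vec Q)-c_k\sum_{j\in\A}\Bigl(\frac{\lam_j}{\lam_{j+1}}\Bigr)^k+c_k\sum_{j\notin\A}\Bigl(\frac{\lam_j}{\lam_{j+1}}\Bigr)^k+o(\bfd^2),
\]
with $c_k>0$. The cross terms between $\vec u^*$ and the bubbles have the form
\[
\int\Bigl(\p_r u^*\,\p_r Q_{\lam_j}+\frac{k^2}{r^2}\bigl(\tfrac12\sin^2(\cdot)\text{-mixed terms}\bigr)\Bigr)r\,\ud r .
\]
After integration by parts against $\vD E_{\bfp}(Q_{\lam_j})=0$, these reduce to integrals of $r^{-2}F_0^2F_j$ and $r^{-2}F_0F_j^2$ (cf.\ the bound on $\phi_{rb}$) plus boundary-free terms. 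By the appendix estimates \eqref{eq:appendix-2}--\eqref{eq:appendix-8}, these are $O(\lam_N^2)$ for the widest bubble and $o(\bfd^2)$ for the inner ones. Hence
\[
NE(\vec Q)+E(\vec u^*)-E(\vec U)=\sum_{\A}-\sum_{\notin\A}+O(\lam_N^2)+o(\bfd^2).
\]

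\emph{First order term.} Write
\[
\vD E(\vec U)=\vD E(\vec u^*)+\bigl(\vD E(\vec U)-\vD E(\vec u^*)\bigr),
\]
where the second piece equals $\phi_{rb}+\phi_{bb}$ (its velocity component vanishes). The pairing $\la\phi_{bb},g\ra$ is handled exactly as in \cite{JL-WM}: using the orthogonality \eqref{eq:ortho} (and the $\xi_j$ correction when $k=2$), it is $\lesssim c_0\bfd^2$. For $\la\phi_{rb},g\ra$, Cauchy--Schwarz in $H^*$ together with the pointwise bound on $\phi_{rb}$ gives $\lesssim\|g\|_H(\lam_N^2+o(\bfd))$, which is absorbed as $\eps\|g\|_H^2+C\lam_N^2+o(\bfd^2)$. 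The term $\la\vD E(\vec u^*),\vec g\ra$ is kept on the right-hand side.

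\emph{Second order term.} We use coercivity: Lemma~\ref{lem:D2E-coercive} with $v=u^*$, combined with \eqref{eq:ortho}, gives
\[
\la\vD^2E(\vec U)\vec g,\vec g\Ra\gtrsim\|\vec g\|_\E^2-o(\bfd^2).
\]
The cubic remainder is $o(\|\vec g\|^2)$. Combining the three pieces and absorbing $o(\bfd^2)$ terms, which is legitimate because $\bfd^2\lesssim\|\vec g\|^2+\sum(\lam_j/\lam_{j+1})^k+\lam_N^2$ and the $\notin\A$ terms enter with a favorable sign, yields \eqref{eq:energy-estimate-main}.

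\emph{Derivative estimate \eqref{eq:F'-control-main}.} Since $u^*$ solves \eqref{WM}, we have $\p_t\vD E(\vec u^*)=\vD^2E(\vec u^*)J\vD E(\vec u^*)$. For $\p_t\vec g$ we use \eqref{eq:p_tg} and \eqref{eq:p_tdotg}. By antisymmetry of $J$, the principal parts cancel: pairing $\vD E(\vec u^*)$ with $J\vD E(\vec u)-J\vD E(\vec u^*)$, the linear-in-$g$ pieces combine to zero. What remains is:
\begin{itemize}
\item terms $\lam_j'\la\vD E(\vec u^*),\Lam Q_{\U{\lam_j}}\ra$, bounded using \eqref{eq:lam'} and the decay $|u^*|\lesssim F_0$, $|\p_tu^*|\lesssim\ti F_0$, giving $\|\dot g\|\lam_j|\log\lam_j|^{1/2}$-type bounds;
\item terms $\la\vD E(\vec u^*),\,\phi_{rb}+\phi_{bb}+\phi_{rg}+\phi_q\ra$, controlled by the $H^3\times H^2$ regularity of $u^*$, which makes $\vD E_{\bfp}(u^*)$ bounded near $0$, together with the appendix integrals.
\end{itemize}
All of these are $\lesssim\bfd^2+\lam_N^2|\log\lam_N|$.

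\emph{Main obstacle.} The main difficulty is to get the exact weights for the radiation--widest-bubble cross term in $E(\vec U)$ and in the time derivative, namely $\lam_N^2$ rather than merely $o(\lam_N)$. The logarithm arises from $k=2$, and the integrals $\int F_0F_N\,r^{-1}\,\ud r$ have to be checked carefully. For this I would first verify that the $\lam_N^2|\log\lam_N|$ bound in the derivative estimate holds using only $|\vD E_{\bfp}(u^*)|\lesssim1$ near the origin.
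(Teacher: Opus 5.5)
\emph{Overall.} Your architecture matches the paper's: expansion of \eqref{eq:energy-decoupling} around $\Sigma+u^*$ with $\Sigma:=\sum_j\iota_jQ_{\lam_j}$, the multi-bubble energy from \cite{JL-WM}, coercivity from Corollary~\ref{lem:D2E-coercive} with $v=u^*$, and the Hamiltonian cancellation for $\frac{\ud}{\ud t}\la\vD E(\vec u^*),\vec g\ra$. Your first- and second-order steps are fine, except that the bound on $\la\phi_{bb},g\ra$ comes from $\|\vD E_{\bfp}(\Sigma)\|_{H^*}\lesssim\sum_j(\lam_j/\lam_{j+1})^k$, not from orthogonality. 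However, the two radiation--bubble computations, which are the real content of the lemma, do not work as you describe.

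\emph{Zeroth order.} You reduce the cross term to integrals of $F_0^2F_j$ and $F_0F_j^2$ ``as for $\phi_{rb}$'' and claim $O(\lam_N^2)$. For $k=2$ the region $\lam_N\le r\le 1$ gives $\int_0^\infty F_0F_N^2\frac{\ud r}{r}\gtrsim\lam_N^2|\log\lam_N|^{1/2}$, so this route cannot give the $\lam_N^2$ in \eqref{eq:energy-estimate-main}. The missing point is that, unlike $\phi_{rb}$, the energy cross term contains nothing linear in $u^*$ and quadratic in a single bubble. After integrating $\int\p_r u^*\p_rQ_{\lam_j}\,r\,\ud r$ by parts (using $\Delta Q_{\lam_j}=\frac{k^2}{2r^2}\sin 2Q_{\lam_j}$), the terms linear in $u^*$ cancel except for $u^*\bigl(\sum_j\iota_j\sin 2Q_{\lam_j}-\sin 2\Sigma\bigr)$. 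What remains is $\sin 2\Sigma\,(u^*-\frac12\sin 2u^*)$ and $2\sin^2\Sigma\sin^2u^*$, controlled by $F_0^2F_j$ and $F_0^2F_iF_j$. By \eqref{eq:appendix-1}--\eqref{eq:appendix-3} these give $O(\lam_N^2)$. The triple term $F_0F_iF_j$ ($i<j$), which you do not list, gives via \eqref{eq:appendix-4} $\lam_i^2(1+|\log\lam_j|)^{1/2}=(\lam_i/\lam_j)^2\lam_j^2(1+|\log\lam_j|)^{1/2}=o(\bfd^2)$ for $k=2$.

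\emph{Time derivative.} It is not true that ``the linear-in-$g$ pieces combine to zero'' with remainder $\la\vD E(\vec u^*),\phi_{rb}+\phi_{bb}+\phi_{rg}+\phi_q\ra$. The leftover $\vD E(\vec u)-\vD E(\vec u^*)-\vD^2E(\vec u^*)\vec g$ has zero second component, so after applying $J$ it is paired with $\p_tu^*$, not with $\vD E_{\bfp}(u^*)$. Since the linearization is about $u^*$ rather than about $\Sigma$, it also contains the linear term $\frac{k^2}{r^2}\bigl(\cos(2\Sigma+2u^*)-\cos 2u^*\bigr)g$, whose coefficient is $\lesssim r^{-2}\sum_jF_j$. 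This term is $\lesssim\lam_N\|\vec g\|_{\E}$ only because $|\p_tu^*|\lesssim\min\{r,1\}$ near $0$ and $\p_tu^*=0$ for $r\ge8\rho$, via \eqref{eq:appendix-5}; this is where the $H^2$ assumption on $\Psi_1$ enters. With a merely bounded weight you only get $\|g\|_{L^\infty}\int_0^\infty F_j\frac{\ud r}{r}\lesssim\|\vec g\|_{\E}$, which is not $O(\bfd^2)$. Pairing $\p_tu^*$ with $\phi_{rg}$ alone is also only $O(\|\vec g\|_\E)$: away from the bubbles its coefficient is of size $r^{-2}(u^*)^2$, and this is cancelled exactly by the term you dropped.

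For the term $\lam_j'\la\vD E_{\bfp}(u^*),\Lam Q_{\U{\lam_j}}\ra$, the shortcut $|\vD E_{\bfp}(u^*)|\lesssim1$ from your last paragraph gives only $\lam_N|\log\lam_N|$ for $k=2$. After Young this is $\lam_N^2|\log\lam_N|^2$, which does not imply \eqref{eq:F'-control-main}. To get the $\lam_j(1+|\log\lam_j|)^{1/2}$ you assert:
\begin{itemize}
\item write $\vD E_{\bfp}(u^*)=-\p_{tt}u^*$;
\item integrate by parts onto $\Lam Q_{\U{\lam_j}}=\frac{k}{\lam_j}\sin Q_{\lam_j}$;
\item use $\Delta\sin Q_{\lam}=\frac{k^2}{r^2}\cos(2Q_{\lam})\sin Q_{\lam}$.
\end{itemize}
Then only $u^*(\cos 2Q_{\lam_j}-1)$ and $2u^*-\sin 2u^*$ survive, and \eqref{eq:appendix-8}, \eqref{eq:appendix-9} with \eqref{eq:lam'} give $\bfd^2+\lam_N^2|\log\lam_N|$.
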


\begin{rem}
    From the proof we can see that when $k\geq 3$,  we actually have
    \begin{align}
        \left|\frac{\ud }{\ud t}\La\vD E(\vec u^*(t)),\vec g(t)\Ra\right|\lesssim  \bfd^2(t),
    \end{align}
    which makes the analysis in Section~\ref{sec:proof} much simpler. However we will treat the case $k\geq 2$ together.
\end{rem}

This lemma is an immediate result of the remaining parts of this section.

\subsection{Zeroth order term}
\begin{lem}\label{lem:zeroth-order-term}
Fix $k\ge1,  M \in \N$. 
For any $\te>0$, there exists $\eta>0$ with the following property. Consider the subset of $M$-bubble configurations  $\sum_{j=1}^M\iota_j\vec Q_{\lam_j}$ such that 
\begin{align}
\sum_{j =1}^{M-1} \Big( \frac{ \lam_{j}}{\lam_{j+1}} \Big)^k \le \eta.
\end{align}
Then, 
\begin{align}\label{eq:main-term-bubble-interaction}
  \Big|  E\left( \sum_{j=1}^M\iota_j\vec Q_{\lam_j}\right)  - M E( \vec Q) -  16 k \pi \sum_{j =1}^{M-1} \iota_j \iota_{j+1}  \Big( \frac{ \lam_{j}}{\lam_{j+1}} \Big)^k  \Big| \le \te \sum_{j =1}^{M-1} \Big( \frac{ \lam_{j}}{\lam_{j+1}} \Big)^k.
\end{align}

In our modulation setting we also have
\begin{align}\label{eq:bubble-interaction-energy-difference}
    \left|E\left(\sum_{j=1}^N\iota_j\vec Q_{\lam_j(t)}\right)+E(\vec u^*(t))-E\left(\sum_{j=1}^N\iota_j\vec Q_{\lam_j(t)}+\vec u^*(t)\right)\right|\lesssim\lam_N^2(t)+\bfd^2(t) \cdot o_{t\to T_+}(1).
\end{align}
\end{lem}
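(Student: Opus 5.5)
The first estimate \eqref{eq:main-term-bubble-interaction} is the standard multi-bubble energy expansion, and I would take it from \cite{JL-WM} (Lemma 2.22 there, or the analogous statement in \cite{J-WM-two-bubble-construction}) rather than reprove it. For completeness, the mechanism is as follows. One writes
\[
E\Big(\sum_j\iota_j\vec Q_{\lam_j}\Big)-ME(\vec Q)=\pi k^2\int_0^\infty\Big(\sin^2\Big(\sum_j\iota_jQ_{\lam_j}\Big)-\sum_j\sin^2Q_{\lam_j}\Big)\frac{\ud r}{r}+2\pi\sum_{i<j}\iota_i\iota_j\int_0^\infty\p_rQ_{\lam_i}\p_rQ_{\lam_j}\,r\ud r .
\]
One then integrates the cross gradient term by parts using $-\De Q_{\lam}+\frac{k^2}{r^2}f(Q_\lam)=0$. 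After this, only nearest-neighbour pairs $(j,j+1)$ contribute at order $(\lam_j/\lam_{j+1})^k$. Non-adjacent pairs are smaller, since $\lam_i/\lam_{j}\le(\lam_i/\lam_{i+1})(\lam_{i+1}/\lam_j)$. The constant $16k\pi$ comes from the asymptotics $Q(r)\approx 2r^k$ at $0$ and $\pi-Q(r)\approx 2r^{-k}$ at $\infty$. The smallness of $\eta$ absorbs all higher-order corrections into $\te\sum_j(\lam_j/\lam_{j+1})^k$.

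For \eqref{eq:bubble-interaction-energy-difference}, set $\mathcal Q=\sum_j\iota_jQ_{\lam_j}$. The kinetic parts cancel exactly, since $\vec Q_{\lam_j}$ have zero velocity components and the velocity of the sum is $\p_tu^*$. The difference therefore equals
\[
-2\pi\int_0^\infty\p_r\mathcal Q\,\p_ru^*\,r\ud r+\pi k^2\int_0^\infty\big(\sin^2\mathcal Q+\sin^2u^*-\sin^2(\mathcal Q+u^*)\big)\frac{\ud r}{r}.
\]
I would integrate the gradient cross term by parts. This is legitimate because $u^*$ is compactly supported with $|u^*|\lesssim F_0$ near $0$. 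It turns the term into $\int(-\De\mathcal Q)u^*\,r\ud r$. Then I use $-\De Q_{\lam_j}=-\frac{k^2}{r^2}f(Q_{\lam_j})$ and the identity $\sin^2(a+b)-\sin^2a-\sin^2b=\sin 2a\,\sin^2 b\cdot(\ldots)+\ldots$, which is more precisely $\sin^2(a+b)-\sin^2a-\sin^2b=\tfrac12\sin2a\sin2b-2\sin^2a\sin^2b$. The linear-in-$u^*$ parts cancel up to the terms
\[
\frac{k^2}{r^2}\Big(f(\mathcal Q)-\sum_j\iota_jf(Q_{\lam_j})\Big)u^*\quad\text{and}\quad \frac{1}{r^2}\Big(\tfrac12\sin2\mathcal Q\,(\sin 2u^*-2u^*)+\sin^2\mathcal Q\sin^2u^*\Big).
\]
Pointwise, using \eqref{eq:sin-decouple} and the bound $|u^*|\lesssim F_0$, the remainder is controlled by
\[
\frac1{r^2}\Big(F_0\sum_{i<j}F_iF_j+F_0^2\sum_jF_j^2+F_0^3\sum_jF_j\Big).
\]

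It then remains to integrate against $r\ud r$ and apply the appendix bounds on $\int F_0^aF_iF_j\frac{\ud r}{r}$, that is, \eqref{eq:appendix-2}--\eqref{eq:appendix-8}.
\begin{itemize}
\item The term $F_0^2F_N^2$ gives $F_0(\lam_N)^2\lesssim\lam_N^4|\log\lam_N|$ and $F_0F_N$-type terms give $\lam_N^2$ (possibly with a log), which is the main source of the $\lam_N^2$ on the right side.
\item For $j<N$, the terms involving $\lam_j$ are converted into $o(1)\bfd^2$ exactly as in the proof of Lemma~\ref{lem:mod}, via $\lam_j^2(1+|\log\lam_j|)^{1/2}\lesssim \lam_N^3+(\lam_j/\lam_N)^3\lam_j(1+|\log\lam_j|)$.
\item Cross terms $F_0F_iF_j$ with $i<j$ are bounded by $(\lam_i/\lam_j)^{k/2}\cdot o(1)$, hence by $o(1)\bfd^2$.
\end{itemize}

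The main obstacle is the term linear in $u^*$ paired with the widest bubble, $\int\frac{1}{r^2}F_0F_N^{2}\,r\ud r$ type contributions, for $k=2$. There the logarithm in $F_0$ threatens a bound $\lam_N^2|\log\lam_N|^{1/2}$ rather than $\lam_N^2$. I would try to exploit the exact cancellation $-\De Q_{\lam_N}+\frac{k^2}{r^2}f(Q_{\lam_N})=0$ so that only quadratic-in-$F_N$ terms with an extra factor $F_0$ survive, whose integral localizes near $r\sim\lam_N$ with integrable tails. If a logarithm still remains, it is $o(1)\cdot\lam_N^{2-}$, and I would absorb it using $\lam_N\le\bfd$ together with $\lam_N^2|\log\lam_N|^{1/2}$ compared against the $\lam_N^2$ allowance, possibly by using the sharper vanishing $f(t,r)/r^2\to$ bounded along with \eqref{eq:f-k-1} near $\lam_N$.
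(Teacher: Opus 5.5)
Your argument is correct and is essentially the paper's proof. You integrate the gradient cross term by parts using $\Delta Q_{\lam_j}=\frac{k^2}{r^2}f(Q_{\lam_j})$ and expand $\sin^2\mathcal Q+\sin^2u^*-\sin^2(\mathcal Q+u^*)$ by the same identity. You then bound the three resulting pieces by $F_0\sum_{i<j}F_iF_j$, $F_0^2\sum_jF_j^2$, and a term cubic in $u^*$ (the paper uses the cruder $F_0^2\sum_jF_j$ there), and conclude with the appendix integrals.

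Two corrections, though.

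\emph{The ``main obstacle'' in your last paragraph does not exist.} After the cancellation, the only term linear in $u^*$ is the one controlled via \eqref{eq:sin-decouple}. That bound contains only off-diagonal products $F_iF_j$ with $i<j$, so no $F_0F_N^2$ contribution arises. Your fallback would fail anyway: $\lam_N^2|\log\lam_N|^{1/2}$ is not $\lesssim\lam_N^2+o(1)\bfd^2$ when $\bfd\sim\lam_N$.

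\emph{The exponent in your third bullet is wrong.} The gain from \eqref{eq:appendix-4} is $(\lam_i/\lam_j)^{k}\cdot o(1)$, not $(\lam_i/\lam_j)^{k/2}\cdot o(1)$. The latter would only give $o(1)\bfd$, not $o(1)\bfd^2$.
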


\begin{proof}
    For the proof of ~\eqref{eq:main-term-bubble-interaction}, see Lemma 2.22 in \cite{JL-WM}. We only give a proof of ~\eqref{eq:bubble-interaction-energy-difference}. 

    A direct computation yields
\begin{align}
    &E(\sum_{j=1}^N\iota_j \vec Q_{\lam_j(t)}) + E(\vec u^*(t)) - E(\sum_{j=1}^N\iota_j  \vec Q_{\lam_j(t)} + \vec u^*(t)) \\=& \pi k^2 \int_0^\infty \left[ \sin^2\left(\sum_{j=1}^N\iota_j  Q_{\lam_j(t)}\right) + \sin^2 u^*(t) - \sin^2\left(\sum_{j=1}^N\iota_j  Q_{\lam_j(t)} + u^*(t)\right)\right]\frac{\ud r}{r}\\
    &+\pi k^2\int_0^{\infty}\left[u^*(t) \sum_{j=1}^N \iota_j \sin(2Q_{\lam_j(t)}) \right] \frac{\ud r}{r}\\
    =& \pi k^2 \int_0^\infty \left[ \sin\left(2\sum_{j=1}^N\iota_j  Q_{\lam_j(t)}\right) \left( u^*(t,r) - \frac{\sin(2u^*(t,r))}{2} \right) + 2 \sin^2 \left(\sum_{j=1}^N\iota_j  Q_{\lam_j(t)}\right) \sin^2 u^*(t,r)  \right] \frac{\ud r}{r}     \\
    &+\pi k^2 \int_0^\infty u^*(t,r) \left[ \sum_{j=1}^N \iota_j \sin(2Q_{\lam_j(t)}) - \sin\left(2\sum_{j=1}^N\iota_j  Q_{\lam_j(t)}\right) \right] \frac{\ud r}{r}.
\end{align}

Since $|u^*|\lesssim\|\vec u^*\|_{\E}\ll1$,  ~\eqref{eq:appendix-1} yields
\begin{align}
    &\left|\int_0^\infty \left[ \sin\left(2\sum_{j=1}^N\iota_j  Q_{\lam_j(t)}\right) \left( u^*(t,r) - \frac{\sin(2u^*(t,r))}{2} \right)  \right] \frac{\ud r}{r}   \right|\\
    \lesssim&  \sum_{j=1}^N \int_0^{\infty}  \left|\sin 2Q_{\lam_j(t)}\right| \left|u^*(t,r)\right|^2\frac{\ud r}{r}\\
    \lesssim&\sum_{j=1}^N\int_0^{\infty}F_j(t,r)F_0^2(r)\frac{\ud r}{r} \\
    \lesssim&\sum_{j=1}^N\lam_j^2(t)\lesssim\lam_N^2(t).
\end{align}

Using ~\eqref{eq:appendix-2} and ~\eqref{eq:appendix-3},  we obtain
\begin{align}
    \left|\int_0^\infty  \sin^2\left(\sum_{j=1}^N\iota_j  Q_{\lam_j(t)}\right) \sin^2 u^*(t,r)   \frac{dr}{r}   \right|&\lesssim \sum_{i,j=1}^N\int_0^{\infty}F_i(t,r)F_j(t,r)F_0^2(r)\frac{\ud r}{r}\\
    &\lesssim\sum_{j=1}^N\lam_j^2(t)\lesssim\lam_N^2(t).
\end{align}

Now we treat the last term. 

For $k\geq 3$ we use ~\eqref{eq:sin-decouple} and ~\eqref{eq:appendix-4} to obtain
\begin{align}
    &\left|\int_0^\infty u^* (t,r)\left[ \sum_{j=1}^N \iota_j \sin(2Q_{\lambda_j(t)}) - \sin\left(2\sum_{j=1}^N\iota_j  Q_{\lam_j(t)}\right) \right] \frac{dr}{r}\right|\\
    \lesssim& \sum_{1\leq i<j\leq N}\int_0^{\infty}F_0(r)F_i(t,r)F_j(t,r)\frac{\ud r}{r}\\
    \lesssim& \sum_{1\leq i<j\leq N} \lam_j^2(t)\left(\frac{\lam_i(t)}{\lam_j(t)}\right)^k\\
    \lesssim& \lam_N^2(t).
\end{align}

For $k=2$ we have
\begin{align}
    &\left|\int_0^\infty u^* (t,r)\left[ \sum_{j=1}^N \iota_j \sin(2Q_{\lambda_j(t)}) - \sin\left(2\sum_{j=1}^N\iota_j  Q_{\lam_j(t)}\right) \right] \frac{\ud r}{r}\right|\\
    \lesssim& \sum_{1\leq i<j\leq N}\int_0^{\infty}F_0(r)F_i(t,r)F_j(t,r)\frac{\ud r}{r}\\
    \lesssim& \sum_{1\leq i<j\leq N}  \lam_i^2(t)(1+|\log \lam_j(t)|)^{\frac{1}{2}}\\
    =& \sum_{1\leq i<j\leq N} \left(\frac{\lam_i(t)}{\lam_j(t)}\right)^2 \lam_j^2(t)(1+|\log \lam_j(t)|)^{\frac{1}{2}}\\
    =& \ \bfd^2(t) \cdot o_{t\to T_+}(1).
\end{align}

\end{proof}

\subsection{First order term}
\begin{lem}
    There holds
    \begin{align}
        &\left|\La \vD E(\sum_{j=1}^N\iota_j\vec Q_{\lam_j(t)}+\vec u^*(t) )-\vD E (\vec u^*(t))\mid \vec g(t)  \Ra\right|\\ \lesssim & \|\vec g\|_{\E}\left(\sum_{j=1}^{N-1}\left(\frac{\lam_j(t)}{\lam_{j+1}(t)}\right)^k+\lam_N^2(t)\left(1+|\log \lam_N(t)|\right)\right)
    \end{align}
\end{lem}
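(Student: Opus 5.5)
Since $\vD E$ acts on pairs and the second components are $0$ for the bubbles, the time components of $\vD E(\sum_j\iota_j\vec Q_{\lam_j}+\vec u^*)$ and $\vD E(\vec u^*)$ are both $\p_t u^*$ and cancel. Only the spatial part remains:
\begin{align}
\vD E_{\bfp}\Big(\sum_j\iota_jQ_{\lam_j}+u^*\Big)-\vD E_{\bfp}(u^*)
=\frac{k^2}{r^2}\Big[f\Big(\sum_j\iota_jQ_{\lam_j}+u^*\Big)-f(u^*)-\sum_j\iota_jf(Q_{\lam_j})\Big],
\end{align}
because $-\De Q_{\lam_j}+\frac{k^2}{r^2}f(Q_{\lam_j})=0$ and $\De$ is linear. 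This bracket equals $-\phi_{rb}-\phi_{bb}$ in the notation \eqref{eq:def-phi-rb}, \eqref{eq:def-phi-bb}. By \eqref{eq:sin-decouple-1} it is bounded pointwise by
\begin{align}
\frac{1}{r^2}\Big(F_0(r)\sum_{j=1}^NF_j(t,r)+\sum_{1\le i<j\le N}F_iF_j\Big).
\end{align}
We pair this against $g$ and use Cauchy--Schwarz in the weighted form $|\la h\mid g\ra|\le \|rh\|_{L^2}\|g/r\|_{L^2}\lesssim \|rh\|_{L^2}\|\vec g\|_{\E}$, with $L^2=L^2(r\,\ud r)$. This reduces the lemma to bounding $\|r h\|_{L^2}$ for each of the two families of terms.

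\emph{Bubble--bubble part.} We need $\int_0^\infty F_i^2F_j^2\,\frac{\ud r}{r}$ for $i<j$. On $r\le\lam_i$ the integrand is $(r/\lam_i)^{2k}(r/\lam_j)^{2k}$; on $\lam_i\le r\le\lam_j$ it is $(\lam_i/r)^{2k}(r/\lam_j)^{2k}=(\lam_i/\lam_j)^{2k}$, which integrates to $(\lam_i/\lam_j)^{2k}\log(\lam_j/\lam_i)$; on $r\ge\lam_j$ it is $(\lam_i\lam_j/r^2)^{2k}$. The logarithm spoils the naive bound, so the crude pointwise estimate is too lossy. We therefore treat $\phi_{bb}$ as in \cite{JL-WM}, where the analogous estimate $\|\vD E_{\bfp}(\sum\iota_jQ_{\lam_j})\|$ in a dual norm is shown to be $\lesssim\sum_j(\lam_j/\lam_{j+1})^k$. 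We refine the decouple estimate: for $\lam_i\ll r\ll\lam_j$ we expand $f$ around $Q_{\lam_j}$ (near $0$) and $Q_{\lam_i}$ (near $\pi$), so the error is $\frac{1}{r^2}\big(F_i^2F_j+F_iF_j^2\big)$, quadratic in one factor. Then $\int F_i^4F_j^2\frac{\ud r}{r}+\int F_i^2F_j^4\frac{\ud r}{r}\lesssim(\lam_i/\lam_j)^{2k}$ with no logarithm. Near $r\sim\lam_i$ or $r\sim\lam_j$ the region has bounded logarithmic length, so the bound $(\lam_i/\lam_j)^{2k}$ holds there too. For non-adjacent $i<j$ we bound $\lam_i/\lam_j\le\lam_i/\lam_{i+1}$. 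This gives the term $\sum_j(\lam_j/\lam_{j+1})^k$.

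\emph{Radiation--bubble part.} We need $\int_0^\infty F_0^2F_j^2\,\frac{\ud r}{r}$. For $k=2$ and $r\le1$ we have $F_0\approx r^2(1+|\log r|)^{1/2}$. The contribution from $r\lesssim\lam_j$ is $\lesssim\lam_j^4(1+|\log\lam_j|)$. For $r\ge\lam_j$ the integrand is $r^4(1+|\log r|)(\lam_j/r)^4$ with $k=2$, which integrates over $[\lam_j,1]$ to $\lam_j^4\int(1+|\log r|)\frac{\ud r}{r}\sim\lam_j^4\log^2\lam_j$. Taking the square root gives $\lam_j^2(1+|\log\lam_j|)$, and the region $r\ge1$ contributes $\lam_j^4$. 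For $k\ge3$ the same computation gives $\lam_j^4$ up to at most the same logarithm; this is the content of the appendix estimates \eqref{eq:appendix-2}--\eqref{eq:appendix-8}, which we invoke. Since $\lam_j\lesssim\lam_N\ll1$ and $\lam^2(1+|\log\lam|)$ is increasing for small $\lam$, summing over $j$ gives $\lesssim\lam_N^2(1+|\log\lam_N|)$.

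The main obstacle is the logarithmic loss in the bubble--bubble term for the intermediate region $\lam_i\ll r\ll\lam_j$. We must remove it by the refined quadratic expansion, or alternatively use the cancellation that $\frac{1}{r^2}(f(a+b)-f(a)-f(b))$ is, at leading order, $\frac{1}{r^2}f''$-type and hence quadratic in the small component. If the pointwise refinement proves delicate near the transition scales, we fall back on citing the corresponding bound on $\|\vD E(\sum_j\iota_j\vec Q_{\lam_j})\|_{\E^*}$ from \cite{JL-WM}. The cross term between $u^*$ and two bubbles is already covered by the $F_0F_j$ estimate.
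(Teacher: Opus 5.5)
Your proposal is correct and is essentially the paper's proof. The bracket is $-\phi_{bb}-\phi_{rb}$, and the pure-bubble pairing $\langle \vD E(\sum_j\iota_j\vec Q_{\lam_j})\mid \vec g\rangle$ is controlled by \cite[Lemma 2.22]{JL-WM}; your refined bound $r^{-2}(F_i^2F_j+F_iF_j^2)$ correctly explains why no logarithm appears there. The radiation--bubble part is bounded pointwise by products of $F_0$ and the $F_j$, then by Cauchy--Schwarz with \eqref{eq:appendix-3}, exactly as in the paper. The only adjustment is to make the split on the expression itself rather than on the combined bound \eqref{eq:sin-decouple-1}: the estimate $|\phi_{rb}|\lesssim r^{-2}F_0\sum_jF_j$ follows from \eqref{eq:decouple} applied to the pair $u^*$ and $\sum_j\iota_jQ_{\lam_j}$.
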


\begin{proof}
    We write
    \begin{align}
        &\La \vD E(\sum_{j=1}^N\iota_j\vec Q_{\lam_j(t)}+\vec u^*(t) )-\vD E (\vec u^*(t))\mid \vec g(t)  \Ra\\
        =& \La \vD E(\sum_{j=1}^N\iota_j\vec Q_{\lam_j(t)} ) \mid \vec g(t)\Ra\\
        &+\frac{k^2}{2} \int_0^{\infty} \left[ \sin\left( 2u^*(t,r) + 2\sum_{j=1}^N \iota_j Q_{\lam_j(t)} \right) - \sin\left( 2\sum_{j=1}^N \iota_j Q_{\lam_j(t)} \right) - \sin(2u^*(t,r)) \right] g(t,r) \frac{\ud r}{r}.
    \end{align}

From Lemma 2.22 in \cite{JL-WM} we know that
\begin{align}
    \left|\La \vD E(\sum_{j=1}^N\iota_j\vec Q_{\lam_j(t)} ) \mid \vec g(t)\Ra\right|\lesssim\|\vec g(t)\|_{\E}\left(\sum_{j=1}^{N-1}\left(\frac{\lam_j(t)}{\lam_{j+1}(t)}\right)^k\right).
\end{align}

Rearranging the term in the last line, we obtain
\begin{align}
    &\frac{k^2}{2} \int_0^{\infty} \left[ \sin\left( 2u^*(t,r) + 2\sum_{j=1}^N \iota_j Q_{\lam_j(t)} \right) - \sin\left( 2\sum_{j=1}^N \iota_j Q_{\lam_j(t)} \right) - \sin(2u^*(t,r)) \right] g(t,r) \frac{\ud r}{r}\\
    =&\frac{k^2}{2} \int_0^{\infty} \left[ \sin\left( 2\sum_{j=1}^N \iota_j Q_{\lam_j(t)} \right) (\cos (2u^*(t,r))-1)\right] g(t,r) \frac{\ud r}{r}\\
    &+\frac{k^2}{2} \int_0^{\infty} \left[ \sin\left( 2u^*(t,r) \right) \left(\cos \left( 2\sum_{j=1}^N \iota_j Q_{\lam_j(t)} \right)-1\right)\right] g(t,r) \frac{\ud r}{r}.
\end{align}

Therefore it follows from ~\eqref{eq:appendix-3} that
\begin{align}
    &\left|\frac{k^2}{2} \int_0^{\infty} \left[ \sin\left( 2u^*(t,r) + 2\sum_{j=1}^N \iota_j Q_{\lam_j(t)} \right) - \sin\left( 2\sum_{j=1}^N \iota_j Q_{\lam_j(t)} \right) - \sin(2u^*(t,r)) \right] g(t,r) \frac{\ud r}{r}\right| \\
    \lesssim & \int_0^{\infty} \left[ \sum_{j=1}^NF_j(t,r) F_0^2(r)\right] |g(t,r)| \frac{\ud r}{r}+\int_0^{\infty} \left[ F_0(r) \sum_{j=1}^NF_j^2(t,r)\right] |g(t,r)| \frac{\ud r}{r}\\
    \lesssim & \sum_{j=1}^N\left(\int_0^{\infty}F_j^2(t,r)F_0^4(r)\frac{\ud r}{r}\right)^{\frac{1}{2}}\left( \int_0^{\infty}\frac{|g(t,r)|^2}{r^2}r\ud r\right)^{\frac{1}{2}}\\
    &+\sum_{j=1}^N \left(\int_0^{\infty}F_j^4(t,r)F_0^2(r)\frac{\ud r}{r}\right)^{\frac{1}{2}}\left(\int_0^{\infty}\frac{|g(t,r)|^2}{r^2}r\ud r\right)^{\frac{1}{2}}\\
    \lesssim & \sum_{j=1}^N \|\vec g(t)\|_{\E}\lam_j^2(t)(1+|\log \lam_j(t)|)\lesssim \|\vec g(t)\|_{\E} \lam_N^2(t)(1+|\log \lam_N(t)|).
\end{align}

\end{proof}

\begin{lem}\label{lem:F'-estimate}
    There holds
    \begin{align}
        \left|\frac{\ud }{\ud t}\La\vD E(\vec u^*(t)),\vec g(t)\Ra\right|\lesssim  \bfd^2(t)+\lam_N^2(t)(1+|\log \lam_N(t)|).
    \end{align}
\end{lem}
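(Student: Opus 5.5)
We differentiate the pairing directly and use the Hamiltonian structure. Write $\La \vD E(\vec u^*),\vec g\Ra = \La \vD E_{\bfp}(u^*), g\Ra + \La \p_t u^*, \dot g\Ra$. Since $\p_t\vec u^* = J\vD E(\vec u^*)$, we have $\p_t u^* $ as the second component and $\p_t^2 u^* = -\vD E_{\bfp}(u^*)$. Differentiating gives
\begin{align}
\frac{\ud}{\ud t}\La \vD E(\vec u^*),\vec g\Ra
= \La \vD^2E(\vec u^*)\p_t\vec u^*,\vec g\Ra + \La \vD E(\vec u^*),\p_t\vec g\Ra .
\end{align}
For $\p_t\vec g$ we use \eqref{eq:p_tg} and the identity $\p_t \dot g = -\cL_0 g+\ti\phi_q+\phi_{rb}+\phi_{bb}$ (equivalently, $\p_t\vec g = J\big(\vD E(\vec u)-\vD E(\vec u^*)\big) + \sum_j \iota_j\lam_j'(\Lam Q_{\U{\lam_j}},0)$ after removing the bubble time derivatives). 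The key structural point is the antisymmetry of $J$: the linear part $\La \vD^2E(\vec u^*)\,J\vD E(\vec u^*),\vec g\Ra + \La \vD E(\vec u^*), J\vD^2E(\vec u^*)\vec g\Ra$ cancels by self-adjointness of $\vD^2E$ and $J^*=-J$. What remains is
\begin{align}
\La \vD E(\vec u^*), J\big(\vD E(\vec u)-\vD E(\vec u^*)-\vD^2E(\vec u^*)\vec g\big)\Ra + \sum_j\iota_j\lam_j'\La \vD E_{\bfp}(u^*),\Lam Q_{\U{\lam_j}}\Ra ,
\end{align}
and $J$ pairs $\vD E_{\bfp}$-type errors against $\p_t u^*$. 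Concretely, the first term equals $\La \p_t u^*, -\tfrac{k^2}{r^2}\big(f(\textstyle\sum\iota_lQ_{\lam_l}+u^*+g)-f(u^*)-f'(u^*)g\big) - \sum_l \iota_l \vD E_{\bfp}(Q_{\lam_l})\text{-terms}\Ra$, which we split into $\phi_q$, $\phi_{rb}$, $\phi_{bb}$, $\phi_{rg}$ in the notation of Lemma~\ref{lem:mod}. The $\Delta Q_{\lam_l}$ parts cancel against $\frac{k^2}{r^2}f(Q_{\lam_l})$ since the $Q_{\lam_l}$ are harmonic maps.

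Each piece is then estimated with the pointwise bounds $|\p_tu^*|\lesssim \ti F_0$ near $0$, with $\p_t u^*=0$ for $r\ge 8\rho$, and $|u^*|\lesssim F_0$.

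The quadratic term $\La\p_t u^*,\phi_q\Ra$ is bounded by $\int \ti F_0 |g|^2 r^{-2} r\,\ud r \lesssim \|g\|_H^2$.

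The mixed term $\phi_{rg}$ satisfies $|\phi_{rg}|\lesssim r^{-2}\sum_l F_l\,|g|$, up to harmless factors of $F_0$. Hence $|\La\p_tu^*,\phi_{rg}\Ra|\lesssim \|g\|_H\sum_l(\int \ti F_0^2F_l^2 r^{-1}\ud r)^{1/2}\lesssim \|g\|_H\lam_N$, which is at most $\bfd^2$.

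The radiation--bubble term $\phi_{rb}$ is bounded by $r^{-2}(F_0F_l^2+F_0^2F_l)$. Pairing it against $\ti F_0$ gives integrals of the type in the appendix, of order $\lam_l^2(1+|\log\lam_l|)$, hence at most $\lam_N^2(1+|\log\lam_N|)$.

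The bubble--bubble term $\phi_{bb}$ is bounded by $r^{-2}\sum_{i<j}F_iF_j$ via \eqref{eq:sin-decouple}. Pairing it against $\ti F_0$ gives $\lesssim\sum_{i<j}\lam_j(\lam_i/\lam_j)^{k}$ up to logs, which is $\lesssim\bfd^2$.

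Finally, the modulation term is handled using \eqref{eq:lam'}: $|\lam_j'|\lesssim\|\dot g\|_{L^2}$. For it we need $|\La \vD E_{\bfp}(u^*),\Lam Q_{\U{\lam_j}}\Ra|\lesssim \lam_j$ (up to a log when $k=2$), and then the product is $\lesssim \|\vec g\|_\E\lam_N\lesssim \bfd^2$.

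The main obstacle is this last bound. It requires regularity of $u^*$ beyond energy: $\vD E_{\bfp}(u^*) = -\p_t^2u^*$, so it should be bounded in $L^2$ uniformly in time by the $H^3\times H^2$ persistence from the proof of Lemma~\ref{lem:small-radiation-valid}. Alternatively, after integrating by parts, one can use $|\p_r u^*|\lesssim r$-type bounds near the origin. Given $\|\vD E_{\bfp}(u^*)\|_{L^2}\lesssim1$, Cauchy--Schwarz gives $\|\Lam Q_{\U{\lam_j}}\|_{L^2}\lesssim 1$, which is not enough. So instead we pair pointwise, using $|\vD E_{\bfp}(u^*)|\lesssim$ bounded near $0$ (from $F_0$-type decay of second derivatives) against $|\Lam Q_{\U{\lam_j}}|\lesssim \lam_j^{-1}F_j$. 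This yields $\int F_j\, r\,\ud r/\lam_j\lesssim \lam_j$, with a logarithmic loss for $k=2$, which is still absorbed by $\bfd^2+\lam_N^2|\log\lam_N|$.

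One also has to check that the formal differentiation is justified for $\vec g\in\E$ only. We would do this by regularization, or by writing everything in the integrated-by-parts forms above.
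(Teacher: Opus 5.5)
Your overall architecture is the paper's. The antisymmetry of $J$ and the self-adjointness of $\vD^2E$ cancel the linear part. The remaining nonlinear pairing against $\p_tu^*$ then splits into quadratic, bubble--$g$, radiation--bubble and bubble--bubble pieces, and you bound these correctly. (What you call $\phi_{rg}$ is really $\big(f'(\sum_l\iota_lQ_{\lam_l}+u^*)-f'(u^*)\big)g$ rather than the $\phi_{rg}$ of Lemma~\ref{lem:mod}, but $r^{-2}\sum_lF_l|g|$ is the right bound for it.)

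\textbf{The gap: the modulation term for $k=2$.} The problem is $\sum_j\iota_j\lam_j'\La\vD E_{\bfp}(u^*),\Lam Q_{\U{\lam_j}}\Ra$ when $k=2$, which is exactly the case where the power of the logarithm in the statement matters. Pairing a bounded $\vD E_{\bfp}(u^*)$ with $|\Lam Q_{\U{\lam_j}}|\lesssim\lam_j^{-1}F_j$ gives $\lam_j^{-1}\int_0^\rho F_j\,r\,\ud r\simeq\lam_j(1+|\log\lam_j|)$. This is a \emph{full} logarithm, because $\Lam Q_{\U{\lam_j}}\simeq\lam_jr^{-2}$ for $\lam_j\ll r\lesssim\rho$.

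Multiplying by $|\lam_j'|\lesssim\|\vec g\|_{\E}$ leaves $\|\vec g\|_{\E}\lam_N(1+|\log\lam_N|)$. This is not bounded by $\bfd^2+\lam_N^2(1+|\log\lam_N|)$: take $\|\vec g\|_{\E}\simeq\lam_N|\log\lam_N|^{1/2}$. Young's inequality only yields $\bfd^2+\lam_N^2(1+|\log\lam_N|)^2$.

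The loss is not cosmetic. With $\log^2$, the argument of Section~\ref{sec:proof} produces $n^{-2}\lesssim(S_n-S_{n+1})S_n^2$ instead of $n^{-1}\lesssim(S_n-S_{n+1})S_n^2$. That is summable, so it gives no contradiction.

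There is also a second problem. The pointwise bound on $\vD E_{\bfp}(u^*)=-\p_t^2u^*$ that you rely on is not supplied by Lemma~\ref{lem:small-radiation-valid}, which controls only $u^*$ and $\p_tu^*$.

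\textbf{The missing idea: never estimate $\p_t^2u^*$.} The function $\Lam Q_{\U{\lam_j}}=\frac{k}{\lam_j}\sin Q_{\lam_j}$ lies in the kernel of the linearization at $Q_{\lam_j}$, namely $(\p_{rr}+\frac1r\p_r)\sin Q_{\lam_j}=\frac{k^2}{r^2}\cos(2Q_{\lam_j})\sin Q_{\lam_j}$. So you can use the equation for $u^*$ and integrate by parts to move the Laplacian onto $\sin Q_{\lam_j}$:
\[
\int_0^\infty\p_t^2u^*\,\frac{k}{\lam_j}\sin Q_{\lam_j}\,r\,\ud r=\int_0^\infty\frac{k^2}{2r^2}\Big[2u^*\big(\cos(2Q_{\lam_j})-1\big)+\big(2u^*-\sin(2u^*)\big)\Big]\frac{k}{\lam_j}\sin Q_{\lam_j}\,r\,\ud r .
\]
Now the slowly decaying tail of $\sin Q_{\lam_j}$ is multiplied either by $1-\cos(2Q_{\lam_j})\lesssim F_j^2$ or by $|u^*|^3\lesssim F_0^3$. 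So only $|u^*|\lesssim F_0$ is needed. Then \eqref{eq:appendix-8} and \eqref{eq:appendix-9} give $\lam_j(1+|\log\lam_j|)^{1/2}$, with the half logarithm coming only from the size of $F_0$ at $r\simeq\lam_j$. It follows that $|\lam_j'|\lam_j(1+|\log\lam_j|)^{1/2}\lesssim\|\vec g\|_{\E}^2+\lam_N^2(1+|\log\lam_N|)$, as required.

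A weighted Cauchy--Schwarz, $|\La h,\Lam Q_{\U{\lam_j}}\Ra|\le\|h/r\|_{L^2}\|r\Lam Q_{\U{\lam_j}}\|_{L^2(r\le8\rho)}$ with $h=\p_t^2u^*$, would also give the half logarithm. However, it needs uniform-in-time control of $\|r^{-1}\p_t^2u^*\|_{L^2}$, which is extra regularity bookkeeping that the integration by parts avoids.
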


\begin{proof}
    We compute its time derivative.
\begin{align}
    &\frac{\ud }{\ud t}\La\vD E(\vec u^*(t)),\vec g(t)\Ra\\
    =&\La\vD^2 E(u^*(t))J\circ \vD E(\vec u^*(t)),\vec g(t)\Ra\\
    &+\La \vD E(\vec u^*(t)), J\circ \vD E(\vec u(t))-J\circ \vD E(\vec u^*(t))+\sum_{j=1}^N\iota_j\lam_j'(t)\Lam\vec Q_{\U{\lam_j(t)}}  \Ra\\
    =&\La \vD E(\vec u^*(t)),J\circ\left(\vD E(\vec u(t))-\vD E(\vec u^*(t))-\vD^2 E(u^*(t))\vec g(t)\right)\Ra\\
    &+\La \vD E(\vec u^*(t)), \sum_{j=1}^N\iota_j\lam_j'(t)\Lam\vec Q_{\U{\lam_j(t)}}  \Ra\\
    =&- \int_0^\infty (\p_t u^*(t,r)) \frac{k^2}{2r^2} \left[ \sin(2u(t,r)) - \sin(2u^*(t,r)) - 2 g(t,r) \cos(2u^*(t,r)) - \sum_{j=1}^N \iota_j \sin(2Q_{\lam_j(t)}) \right] r \ud r \\
    &- \sum_{j=1}^N \iota_j \lam_j'(t) \int_0^\infty (\p_{tt} u^*(t,r)) \left[ \frac{k}{\lam_j(t)} \sin Q_{\lam_j(t)}(r) \right] r \ud r\\
    =&:\mathrm{I}+\mathrm{II}.
\end{align}

We first estimate the term $\mathrm{I}$. Recall that $\rho>0$ provided by Lemma ~\ref{lem:small-radiation-valid} is independent of time, and we allow implicit constants depending on $\rho$.

We decompose the nonlinear term into four parts:
\begin{align}
    &\sin(2u) - \sin(2u^*) - 2 g \cos(2u^*) - \sum_{j=1}^N \iota_j \sin(2Q_{\lam_j})\\
    =&\sin\left(2\sum_{j=1}^N\iota_jQ_{\lam_j}+2u^*+2g\right) - \sin(2u^*) - 2 g \cos(2u^*) \\
    &- \sum_{j=1}^N \iota_j \sin(2Q_{\lam_j})\\
    =&\left(\sin\left(2\sum_{j=1}^N\iota_jQ_{\lam_j}+2u^* \right)- \sin(2u^*)- \sum_{j=1}^N \iota_j \sin(2Q_{\lam_j})\right)\\
    &+\sin\left(2\sum_{j=1}^N\iota_jQ_{\lam_j}+2u^*\right)(\cos 2g-1)\\
    &+\cos(2u^*)\left(\sin(2g)-2g\right)\\
    &+\left(\cos\left(2\sum_{j=1}^N\iota_jQ_{\lam_j}+2u^*\right)-\cos (2u^*)\right)\sin(2g).
\end{align}

We first estimate the last three terms. For the second term, using ~\eqref{eq:radiation-small-lem}, ~\eqref{eq:radiation-decay-ut-outside}, and ~\eqref{eq:radiation-decay-ut-local}, we have

\begin{align}
    &\left|\int_0^{\infty} (\p_t u^*) \frac{k^2}{2r^2} \left[ \sin\left(2\sum_{j=1}^N\iota_jQ_{\lam_j}+2u^*\right)(\cos 2g-1) \right] r \ud r\right|\\
    =&\left|\int_0^{\infty} (\p_t u^*) \frac{k^2}{r^2} \left[ \sin\left(2\sum_{j=1}^N\iota_jQ_{\lam_j}+2u^*\right)(\sin^2 g) \right] r \ud r\right|\\
    \lesssim& \rho\int_0^{\rho}\frac{g^2}{r^2}r\ud r+\left(\int_{\rho}^{8\rho}|\p_t u^*|^2\udr\right)^{\frac{1}{2}}\left(\int_{\rho}^{8\rho}\frac{g^4}{r^4}\udr\right)^{\frac{1}{2}}\\
    \lesssim& \|\vec g(t)\|_{\E}^2.
\end{align}

We estimate the third term noting that

\begin{align}
    g^2(R)=-2\int_R^{\infty}\frac{g(r)}{r}\p_rg(r)r\ud r\lesssim \|\vec g\|_{\E}^2\ll 1,
\end{align}

which combined with ~\eqref{eq:radiation-small-lem} implies

\begin{align}
    &\left|\int_0^{\infty} (\p_t u^*) \frac{k^2}{2r^2} \left[ \cos(2u^*)\sin(2g)-\cos(2u^*)2g\right] r \ud r\right|\\
    \lesssim&\int_0^{\infty} \left|\p_t u^*\right| \frac{k^2}{2r^2}  \left|\cos(2u^*)\right||g|^3 r \ud r\\
    \lesssim & \rho\int_0^{\rho}\frac{g^3}{r^2}r\ud r+\left(\int_{\rho}^{8\rho}|\p_t u^*|^2\udr\right)^{\frac{1}{2}}\left(\int_{\rho}^{8\rho}\frac{g^6}{r^4}\udr\right)^{\frac{1}{2}}\\
    \lesssim&\|\vec g(t)\|_{\E}^2
\end{align}

For the last term,  we use ~\eqref{eq:radiation-decay-ut-local},~\eqref{eq:appendix-5} on $0<r<\rho$, ~\eqref{eq:radiation-small-lem}, $F_j(t,r)\lesssim \lam_j^k(t)$ on $\rho\leq r<8\rho$ and ~\eqref{eq:radiation-decay-ut-outside} on $r\geq 8\rho$ to obtain
\begin{align}
    &\left|\int_0^{\infty} (\p_t u^*) \frac{k^2}{2r^2} \left[ \left(\cos\left(2\sum_{j=1}^N\iota_jQ_{\lam_j}+2u^*\right)-\cos (2u^*)\right)\sin(2g) \right] r \ud r\right|\\
    =&\left|\int_0^{\infty} (\p_t u^*) \frac{k^2}{2r^2} \left[ (-2 \sin\left(2u^* + \sum \iota_j Q_{\lam_j}\right) \sin\left(\sum \iota_j Q_{\lam_j}\right))\sin(2g) \right] r \ud r\right|\\
    \lesssim& \sum_{j=1}^N \int_0^\infty |\partial_t u^*| \frac{1}{r^2} F_j(t,r) |g(t,r)| r \ud r\\
    \lesssim& \|\vec g(t)\|_{\E}\sum_{j=1}^N  \int_0^\rho \ti F_0(r) \frac{1}{r^2} F_j(t,r) r \ud r + \sum_{j=1}^N\left(\int_{\rho}^{8\rho}|\p_t u^*|^2\udr\right)^{\frac{1}{2}}\left(\int_{\rho}^{8\rho}\frac{1}{r^4}F_j^2g^2\udr\right)^{\frac{1}{2}}\\
    \lesssim& \|\vec g(t)\|_{\E}\sum_{j=1}^N  \int_0^\infty \ti F_0(r) \frac{1}{r^2} F_j(t,r) r \ud r + \sum_{j=1}^N \lam_j^k(t)\left(\int_{\rho}^{8\rho}\frac{1}{r^4}g^2\udr\right)^{\frac{1}{2}}\\
    \lesssim& \lam_N(t)\|\vec g(t)\|_{\E}.
\end{align}

Now we estimate the first term

\begin{align}
    \int_0^{\infty} (\p_t u^*) \frac{k^2}{2r^2} \left[ \sin\left(2\sum_{j=1}^N\iota_jQ_{\lam_j}+2u^* \right)- \sin(2u^*)- \sum_{j=1}^N \iota_j \sin(2Q_{\lam_j})\right] r \ud r.
\end{align}

We use ~\eqref{eq:sin-decouple-1}, ~\eqref{eq:appendix-6}, ~\eqref{eq:appendix-7} on $0<r<\rho$, ~\eqref{eq:radiation-small-lem}, $F_0(r)\leq 1$, $F_j(t,r)\lesssim\lam_j^k(t)$ on $\rho\leq r<8\rho$, and ~\eqref{eq:radiation-decay-ut-outside} on $r\geq 8\rho$ to obtain
\begin{align}
    &\left|\int_0^{\infty} (\p_t u^*(t,r)) \frac{k^2}{2r^2} \left[ \sin\left(2\sum_{j=1}^N\iota_jQ_{\lam_j(t)}+2u^*(t,r) \right)- \sin(2u^*(t,r))- \sum_{j=1}^N \iota_j \sin(2Q_{\lam_j(t)})\right] r \ud r\right|\\
    \lesssim& \sum_{0 \leq i < j \leq N} \int_0^\rho \frac{\ti F_0(r)}{r} F_i(t,r) F_j(t,r) \ud r+\sum_{0 \leq i < j \leq N} \left(\int_{\rho}^{8\rho}|\p_t u^*|^2\udr\right)^{\frac{1}{2}}\left(\int_{\rho}^{8\rho}\frac{1}{r^4}F_i^2F_j^2\udr\right)^{\frac{1}{2}}\\
    \lesssim&   \sum_{j=1}^N \lam_j^2(t) + \sum_{1 \leq i < j \leq N} \left(\frac{\lam_i(t)}{\lam_j(t)}\right)^k \lam_j(t)+\sum_{1 \leq i < j \leq N}\lam_i^k(t)\lam_j^k(t)+\sum_{j=1}^N\lam_j^k(t)\\
    \lesssim&   \sum_{j=1}^N \lam_j^2(t) + \sum_{1 \leq i < j \leq N} \left(\frac{\lam_i(t)}{\lam_j(t)}\right)^k \lam_j(t).
\end{align}

Next we estimate the term $\mathrm{II}$.

By definition we have 
\begin{align}
    (\p_{rr}+\frac{1}{r}\p_r)  \sin Q_{\lam_j}=\frac{k^2}{r^2}\cos\left(2Q_{\lam_j}\right)\sin Q_{\lam_j}.
\end{align}

Since $\vec u^*$ satisfies ~\eqref{WM}, integrating by parts we get
\begin{align}
    &\int_0^\infty (\p_{tt} u^*) \left[ \frac{k}{\lam_j} \sin Q_{\lam_j}(r) \right] r \ud r\\
    =&\int_0^{\infty}u^*(\p_{rr}+\frac{1}{r}\p_r)\left( \frac{k}{\lam_j} \sin Q_{\lam_j}(r) \right) r\ud r -\int_0^\infty \frac{k^2}{2r^2} \sin(2u^*)\frac{k}{\lam_j} \sin Q_{\lam_j}(r)  r \ud r\\
    =&  \int_0^\infty \frac{k^2}{2r^2} \Big[ 2u^* \cos(2Q_{\lam_j}) - \sin(2u^*)   \Big] \left[ \frac{k}{\lam_j} \sin Q_{\lam_j}(r) \right] r \ud r\\
   =&\int_0^\infty \frac{k^2}{2r^2} \Big[   2u^* \cos(2Q_{\lam_j})-2u^* \Big] \left[ \frac{k}{\lam_j} \sin Q_{\lam_j}(r) \right] r \ud r
   +\int_0^\infty \frac{k^2}{2r^2} \Big[ 2u^*- \sin(2u^*)  \Big] \left[ \frac{k}{\lam_j} \sin Q_{\lam_j}(r) \right] r \ud r
\end{align}

For the first term,  ~\eqref{eq:appendix-8} gives
\begin{align}
    &\left|\int_0^\infty \frac{k^2}{2r^2} \Big[  2u^* \cos(2Q_{\lam_j})-2u^* \Big] \left[ \frac{k}{\lam_j} \sin Q_{\lam_j}(r) \right] r \ud r\right|\\ 
    \lesssim&\frac{1}{\lam_j}\int_0^{\infty}\frac{1}{r}F_0(r)F_j^3(t,r)\,\ud r
    \lesssim \lam_j(1+|\log \lam_j|)^{\frac{1}{2}}.
\end{align}

For the second term, since $|u^*|\lesssim\|\vec u^*\|_{\E}\ll 1$,   ~\eqref{eq:appendix-9} yields
\begin{align}
    &\left|\int_0^\infty \frac{k^2}{2r^2} \Big[ 2u^*- \sin(2u^*)  \Big] \left[ \frac{k}{\lam_j} \sin Q_{\lam_j}(r) \right] r \ud r\right|\\
    \lesssim& \int_0^\infty \frac{1}{r^2} |u^*|^3 \left| \frac{1}{\lam_j} \sin Q_{\lam_j}(r) \right| r \ud r\\
    \lesssim&\frac{1}{\lam_j}\int_0^{\infty}\frac{1}{r}F_0^3(r)F_j(t,r)\ud r\lesssim \lam_j.
\end{align}

Therefore
\begin{align}
    |\mathrm{II}|\lesssim \sum_{j=1}^N |\lam_j'(t)| \lam_j(t)(1+|\log \lam_j(t)|)^{\frac{1}{2}}.
\end{align}

Combining these estimates, we obtain
\begin{align}
    \left|\frac{\ud }{\ud t}\La\vD E(\vec u^*(t)),\vec g(t)\Ra\right|\lesssim&  \|\vec g(t)\|_{\E}^2+ \sum_{j=1}^N \lambda_j(t) \|\vec g(t)\|_{\E}+\sum_{j=1}^N \lam_j^2(t) \\&+ \sum_{1 \leq i < j \leq N} \left(\frac{\lam_i(t)}{\lam_j(t)}\right)^k \lam_j(t)+\sum_{j=1}^N |\lam_j'(t)| \lam_j(t)(1+|\log \lam_j|)^{\frac{1}{2}},
\end{align}

which, combined with Lemma~\ref{lem:mod}, shows that
\begin{align}
    \left|\frac{\ud }{\ud t}\La\vD E(\vec u^*(t)),\vec g(t)\Ra\right|\lesssim\bfd^2(t)+\lam_N^2(t)(1+|\log \lam_N(t)|).
\end{align}
\end{proof}

\subsection{Second order term}

We first recall the standard coercivity lemma for pure multi-bubble.
    \begin{lem}[\rm{\cite[Lemma 2.19]{JL-WM}}] \label{lem:D2E-coercive-1}
        Fix $k>1$, $M\in \N$. There exist $\eta,c_0>0$ with the following properties.
Consider the subset of $M$-bubble configurations
$\sum_{j=1}^M\iota_jQ_{\lam_j}$ for
$\vec\iota=(\iota_1,\dots,\iota_M)\in\{-1,1\}^M$, $\vec\lam=(\lam_1,\dots,\lam_M)\in(0,\infty)^M$ such that,
\begin{equation}\label{eq:2.19}
\sum_{j=1}^{M-1}
\left(\frac{\lam_j}{\lam_{j+1}}\right)^k
\leq \eta^2 .
\end{equation}
Let $g\in H$ be such that
\[
0=\left\langle \cZ_{\lambda_j}\mid g\right\rangle
\qquad \text{for } j=1,\ldots,M
\]
for some $\vec{\lam}$ as in \eqref{eq:2.19}. Then,
\[
\left\langle
\vD^2 E_p\bigl(\sum_{j=1}^M\iota_jQ_{\lam_j}\bigr)g
\mid g
\right\rangle
\geq c_0\|g\|_{H}^2 .
\]
    \end{lem}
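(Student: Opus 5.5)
The plan is to argue by contradiction combined with compactness. Write $\mathcal{L}_{\vec\lam} := \vD^2 E_{\bfp}\bigl(\sum_{j=1}^M\iota_jQ_{\lam_j}\bigr) = -\De + \frac{k^2}{r^2}f'\bigl(\sum_j\iota_jQ_{\lam_j}\bigr)$. Suppose the lemma fails. Then there are sequences $\eta_n\to0$, configurations $\vec\lam^{(n)}$ with $\sum_j(\lam^{(n)}_j/\lam^{(n)}_{j+1})^k\le\eta_n^2$, and $g_n\in H$ with $\|g_n\|_H=1$ and $\la\cZ_{\lam_j^{(n)}}\mid g_n\ra=0$, such that $\la\mathcal{L}_{\vec\lam^{(n)}}g_n\mid g_n\ra\le 1/n$. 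By scaling invariance of $H$ we may normalize one scale. The first step is the localization: a smooth partition of unity $\sum_{j=0}^{M}\psi_j^2=1$ adapted to the scales, with $\psi_j$ supported in the annulus $\sqrt{\lam_{j-1}\lam_j}\lesssim r\lesssim\sqrt{\lam_j\lam_{j+1}}$ (with $\lam_0=0$, $\lam_{M+1}=\infty$) and logarithmic cutoffs so that $\sum_j\|\p_r\psi_j\, g\|_{L^2}^2\to0$ uniformly in $\|g\|_H\le1$. Using the standard IMS-type identity, we get
\begin{align}
\la\mathcal{L}_{\vec\lam}g\mid g\ra=\sum_j\la\mathcal{L}_{\vec\lam}(\psi_jg)\mid\psi_jg\ra-\sum_j\|(\p_r\psi_j)g\|_{L^2}^2 .
\end{align}
The logarithmic cutoffs are needed because the error $\|(\p_r\psi_j)g\|_{L^2}$ is controlled only through the Hardy term $\int g^2r^{-2}\udr$. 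With cutoffs spread over a range of $\log r$ of length $|\log\eta|$, this error is $O(|\log\eta|^{-1})$.

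Next, on the support of $\psi_j$ the potential satisfies $f'\bigl(\sum_l\iota_lQ_{\lam_l}\bigr)=f'(Q_{\lam_j})+o(1)$ uniformly as $\eta\to0$, by the decoupling estimate \eqref{eq:decouple} and $F_l\ll1$ away from scale $\lam_l$. We therefore reduce to the single-bubble operator $\mathcal{L}_{\lam_j}=-\De+\frac{k^2}{r^2}f'(Q_{\lam_j})$ acting on $h_j:=\psi_jg$. The single-bubble coercivity then gives $\la\mathcal{L}_{\lam_j}h\mid h\ra\ge c\|h\|_H^2-C\la\cZ_{\U{\lam_j}}\mid h\ra^2$. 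This holds because for $k\ge2$ the kernel of $\mathcal{L}_1$ in $H$ is spanned by $\Lam Q$, and $\mathcal{L}_1$ has no negative spectrum since $\Lam Q>0$ is a positive zero mode. Here we use $\la\cZ\mid\Lam Q\ra\neq0$, and for $k=2$ that $\cZ=\chi\Lam Q\in L^2$. The pieces $j=0$ and $j=M$ live where $f'\approx1$ and are trivially coercive.

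The main obstacle is the orthogonality defect. We need $\la\cZ_{\U{\lam_j}}\mid\psi_jg\ra=\la\cZ_{\U{\lam_j}}\mid(\psi_j-1)g\ra$ to be small. For $k\ge3$ this follows from Cauchy–Schwarz, since $\Lam Q\sim r^{\mp k}$ makes $r\,\cZ$ square-integrable against $r^{-1}\ud r$ on the tails where $\psi_j\ne1$. For $k=2$ it is immediate from the compact support of $\chi$. This gives
\begin{align}
\la\mathcal{L}_{\vec\lam}g\mid g\ra\ge c\sum_j\|\psi_jg\|_H^2-o(1)\ge c'\|g\|_H^2-o(1),
\end{align}
where the last step uses $\sum_j\|\psi_jg\|_H^2=\|g\|_H^2+o(1)$. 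This contradicts $\la\mathcal{L}_{\vec\lam^{(n)}}g_n\mid g_n\ra\le1/n$ with $\|g_n\|_H=1$, and yields $\eta$ and $c_0$. In fact the argument is direct rather than genuinely by contradiction.
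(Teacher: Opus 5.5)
The paper contains no proof of this statement. It is imported from \cite[Lemma 2.19]{JL-WM}, so there is no in-paper argument to match. Your IMS localization is a correct self-contained proof, modulo the details below, and it takes a different route from a common alternative in this literature, in the spirit of the localized coercivity \eqref{eq:coercive} in Lemma~\ref{lem:A}.

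In that alternative one never cuts $g$. For $c$ below the single-bubble coercivity constant and any $c'>0$, there is $R$ such that every $g\in H$ with $\la\cZ_{\U\lam}\mid g\ra=0$ satisfies
\[
(1-c)\int_{R^{-1}\lam}^{R\lam}\Big((\p_rg)^2+\frac{k^2}{r^2}g^2\Big)\,r\,\ud r+\int_0^\infty\frac{k^2}{r^2}\big(f'(Q_\lam)-1\big)g^2\,r\,\ud r\ge-c'\|g\|_H^2 .
\]
This follows softly from the global estimate by weak compactness: lower semicontinuity of the norm on expanding annuli, compactness of the potential term, and persistence of the orthogonality in the weak limit. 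One applies it to the whole $g$ at each $\lam_j$ (the annuli are disjoint for $\eta$ small) and sums. One then uses that $f'(\sum_l\iota_lQ_{\lam_l})-1-\sum_j(f'(Q_{\lam_j})-1)=O(\sum_{i<j}F_iF_j)=o(1)$ uniformly in $r$. The leftover $(c-Mc'-o(1))\|g\|_H^2$ gives the lemma.

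That route uses the orthogonality conditions exactly, so it has no commutator terms, no logarithmic cutoffs and no orthogonality defects; its constants are non-explicit. Your route reduces directly to the global single-bubble estimate and keeps the dependence of $\eta$ on its constant explicit. The price is the logarithmic cutoffs and the defect estimate. Some such device is needed, because $\|g\|_H$ controls $\int g^2r^{-2}\,r\,\ud r$ only at the borderline, so ordinary cutoffs give errors that are bounded but not small. The defect estimate is exactly where the decay of $\cZ$, and hence its truncation for $k=2$, enters.

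A few details should be fixed; none affects the strategy.

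First, the partition should consist of $M$ pieces, one per bubble. As indexed, your $j=0$ piece is degenerate. The outermost piece $r\gtrsim\sqrt{\lam_{M-1}\lam_M}$ contains the scale $\lam_M$, so it is not a region where $f'\approx1$.

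Second, $\la\cZ_{\U\lam}\mid h\ra$ scales like $\lam\|h\|_H$. So the single-bubble estimate must read $\la\mathcal L_\lam h\mid h\ra\ge c\|h\|_H^2-C\lam^{-2}\la\cZ_{\U\lam}\mid h\ra^2$, and the quantity to make small is $\lam_j^{-1}\la\cZ_{\U{\lam_j}}\mid(1-\psi_j)g\ra$. By Cauchy--Schwarz against $\int g^2r^{-2}\,r\,\ud r$, this requires $\int\cZ^2r^3\,\ud r$ to be small over the region where $\psi_j(\lam_j\,\cdot)\neq1$.
\begin{itemize}
\item At infinity this holds for $\Lam Q$ if and only if $k\ge3$, which is why $\cZ$ is truncated when $k=2$.
\item At the origin it holds for every $k$, since $\Lam Q\sim r^k$.
\item In particular, for $k=2$ and $j\ge2$ the inner tail $r\ll\lam_j$ is handled by this vanishing, not by the compact support of $\chi$.
\end{itemize}

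Third, a positive zero mode only gives $\la\mathcal L_1h\mid h\ra\ge0$, with equality exactly on $\mathrm{span}\,\Lam Q$. Since the continuous spectrum reaches $0$, there is no spectral gap. The uniform constant in $H$ comes from the usual compactness argument, using weak continuity on $H$ of $h\mapsto\int\frac{k^2}{r^2}(1-\cos 2Q)h^2\,r\,\ud r$ and of $h\mapsto\la\cZ\mid h\ra$. This is standard and may be quoted.
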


We immediately have

\begin{cor} \label{lem:D2E-coercive}  Fix $k \ge 2$, $M \in \N$. There exist $\eta, c_0,\de>0$ with the following properties. Consider the subset of $M$-bubble configurations $\sum_{j=1}^M\iota_j\vec Q_{\lam_j}$ for $\vec \iota \in \{-1, 1\}^M$, $\vec \lam \in (0, \infty)^M$ and $\vec v\in\E$ such that, 
\begin{align} \label{eq:lam-ratio} 
\sum_{j =1}^{M-1} \Big( \frac{\lam_j}{\lam_{j+1}} \Big)^k \le \eta^2,\\
\|\vec v\|_{\E}<\de.
\end{align}
Let $g \in H$ be such that 
\begin{align}
0 = \ang{\cZ_{\U{\lam_j}} \mid g}  \mfor j = 1, \dots M 
\end{align}
for some $\vec \lam$ as in~\eqref{eq:lam-ratio}. Then, 
\begin{align}
\ang{ \uD^2 E_{\bfp}\left( \sum_{j=1}^M\iota_j Q_{\lam_j}+ v\right) g \mid g} \ge c_0 \| g \|_{H}^2. 
\end{align}
\end{cor}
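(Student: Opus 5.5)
We will treat Corollary~\ref{lem:D2E-coercive} as a perturbation of Lemma~\ref{lem:D2E-coercive-1}. Choose $\eta$ and the constant (call it $2c_0$) given there for the pure multi-bubble $\mathcal Q:=\sum_{j=1}^M\iota_jQ_{\lam_j}$. A minor first point: Lemma~\ref{lem:D2E-coercive-1} is written with the orthogonality $\ang{\cZ_{\lam_j}\mid g}=0$, whereas here we have $\ang{\cZ_{\U{\lam_j}}\mid g}=0$. Since $\cZ_{\U{\lam_j}}=\lam_j^{-1}\cZ_{\lam_j}$, the two conditions coincide. Lemma~\ref{lem:D2E-coercive-1} therefore gives directly
\begin{align}
\ang{\uD^2E_{\bfp}(\mathcal Q)g\mid g}\ge 2c_0\|g\|_H^2 .
\end{align}

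Next we compare the two quadratic forms. Since $\uD^2E_{\bfp}(w)=-\De+\frac{k^2}{r^2}f'(w)$ with $f'(w)=\cos 2w$, we have
\begin{align}
\ang{\uD^2E_{\bfp}(\mathcal Q+v)g\mid g}-\ang{\uD^2E_{\bfp}(\mathcal Q)g\mid g}
=k^2\int_0^\infty\big(\cos(2\mathcal Q+2v)-\cos 2\mathcal Q\big)\frac{g^2}{r^2}\,r\ud r .
\end{align}
The mean value theorem gives the pointwise bound $|\cos(2\mathcal Q+2v)-\cos2\mathcal Q|\le 2|v(r)|\le 2\|v\|_{L^\infty}$. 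By the radial Sobolev (Strauss-type) embedding already used in the paper,
\begin{align}
v^2(R)=-2\int_R^\infty \frac{v}{r}\p_r v\, r\ud r\lesssim\|v\|_H^2,
\end{align}
we get $\|v\|_{L^\infty}\lesssim\|\vec v\|_{\E}<\de$. Hence the difference above is bounded in absolute value by $C k^2\de\int_0^\infty \frac{g^2}{r^2}r\ud r\le C\de\|g\|_H^2$, with $C$ depending only on $k$.

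Choosing $\de$ so small that $C\de\le c_0$ then yields
\begin{align}
\ang{\uD^2E_{\bfp}(\mathcal Q+v)g\mid g}\ge c_0\|g\|_H^2
\end{align}
for all admissible configurations and all such $g$. This bound is uniform in $\vec\lam$ and $\vec\iota$, because neither the $L^\infty$ bound on $v$ nor the Hardy-type norm depends on the scales.

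There is no real obstacle in this argument. The only points requiring care are the uniformity of the constants with respect to the configuration and the harmless renormalization between $\cZ_{\lam_j}$ and $\cZ_{\U{\lam_j}}$ in the orthogonality condition.
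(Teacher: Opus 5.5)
Your proof is correct and matches the paper's argument. The paper also bounds the difference of the two quadratic forms by $\|v\|_H\|g\|_H^2$ (it writes $\cos(2\mathcal Q+2v)-\cos 2\mathcal Q$ as a product of sines instead of using the mean value theorem, with the bound $\|v\|_{L^\infty}\lesssim\|v\|_H$ left implicit) and then invokes Lemma~\ref{lem:D2E-coercive-1}. Your remark that the orthogonality conditions with $\cZ_{\lam_j}$ and with $\cZ_{\U{\lam_j}}$ coincide settles a small notational mismatch that the paper leaves unaddressed.
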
 

\begin{proof}
    We have
    \begin{align}
        &\left|\ang{ \uD^2 E_{\bfp}\left( \sum_{j=1}^M\iota_jQ_{\lam_j}+ v\right) g \mid g} -\ang{ \uD^2 E_{\bfp}\left( \sum_{j=1}^M\iota_j Q_{\lam_j}\right) g \mid g} \right|\\
        =&k^2\left|\int_0^{\infty}\left(\frac{1}{r^2}f'\left(\sum_{j=1}^M\iota_j Q_{\lam_j}+ v \right)-\frac{1}{r^2}f'\left(\sum_{j=1}^M\iota_j Q_{\lam_j} \right)\right)g^2(r)r\,\ud r\right|\\
        =&2k^2\left|\int_0^{\infty}\sin\left(2\sum_{j=1}^M\iota_j Q_{\lam_j}+v\right)\sin\left(v\right)\frac{g^2(r)}{r^2}r\,\ud r\right|\\
        \lesssim&\|v\|_{H}\| g \|_{H}^2,
    \end{align}
    and the rest of the proof follows from Lemma~\ref{lem:D2E-coercive-1}.
\end{proof}

\section{Proof of the main theorem}\label{sec:proof}

In this section we let $F(t):=\La\vD E(\vec u^*(t)),\vec g(t)\Ra$. After a time translation we may assume that Lemma~\ref{lem:mod} and Lemma~\ref{lem:energy-estimate-main} hold for $t\in[0,T_+)$.

Consider 
\begin{align}
    \psi(t)=\sum_{1\leq j\leq N-1, j\in \A} 2^{-j}\xi_j(t)\be_j(t).
\end{align}
We note that a direct analog of this quantity was introduced in \cite{JL-NLW}, motivated by a similar quantity appearing in \cite{DKMActa}.

It follows from ~\eqref{eq:correction-lam}, ~\eqref{eq:correction-lam''}, ~\eqref{eq:beta-upper}, and ~\eqref{eq:beta'} that
\begin{align}\label{eq:psi}
    \left|\frac{\psi(t)}{\lam_N(t)}\right|\lesssim \sum_{1\leq j\leq N-1, j\in \A} \frac{\lam_{j}(t)}{\lam_N(t)}\bfd(t)\lesssim \frac{\lam_{N-1}(t)}{\lam_N(t)}\bfd(t)\lesssim\bfd^{1+\frac{2}{k}}(t)
\end{align}
\begin{align}\label{eq:psi'}
    \psi'(t)\geq& \sum_{j\in \A,1\leq j\leq N-1}2^{-j}\be_j^2(t)+\sum_{j\in \A,1\leq j\leq N-1}2^{-j}\left[\left(-\iota_j\iota_{j+1}\om^2\right)\left(\frac{\lam_{j}(t)}{\lam_{j+1}(t)}\right)^k+\left(\iota_j\iota_{j-1}\om^2\right)\left(\frac{\lam_{j-1}(t)}{\lam_{j}(t)}\right)^k\right]\\
    &-\ti c_0\bfd^2(t)
\end{align}
for some $\ti c_0$ small enough.

We rewrite the second term as
\begin{align}
    &\sum_{j\in \A,1\leq j\leq N-1}2^{-j}\left[\left(-\iota_j\iota_{j+1}\right)\left(\frac{\lam_{j}(t)}{\lam_{j+1}(t)}\right)^k+\left(\iota_j\iota_{j-1}\right)\left(\frac{\lam_{j-1}(t)}{\lam_{j}(t)}\right)^k\right]\\
    =& \sum_{j\in \A,1\leq j\leq N-1}2^{-j-1}\left(\frac{\lam_{j}(t)}{\lam_{j+1}(t)}\right)^k\\
    &+\sum_{j=1}^{N-1} 2^{-j-1}\left(\frac{\lam_{j}(t)}{\lam_{j+1}(t)}\right)^k\left[-\iota_j\iota_{j+1}\mathbbm{1}_{\A}(j)+\iota_j\iota_{j+1}\mathbbm{1}_{\A}(j+1)\right],
\end{align}

which implies that
\begin{align}
    \psi'(t)\gtrsim\sum_{1\leq j\leq N-1,j\in \A}\left(\frac{\lam_{j}(t)}{\lam_{j+1}(t)}\right)^k-\ti c_0\bfd^2(t)
\end{align}
since $ -\iota_j\iota_{j+1}\mathbbm{1}_{\A}(j)+\iota_j\iota_{j+1}\mathbbm{1}_{\A}(j+1)\geq 0$ for $1\leq j\leq N-1$.

Define
\begin{align}\label{eq:phi}
    \phi(t)=\psi(t)-c_1\int_t^{T_+}\lam_N^2(s)\,\ud s-c_2\int_t^{T_+}|F(s)|\,\ud s,
\end{align}
where $c_1>0,c_2>0$ are chosen sufficiently large. Lemma~\ref{lem:energy-estimate-main}, together with the fact that $0<\ti c_0\ll 1$, gives
\begin{align}
    \phi'(t)\gtrsim\sum_{1\leq j\leq N-1,j\in \A}\left(\frac{\lam_{j}(t)}{\lam_{j+1}(t)}\right)^k-\ti c_0\bfd^2(t)+\lam_N^2(t)+|F(t)|\gtrsim \bfd^2(t),
\end{align}

which, using $\lim_{t\to T_+}\phi(t)=0$ and \eqref{eq:psi}, yields
\begin{align}\label{eq:d^2-phi}
    \int_{t}^{T_+}\bfd^2(s)\,\ud s\lesssim -\phi(t)\lesssim \lam_N(t)\bfd^{1+\frac{2}{k}}(t)+c_1\int_t^{T_+}\lam_N^2(s)\,\ud s+c_2\int_t^{T_+}|F(s)|\,\ud s.
\end{align}

From ~\eqref{eq:lam'},  we obtain 
\begin{align}
    \left|(\lam_N^2)'(t)|=2\cdot\lam_N(t)\right|\lam_N'(t)|\lesssim\bfd^2(t).
\end{align}

Therefore we have
\begin{align}
    \int_t^{T_+}\lam_N^2(s)\,\ud s\lesssim\int_{t}^{T_+}\int_{s}^{T_+}\bfd^2(\ta)\,\ud \ta\ud s
    &\lesssim (T_+-t)\int_t^{T_+}\bfd^2(s)\,\ud s.
\end{align}

For $F$,  ~\eqref{eq:F'-control-main} gives
\begin{align}
    \int_t^{T_+}|F(s)|\ud s\lesssim &\int_{t}^{T_+}\int_{s}^{T_+}(\bfd^2(\tau)+ \lam_N^2(\tau)|\log \lam_N(\tau)|)\ud \tau \ud s\\
    \lesssim & (T_+-t)\int_t^{T_+}\bfd^2(s)\,\ud s+(T_+-t)\int_t^{T_+} \lam_N^2(s)|\log \lam_N(s)|\ud s.
\end{align}

Therefore for $t$ close to $T_+$ we have
\begin{align}\label{eq:basic-control-d2}
    \int_{t}^{T_+}\bfd^2(s)\,\ud s\lesssim \lam_N(t)\bfd^{1+\frac{2}{k}}(t)+ (T_+-t)\int_t^{T_+}\lam_N^2(s)|\log \lam_N(s)|\ud s.
\end{align}
After a translation of time we assume that ~\eqref{eq:basic-control-d2} holds for $t\in [0,T_+)$.

Define
\begin{align}
    E=\left\{t\in [0,T_+):\lam_N(t)\bfd^{1+\frac{2}{k}}(t)> (T_+-t)\int_t^{T_+}\lam_N^2(s)(1+|\log \lam_N(s)|)\ud s\right\}.
\end{align}
It follows immediately that $E$ is open and hence measurable.

We define the maximal function
\begin{align}
    \lam_{N,M}(t)=\sup_{t\leq s<T_+}\lam_N(s),
\end{align}
then we have $0\leq -\lam'_{N,M}(t)\leq |\lam_N'(t)|$ for a.e. $t\in [0,T_+)$ since $\lam_N\in C^1$.

Claim: For $t\in [0,T_+)$ we have

\begin{align}\label{eq:control-E}
    \int_{[t,T_+)\cap E}\bfd (s)\ud s\lesssim o_{t\to T_+}(1)\lam_{N,M}(t), 
\end{align}
and for $t\in E^c$,
\begin{align}\label{eq:control-F}
    \int_{[t,T_+)}\bfd^2 (s)\ud s\lesssim (T_+-t)\int_{[t,T_+)}\lam_N^2(s)(1+|\log \lam_N(s)|)\ud s.
\end{align}

\emph{Proof of ~\eqref{eq:control-E}:}

For a.e. $t\in E$ we have
\EQ{\label{eq:E-d-control}
   & -\frac{\ud}{\ud t} \left[ \lam_{N,M}(t)^{\frac{k}{k+2}} \left( \int_t^{T_+} \bfd^2(s) \ud s \right)^{\frac{2}{k+2}} \right] \\
= & \frac{k}{k+2} \lam_{N,M}(t)^{-\frac{2}{k+2}} (-\lam_{N,M}'(t)) \left( \int_t^{T_+} \bfd^2(s) \ud s \right)^{\frac{2}{k+2}} + \frac{2}{k+2} \lam_{N,M}(t)^{\frac{k}{k+2}} \left( \int_t^{T_+} \bfd^2(s) \ud s \right)^{-\frac{k}{k+2}} \bfd^2(t)\\
\gtrsim & \lam_{N,M}(t)^{\frac{k}{k+2}}\left(\lam_{N,M}(t)\bfd^{1+\frac{2}{k}}(t)\right)^{-\frac{k}{k+2}}\bfd^2(t)\\
= & \bfd(t).
}

Since $\lam_{N,M}(t)^{\frac{k}{k+2}} \left( \int_t^{T_+} \bfd^2(s) \ud s\right)^{\frac{2}{k+2}} $ is decreasing in $[0,T_+)$,  ~\eqref{eq:E-d-control} implies, for $t\in  E$, that 
\begin{align}
    \int_{[t,T_+)\cap E}\bfd(s)\ud s\lesssim & \int_{[t,T_+)\cap E} -\frac{\ud}{\ud s} \left[ \lam_{N,M}(s)^{\frac{k}{k+2}} \left( \int_s^{T_+} \bfd^2(\tau) \ud\tau \right)^{\frac{2}{k+2}} \right] \ud s  \\
    \lesssim& \lam_{N,M}(t)^{\frac{k}{k+2}} \left( \int_t^{T_+} \bfd^2(s) \ud s \right)^{\frac{2}{k+2}}\\
    \lesssim&\lam_{N,M}(t)^{\frac{k}{k+2}} \left( \lam_{N,M}(t)\bfd^{1+\frac{2}{k}}(t) \right)^{\frac{2}{k+2}}\\
    =& o_{t\to T_+}(1)\lam_{N,M}(t) .
\end{align}

Now ~\eqref{eq:control-E} is proved for $t\in E$. For a general $t\in [0,T_+)$, we can take $s_n\in E$ such that
$s_n\downarrow \ti s_0:=\inf([t,T_+)\cap E)$. If such a sequence does not exist, we have $[t,T_+)\cap E=\varnothing$ and 
~\eqref{eq:control-E} holds automatically. Then we have
\begin{align}
    \int_{[t,T_+)\cap E}\bfd(s)\ud s= \lim_{n\to \infty}\int_{[s_n,T_+)\cap E}\bfd(s)\ud s.
\end{align}

Moreover, 
\begin{align}
    \int_{[s_n,T_+)\cap E}\bfd(s)\ud s\lesssim \lam_{N,M}(s_n)\bfd^{\frac{2}{k}}(s_n).
\end{align}
Passing to the limit gives
\begin{align}\int_{[t,T_+)\cap E}\bfd(s)\ud s= \lim_{n\to \infty}\int_{[s_n,T_+)\cap E}\bfd(s)\ud s\lesssim \lam_{N,M}(\ti s_0)\bfd^{\frac{2}{k}}(\ti s_0)\leq o_{t\to T_+}(1)\lam_{N,M}(t).
\end{align}
Hence, \eqref{eq:control-E} holds for all $t\in [0,T_+)$.

\emph{Proof of ~\eqref{eq:control-F}:} This follows immediately from ~\eqref{eq:basic-control-d2} and the definition of $E$.

Now we use ~\eqref{eq:control-E} and ~\eqref{eq:control-F} to derive a contradiction. 

We define
\begin{align}
    t_n:=\inf \left\{t\in[0,T_+):\lam_{N,M}(t)\leq \frac{1}{2^n}\right\}.
\end{align}
Then for $n$ sufficiently large $t_n\in (0,T_+)$. We also have that $t_n$ is increasing to $T_+$ as $n\to \infty$.

On the time interval $[t_n,t_{n+1}]$, ~\eqref{eq:control-E}  gives 
\begin{align}
    \frac{1}{2^{n+1}}\leq & \int_{t_n}^{t_{n+1}}-\lam_{N,M}'(t)\ud t\\
    \leq & \int_{[t_n,t_{n+1}]\cap E}-\lam_{N,M}'(t)\ud t+\int_{[t_n,t_{n+1}]\cap E^c}-\lam_{N,M}'(t)\ud t\\
    \lesssim & \int_{[t_n,t_{n+1}]\cap E}\bfd(t)\ud t+\int_{[t_n,t_{n+1}]\cap E^c}\bfd(t)\ud t\\
    \leq & o_{n\to \infty}(1) \lam_{N,M}(t_n)+\int_{[t_n,t_{n+1}]\cap E^c}\bfd(t)\ud t,
\end{align}
where the passage from the second line to the third follows from the estimate $ -\lam_{N,M}'(t)\lesssim \bfd(t) $ with an implicit constant independent of $n$. This estimate is an immediate consequence of the definition of $\lam_{N,M}(t)$, ~\eqref{eq:lam'}, and the definition of $\bfd(t)$. 

Since $\lam_{N,M}(t_n)=\frac{1}{2^n}$, we have for $n$ sufficiently large,
\begin{align}\label{eq:tn-d}
    \frac{1}{2^{n+2}}\leq \int_{[t_n,t_{n+1}]\cap E^c}\bfd(t)\ud t.
\end{align}

Define $\tau_n=\inf\{t:t\in[t_n,t_{n+1}]\cap E^c\}$. By ~\eqref{eq:tn-d}, and since $E$ is open, we know that $\tau_n\in [t_n,t_{n+1}]\cap E^c$ is well-defined. Combining ~\eqref{eq:control-F} with the fact that $x^2|\log x|$ is increasing when $0<x\ll 1$, we obtain
\begin{align}
    \int_{[t_n,t_{n+1}]\cap E^c}\bfd(t)\ud t\lesssim & \left(t_{n+1}-t_n\right)^{\frac{1}{2}}\left(\int_{[t_n,t_{n+1}]\cap E^c}\bfd^2(t)\ud t\right)^{\frac{1}{2}}\\
    \lesssim & \left(t_{n+1}-t_n\right)^{\frac{1}{2}}\left(\int_{[\tau_n,T_+)}\bfd^2(t)\ud t\right)^{\frac{1}{2}}\\
    \lesssim  & \left(t_{n+1}-t_n\right)^{\frac{1}{2}}(T_+-\ta_n)^{\frac{1}{2}}\left(\int_{[\tau_n,T_+)}\lam_N^2(s)(1+|\log \lam_N(s)|)\ud s\right)^{\frac{1}{2}}\\
    \lesssim & \left(t_{n+1}-t_n\right)^{\frac{1}{2}}(T_+-t_n)^{\frac{1}{2}}\left(\int_{[t_n,T_+)}\lam_N^2(s)(1+|\log \lam_N(s)|)\ud s\right)^{\frac{1}{2}}\\
    \lesssim & (t_{n+1}-t_n)^{\frac{1}{2}}(T_+-t_n)\lam_{N,M}(t_n)(|\log \lam_{N,M}(t_n)|)^{\frac{1}{2}}\\
    =& (t_{n+1}-t_n)^{\frac{1}{2}}(T_+-t_n) \frac{1}{2^n}n^{\frac{1}{2}}
\end{align}

Thus,
\begin{align}\label{eq:tn-tn+1}
    \frac{1}{2^n}\lesssim (t_{n+1}-t_n)^{\frac{1}{2}}(T_+-t_n) \frac{1}{2^n}n^{\frac{1}{2}}.
\end{align}

Setting $S_n=T_+-t_n$, we can rearrange ~\eqref{eq:tn-tn+1} into
\begin{align}
    \frac{1}{n}\lesssim (S_{n}-S_{n+1})S_n^2.
\end{align}

Taking the sum from $n=n_0\gg 1$ to $\infty$, we have
\begin{align}
    \infty=\sum_{n\geq n_0}\frac{1}{n}\lesssim \sum_{n\geq n_0}(S_{n}-S_{n+1})S_n^2\leq \sum_{n\geq n_0}(S_n^3-S_{n+1}^3)=S_{n_0}^3<\infty,
\end{align}
which leads to a contradiction. Therefore, we have proved Theorem~\ref{thm:main}.

\appendix

\section{Estimates for equivariant functions}
\label{sec:appendix-f}

In this section we prove ~\eqref{eq:f-k-1} and ~\eqref{eq:f-k-2}. We will omit the time variable $t$ for convenience and denote $f'(r)=\p_r f(r)$. It is easy to check that all implicit constants in this section are independent of $t$.

In the rest of this section we let $(x,y)=re^{i\te}\in \R^2$; a basic calculation yields
    \begin{align}
        \p_x=\cos \te \p_r-\frac{\sin \te}{r}\p_{\te},\ \p_y=\sin \te\p_r+\frac{\cos \te}{r}\p_{\te}.
    \end{align}
    We let $D_{+}=\p_x+i\p_y=e^{i\te}\left(\p_r+\frac{i}{r}\p_{\te}\right)$ and $D_-=\p_x-i\p_y=e^{-i\te}\left(\p_r-\frac{i}{r}\p_{\te}\right)$.

    For a general function $G(r,\te)=g(r)e^{im\te}$ with $m\in \Z$, we have
    \begin{align}
        D_+ G(r,\te)=\left(g'(r)-m\frac{g(r)}{r}\right)e^{i(m+1)\te},\ D_- G(r,\te)=\left(g'(r)+m\frac{g(r)}{r}\right)e^{i(m-1)\te},
    \end{align}
    \begin{align}\label{eq:nabla}
        |\na G|^2(r,\te)=|g'(r)|^2+\frac{m^2}{r^2}|g(r)|^2,
    \end{align}
    and
    \begin{align}
\label{eq:Hessian}|\na^2G|^2(r,\te)=|g''(r)|^2+2m^2\left|\p_r\left(\frac{g(r)}{r}\right)\right|^2+\left|\frac{g'(r)}{r}-m^2\frac{g(r)}{r^2}\right|^2.
    \end{align}
    The first equality can be computed directly. For the last equality, we consider the orthogonal frame $\{\p_r,\frac{1}{r}\p_{\te}\}$ and compute using 
    \begin{align}
        |\na^2G|^2(r,\te)=\left|\na^2 G(\p_r,\p_r)\right|^2(r,\te)+2\left|\na^2 G(\p_r,\frac{1}{r}\p_{\te})\right|^2(r,\te)+\left|\na^2 G(\frac{1}{r}\p_{\te},\frac{1}{r}\p_{\te})\right|^2(r,\te),
    \end{align}
    where $\na^2$ is the Hessian defined in $\R^2$ with Euclidean metric. Combining ~\eqref{eq:nabla} with ~\eqref{eq:Hessian} we get for $0<r_0\leq \infty$,
    \begin{align}\label{eq:hardy-angle}
        m^2\int_0^{r_0}\frac{|g(r)|^2}{r^2}r\ud r+m^2\int_0^{r_0} \left|\p_r\left(\frac{g(r)}{r}\right)\right|^2 r\ud r\lesssim\|g(r)e^{im\te}\|_{H^2(B(0,r_0))}^2
    \end{align}

    We first consider ~\eqref{eq:f-k-1}. Using ~\eqref{eq:hardy-angle} with $r_0=1$ for $G(r)=D_+\left(f(r)e^{ik\te}\right)=\left(f'(r)-\frac{k}{r}f(r)\right)e^{i(k+1)\te}$ and $G(r)= D_-\left(f(r)e^{ik\te}\right)=\left(f'(r)+\frac{k}{r}f(r)\right)e^{i(k-1)\te}$ respectively, we obtain
    \begin{align}
        &\int_0^{1}\frac{|f'(r)\mp\frac{k}{r}f(r)|^2}{r^2}r\ud r+(k\pm1)^2\int_0^{1} \left|\p_r\left(\frac{f'(r)\mp\frac{k}{r}f(r)}{r}\right)\right|^2 r\ud r\\
    \lesssim&\|D_{\pm}\left(f(r)e^{ik\te}\right)\|_{H^2(B(0,1))}^2\\
    \lesssim &\|f(r)e^{ik\te} \|_{H^3(B(0,1))}^2.
    \end{align}
    
    Then ~\eqref{eq:f-k-1} follows from $k\geq 2$ and
    \begin{align}
        \frac{k}{r^2}f(r)= \frac{1}{2}\left[\left( \frac{f'(r)}{r}+ \frac{k}{r^2}f(r) \right)- \left( \frac{f'(r)}{r}- \frac{k}{r^2}f(r) \right)\right].
    \end{align}

    Next we consider ~\eqref{eq:f-k-2}. We let $F(x,y)=f(r)e^{ik\te}$. A direct computation yields
    \begin{align}
        \frac{f(r)}{r^3}e^{ik\theta}
=&\frac{1}{ik(k^2-1)(k^2-4)r^3}
\Big[
\big((k^2-1)y^3-3x^2y-3ikxy^2\big)\partial_x^3F \\
&\quad
+3\big(x^3-(k^2+1)xy^2+ik\,y(2x^2-y^2)\big)
\partial_x^2\partial_yF \\
&\quad
+3\big((k^2+1)x^2y-y^3-ik\,x(x^2-2y^2)\big)
\partial_x\partial_y^2F \\
&\quad
+\big(3xy^2-3ikx^2y-(k^2-1)x^3\big)\partial_y^3F
\Big].
    \end{align}

    This combined with $\frac{|x|}{r},\frac{|y|}{r}\leq 1$ and $k\geq 3$ yields
    \begin{align}
        \int_0^1\left|\frac{f(r)}{r^3}\right|^2r\ud r\lesssim \|f(r)e^{ik\te}\|_{H^3(B(0,1))}^2\lesssim1.
    \end{align}
    Hence ~\eqref{eq:f-k-2} is proved.

\section{Estimates on integrals for $F_j$}
\label{sec:appendix-Fj}

In this section we prove some estimates that appeared in Section~\ref{Sec:4}. For convenience we omit the time variable $t$ and note that our estimates are uniform in $t$. By taking $t$ close enough to $T_+$, we may assume that
\begin{align}
    0<\lam_1<\cdots<\lam_j<\cdots<\lam_N\ll1.
\end{align}

\begin{lem}\label{lem:appendix}

    For $1\leq i<j\leq N$, we have

    \begin{align}\label{eq:appendix-1}
        \int_0^{\infty}F_0^2(r)F_j(r)\frac{\ud r}{r}\lesssim \lam_j^2.
    \end{align}

    \begin{align}\label{eq:appendix-2}
        \int_0^{\infty}F_0^2(r)F_i(r)F_j(r)\frac{\ud r}{r}\lesssim \begin{cases}
\lambda_i^2 \lambda_j^2 (1 + |\log \lambda_j|)^2, & k = 2, \\
\lambda_j^4 \left( \frac{\lambda_i}{\lambda_j} \right)^k, & k \ge 3.
\end{cases},
    \end{align}

    \begin{align}\label{eq:appendix-3}
        \int_0^{\infty}F_0^2(r)F_j^2(r)\frac{\ud r}{r}\lesssim \begin{cases} \lambda_j^4 (1 + |\log \lambda_j|)^2, & k = 2, \\ \lam_j^4,& k\geq 3 .\end{cases}
    \end{align}

    \begin{align}\label{eq:appendix-4}
        \int_0^{\infty}F_0(r)F_i(r)F_j(r)\frac{\ud r}{r}\lesssim \begin{cases}
\lambda_i^2 (1 + |\log \lambda_j|)^{\frac{1}{2}}, & k = 2, \\
\lambda_j^2 \left( \frac{\lambda_i}{\lambda_j} \right)^k, & k \ge 3.
\end{cases}
    \end{align}

    \begin{align}\label{eq:appendix-5}
        \int_0^{\infty}\ti F_0(r)F_j(r)\frac{\ud r}{r}\lesssim \lambda_j.
    \end{align}

    \begin{align}\label{eq:appendix-6}
        \int_0^{\infty}\ti F_0(r)F_0(r)F_j(r)\frac{\ud r}{r}\lesssim \lam_j^2.
    \end{align}

    \begin{align}\label{eq:appendix-7}
        \int_0^{\infty}\ti F_0(r)F_i(r)F_j(r)\frac{\ud r}{r}\lesssim \lambda_j \left( \frac{\lambda_i}{\lambda_j} \right)^k.
    \end{align}

    \begin{align}\label{eq:appendix-8}
        \int_0^{\infty}F_0(r)F_j^3(r)\frac{\ud r}{r}\lesssim\begin{cases}
\lambda_j^2 (1 + |\log \lambda_j|)^{\frac{1}{2}}, & k = 2, \\
\lambda_j^2, & k \ge 3.
\end{cases}.
    \end{align}
    
    \begin{align}\label{eq:appendix-9}
        \int_0^{\infty}F_0^3(r)F_j(r)\frac{\ud r}{r}\lesssim \lam_j^2.
    \end{align}

\end{lem}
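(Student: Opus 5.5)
The plan is to prove each estimate of Lemma~\ref{lem:appendix} by splitting $(0,\infty)$ into the dyadic-type regions on which every factor is given by a single power, and integrating elementary monomials against $\frac{\ud r}{r}$. Throughout, $0<\lam_1<\dots<\lam_N\ll1$. The natural breakpoints are $r=\lam_i$, $r=\lam_j$ and $r=1$. On these pieces we have
\begin{align}
F_j(r)=(r/\lam_j)^k \ \text{for } r\le\lam_j, \qquad F_j(r)=(\lam_j/r)^k \ \text{for } r\ge\lam_j,
\end{align}
together with $F_0(r)=r^2\ell(r)$ and $\ti F_0(r)=r$ for $r\le1$, where $\ell(r)=1+|\log r|^{1/2}$ if $k=2$ and $\ell\equiv1$ if $k\ge3$. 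For $r\ge1$ we have $F_0=\ti F_0=1$ and $F_j\le\lam_j^k$.

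On each region the integrand is then $r^{a}\ell(r)^b$ times constants. The integral is dominated by an endpoint, unless $a=0$, in which case a logarithm appears.

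First I dispose of the tail $r\ge1$. There every integral is bounded by $\lam_j^k$ or $\lam_i^k\lam_j^k$ times $\int_1^\infty r^{-k}\frac{\ud r}{r}$. This is smaller than the claimed right-hand sides: since $k\ge2$, we have $\lam_j^k\le\lam_j^2$ and $\lam_j^k\lesssim\lam_j$, and for the mixed terms $\lam_i^k\lam_j^k\le\lam_j^{m}(\lam_i/\lam_j)^k$ for the relevant powers $m\le 2k$.

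Then I treat $r\le1$ case by case. As a model, take \eqref{eq:appendix-1} for $k=2$. On $r<\lam_j$ the integral is $\int r^4\ell^2 (r/\lam_j)^2\frac{\ud r}{r}\lesssim\lam_j^4\ell(\lam_j)^2$. On $\lam_j<r<1$ it is $\int r^4\ell^2(\lam_j/r)^2\frac{\ud r}{r}\lesssim\lam_j^2$, since the power $r^{2}$ makes the integral converge at $r=1$. The total is $\lesssim\lam_j^2$. The other single-bubble estimates \eqref{eq:appendix-3}, \eqref{eq:appendix-5}, \eqref{eq:appendix-6}, \eqref{eq:appendix-8} and \eqref{eq:appendix-9} follow the same way. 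The power of the logarithm arises exactly when the exponent on $(\lam_j,1)$ vanishes. For example, in \eqref{eq:appendix-3} with $k=2$ we get $\lam_j^4\int_{\lam_j}^1\ell^2\frac{\ud r}{r}\lesssim\lam_j^4(1+|\log\lam_j|)^2$. In \eqref{eq:appendix-8} with $k=2$ we have $F_j^3\sim(\lam_j/r)^6$ above $\lam_j$, so the peak near $r\sim\lam_j$ gives $\lam_j^2\ell(\lam_j)$.

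For the two-bubble estimates \eqref{eq:appendix-2}, \eqref{eq:appendix-4} and \eqref{eq:appendix-7}, I use three regions $r<\lam_i$, $\lam_i<r<\lam_j$ and $\lam_j<r<1$. On the middle region $F_iF_j=(\lam_i/r)^k(r/\lam_j)^k=(\lam_i/\lam_j)^k$ is constant, so the integral reduces to $(\lam_i/\lam_j)^k\int_{\lam_i}^{\lam_j}F_0^{p}\ti F_0^{q}\frac{\ud r}{r}$. This is bounded by the value at the upper endpoint: $\lam_j^2\ell(\lam_j)$ for $F_0$, $\lam_j^4\ell^2$ for $F_0^2$, and $\lam_j$ for $\ti F_0$. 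For $k=2$ this rewrites as $\lam_i^2\ell(\lam_j)$ and $\lam_i^2\lam_j^2\ell(\lam_j)^2$, which matches the stated forms. The outer region gives $\lam_i^k\lam_j^k\int_{\lam_j}^1 r^{-2k}F_0^p\frac{\ud r}{r}$. For $k\ge3$ this is dominated by $r=\lam_j$, giving $(\lam_i/\lam_j)^k\lam_j^{2p}$. For $k=2$ and $p=2$ the exponent is $0$ and we get $\lam_i^2\lam_j^2\int_{\lam_j}^1\ell^2\frac{\ud r}{r}\lesssim\lam_i^2\lam_j^2(1+|\log\lam_j|)^2$, which is exactly the claim. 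The inner region $r<\lam_i$ is bounded similarly by its endpoint value, which is dominated by the middle-region bound.

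The main (mild) obstacle is bookkeeping the logarithmic factors for $k=2$ in the borderline cases where an exponent vanishes. In particular, in \eqref{eq:appendix-4} with $k=2$, the outer region has integrand $\lam_i^2\lam_j^2 r^{-2}\ell(r)\frac{1}{r}$, which is still integrable at $r=1$. Its contribution is therefore $\lam_i^2\ell(\lam_j)$, so only a $\tfrac12$-power of the logarithm appears, as stated. Everything else is routine monomial integration.
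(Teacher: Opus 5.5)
Your argument is correct and is essentially the paper's proof: the paper also splits $(0,\infty)$ at $\lam_i,\lam_j,1$ and integrates the resulting monomials against $\frac{\ud r}{r}$ using the elementary bounds \eqref{eq:B-1}--\eqref{eq:B-4}, writing out \eqref{eq:appendix-4} in detail and leaving the others as similar. One small bookkeeping fix is needed in your tail estimate on $r\ge1$: keep the full power of $F_j$ there, since for \eqref{eq:appendix-3} the bound $\lam_j^k$ is not $\lesssim\lam_j^4$ when $k\in\{2,3\}$, whereas the actual tail $\int_1^\infty F_j^2\frac{\ud r}{r}\lesssim\lam_j^{2k}\le\lam_j^4$ is fine.
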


\begin{proof}

    We record four fundamental inequalities, from which ~\eqref{eq:appendix-1}--~\eqref{eq:appendix-9} can be obtained easily.  For $0<\lam<1$, $m>0$ and $\al\geq 0$,
    \begin{align}\label{eq:B-1}
        \int_0^\lam r^{m-1}(1+|\log r|)^\al\ud r \lesssim \lam^m(1+|\log \lam|)^\al,
    \end{align}
    \begin{align}\label{eq:B-2}
        \int_\lam^1 r^{-m-1}(1+|\log r|)^\al \ud r \lesssim \lam^{-m}(1+|\log \lam|)^\al.
    \end{align}
    \begin{align}\label{eq:B-3}
        \int_\lam^1 r^{-1}(1+|\log r|)^\al \ud r \lesssim (1+|\log \lam|)^{\al+1}.
    \end{align}
    \begin{align}\label{eq:B-4}
        \int_\lam^1 r^{m-1}(1+|\log r|)^\al\,\ud r
    &\lesssim 1.
    \end{align}
    The proof of ~\eqref{eq:B-1}, ~\eqref{eq:B-2}, ~\eqref{eq:B-3} and ~\eqref{eq:B-4} is easy:
    \begin{align}
        \int_0^{\lam} r^{m-1}(1+|\log r|)^\al\ud r  =& \lam^m \int_0^1 t^{m-1} (1 - \log \lam - \log t)^\al \ud t\\
        \lesssim & \lam^m\int_0^1 t^{m-1}\left( (1-\log \lam)^{\al}+|\log t|^{\al} \right)\ud t\\
        \lesssim & \lam^m(1+|\log \lam|)^\al,
    \end{align}
    \begin{align}
        \int_{\lam}^1 r^{-m-1}(1+|\log r|)^\al \ud r = & \lam^{-m} \int_1^{\frac{1}{\lam}} t^{-m-1} (1 - \log \lam - \log t)^\al \ud t\\
        \leq & \lam^{-m} \int_1^{\frac{1}{\lam}} t^{-m-1}(1-\log \lam)^{\al} \ud t \\
        \lesssim & \lam^{-m}(1+|\log \lam|)^\al,
    \end{align}
    \begin{align}
        \int_{\lam}^1 r^{-1}(1+|\log r|)^\al \ud r = & \ \int_1^{\frac{1}{\lam}}r^{-1}(1-\log r-\log \lam )^{\al} \ud r\\
        \leq & \ \int_1^{\frac{1}{\lam}}r^{-1}(1-\log \lam)^{\al} \ud r\\
        =& (1-\log \lam)^{\al}\log\frac{1}{\lam}\\
        \leq & (1+|\log \lam|)^{\al+1}.
    \end{align}
    \begin{align}
        \int_{\lam}^1 r^{m-1}(1+|\log r|)^\al\,\ud r
    \lesssim  \int_0^1 r^{m-1}(1+|\log r|)^\al\,\ud r \lesssim 1.
    \end{align}

    We note that the implicit constants in ~\eqref{eq:B-1} and ~\eqref{eq:B-2}
depend on $m$ and $\alpha$. However, for each fixed $k$, only finitely many
pairs $(m,\al)$ occur in the estimates below. Hence, after enlarging the
constant if necessary, these inequalities can be applied uniformly throughout
the proof, with constants depending only on $k$.

    We will prove ~\eqref{eq:appendix-4}, the proofs of the remaining inequalities are very similar by splitting intervals. 
    
    When $k=2$, applying \eqref{eq:B-1} with
$(m,\al)=(6,\frac{1}{2})$ for the integral on $(0,\lam_i)$,
\eqref{eq:B-1} with $(m,\al)=(2,\frac{1}{2})$ for the integral on
$(\lam_i,\lam_j)$ after extending it to $(0,\lam_j)$, and
\eqref{eq:B-2} with $(m,\al)=(2,\frac{1}{2})$ , we obtain
    \begin{align}
        \int_0^{\infty}F_0(r)F_i(r)F_j(r)\frac{\ud r}{r}\lesssim &\ \int_0^{\lam_i}r^2(1+|\log r|)^{\frac{1}{2}}\left(\frac{r}{\lam_i}\right)^2\left(\frac{r}{\lam_j}\right)^2\frac{\ud r}{r} \\
        & +\int_{\lam_i}^{\lam_j}r^2(1+|\log r|)^{\frac{1}{2}}\left(\frac{\lam_i}{r}\right)^2\left(\frac{r}{\lam_j}\right)^2\frac{\ud r}{r}\\
        & + \int_{\lam_j}^1 r^2(1+|\log r|)^{\frac{1}{2}}\left(\frac{\lam_i}{r}\right)^2\left(\frac{\lam_j}{r}\right)^2\frac{\ud r}{r}\\
        & + \int_1^{\infty} \left(\frac{\lam_i}{r}\right)^2\left(\frac{\lam_j}{r}\right)^2\frac{\ud r}{r}\\
        \lesssim & \frac{1}{\lam_i^2\lam_j^2}\lam_i^6(1+|\log \lam_i|)^{\frac{1}{2}}+\int_{0}^{\lam_j}r^2(1+|\log r|)^{\frac{1}{2}}\left(\frac{\lam_i}{r}\right)^2\left(\frac{r}{\lam_j}\right)^2\frac{\ud r}{r}\\
        &+\lam_i^2\lam_j^2 \lam_j^{-2}(1+|\log\lam_j|)^{\frac{1}{2}}+\lam_i^2\lam_j^2\\
        \lesssim &  \frac{\lam_i^2}{\lam_j^2}\lam_i^2(1+|\log\lam_i|)^{\frac{1}{2}}+\frac{\lam_i^2}{\lam_j^2}\lam_j^2(1+|\log\lam_j|)^{\frac{1}{2}}\\
        &+\lam_i^2\lam_j^2 \lam_j^{-2}(1+|\log\lam_j|)^{\frac{1}{2}}+\lam_i^2\lam_j^2\\
        \lesssim & \lam_i^2(1+|\log\lam_j|)^{\frac{1}{2}}.
    \end{align}
    In the last line we use the fact that $x^2|\log x|$ is increasing in $(0,e^{-\frac{1}{2}})$.
    
    When $k\geq3$ we have
    \begin{align}
        \int_0^{\infty}F_0(r)F_i(r)F_j(r)\frac{\ud r}{r}\lesssim &\ \int_0^{\lam_i}r^2\left(\frac{r}{\lam_i}\right)^k\left(\frac{r}{\lam_j}\right)^k\frac{\ud r}{r}  +\int_{\lam_i}^{\lam_j}r^2\left(\frac{\lam_i}{r}\right)^k\left(\frac{r}{\lam_j}\right)^k\frac{\ud r}{r}\\
        & + \int_{\lam_j}^1 r^2\left(\frac{\lam_i}{r}\right)^k\left(\frac{\lam_j}{r}\right)^k\frac{\ud r}{r}+ \int_1^{\infty} \left(\frac{\lam_i}{r}\right)^k\left(\frac{\lam_j}{r}\right)^k\frac{\ud r}{r}\\
        \lesssim &\ \lam_j^2\left(\frac{\lam_i}{\lam_j} \right)^k.
    \end{align}

\end{proof}

\bibliographystyle{plain}
\bibliography{WM}

\end{document}